\def\ds{\displaystyle}
\def\O{\Omega}
\def\l{\lambda}
\def\t{\theta}
\renewcommand\sp{\mathop{\mathrm{Sp}}\nolimits}
\newtheorem{remark}{Remark}[section]
\newtheorem{lemma}{Lemma}[section]
\newtheorem{theorem}{Theorem}[section]
\newtheorem{prop}{Proposition}[section]
\newcommand{\set}[1]{\lbrace #1 \rbrace}
\newcommand{\norm}[1]{\lVert#1\rVert}
\newcommand{\n}{\boldsymbol{n}}
\newcommand\bu{\boldsymbol{u}}
\newcommand\bv{\boldsymbol{v}}
\newcommand\bw{\boldsymbol{w}}
\newcommand\bx{\boldsymbol{x}}
\newcommand\bn{\boldsymbol{n}}
\newcommand\mE{\boldsymbol{\mathcal{E}}}
\newcommand\bF{\boldsymbol{f}}
\newcommand\bS{\boldsymbol{S}}
\newcommand\bT{\boldsymbol{T}}
\newcommand\bP{\boldsymbol{P}}
\newcommand\0{\mathbf{0}}
\newcommand\bxi{\boldsymbol{\xi}}
\def\CT{{\mathcal T}}
\newcommand\bcW{\mathbb{W}}
\newcommand\bcK{\boldsymbol{\mathcal{K}}}
\newcommand\bcI{\boldsymbol{\mathcal{I}}}
\newcommand\wbsig{\widetilde{\bsig}}
\newcommand\bsig{\boldsymbol{\sigma}}
\newcommand\btau{\boldsymbol{\tau}}
\newcommand\R{\mathbb{R}}
\renewcommand\H{\mathrm{H}}
\renewcommand\L{\mathrm{L}}
\renewcommand\O{\Omega}
\newcommand\DO{\partial\O}
\newcommand\bdiv{\mathop{\mathbf{div}}\nolimits}
\renewcommand\div{\mathop{\mathrm{div}}\nolimits}
\newcommand\rot{\mathop{\mathrm{rot}}\nolimits}
\newcommand\brot{\mathop{\mathbf{rot}}\nolimits}
\newcommand\tD{\mathtt{D}}
\newcommand\tr{\mathop{\mathrm{tr}}\nolimits}
\renewcommand\sp{\mathop{\mathrm{sp}}\nolimits}
\renewcommand\t{\mathtt{t}}
\newcommand\LO{\L^2(\O)}
\newcommand\HsO{\H^s(\O)}
\renewcommand\t{\mathtt{t}}
\newcommand\gap{\widehat{\delta}}
\begin{document}
\title[VEM approximation for the Stokes eigenvalue problem]
{A virtual element approximation for the pseudostress
formulation of the Stokes eigenvalue problem}

%\author{Ernesto C\'aceres}
%\address{Division of Applied Mathematics, Brown University, Providence, RI 02912, USA.}
%\email{ernesto\_caceres\_valenzuela@brown.edu}
%\thanks{The first  author was partially supported by Becas-CONICYT Programme for foreign students.}
\author{Felipe Lepe}
\address{Departamento de Matem\'atica, Universidad del B\'io-B\'io, Casilla 5-C, Concepci\'on, Chile.}
\email{flepe@ubiobio.cl}
\thanks{The first author was partially supported by
CONICYT-Chile through FONDECYT Postdoctorado project 3190204 (Chile).}

\author{Gonzalo Rivera}
\address{Departamento de Ciencias Exactas, Universidad de Los Lagos,
Casilla 933, Osorno, Chile.}
\email{gonzalo.rivera@ulagos.cl}  
\thanks{The second author was supported by
CONICYT-Chile through FONDECYT project 11170534 (Chile).}

%\author{XXX}
%\address{XXX,
%XXX.}
%\email{XXX@XXX.XXX}  
%\thanks{The third author was supported by
%XXXX}

%\author{Iv\'an Vel\'asquez}
%\address{CI$^{\mathrm{2}}$MA, Departamento de Ingenier\'{\i}a Matem\'atica,  Universidad de
%Concepci\'on, Casilla 160-C, Concepci\'on, Chile.}
%\email{ivelasquez@ci2ma.udec.cl}
%\thanks{The fourth author was partially supported by BASAL project CMM,
%Universidad de Chile (Chile).
%The fourth author was partially supported by CONICYT-Chile through
%the project AFB170001 of the PIA Program:
%Concurso Apoyo a Centros Cient\'ificos y Tecnol\'ogicos
%de Excelencia con Financiamiento Basal}

\subjclass[2000]{Primary 65N12, 65N15, 65N25, 65N30, 35Q35, 76D07}

\keywords{Spectral problems, Stokes eigenvalue problem,
virtual element method, error estimates.}
\begin{abstract}
In this paper we analyze a virtual element method (VEM) for a pseudostress formulation
of the Stokes eigenvalue problem.  This formulation allows to eliminate
the velocity and the pressure, leading to an elliptic formulation where the only unknown is the pseudostress tensor. The velocity and pressure can be recovered by a post-process. Adapting the non-compact operator theory, we prove that our method provides a correct approximation of the spectrum and is spurious free. We prove a priori error estimates, with optimal order,  which we confirm with some numerical tests.

\end{abstract}
\maketitle
\section{Introduction}\label{section:1}
Let $\O\subset\mathbb{R}^2$ be an open bounded domain with
Lipschitz boundary $\partial\O$. We assume that this boundary is 
splitted in two parts $\Gamma_D$ and $\Gamma_N$ such that
$\partial\O:=\Gamma_D\cup\Gamma_N$. We are interested
in the Stokes eigenvalue problem (see \cite{lovadina} for instance)
\begin{equation}\label{system_stokes}
\begin{split}
-\bdiv(\nabla\bu)+\nabla p&=\widehat{\lambda}\bu\hspace{0.9cm}\text{in}\,\O,\\
\div\bu &=0\hspace{1.2cm}\text{in}\,\O,\\
\bu&=\boldsymbol{0}\hspace{1.2cm}\text{on}\,\Gamma_D,\\
(\nabla\bu-p\mathbb{I})\bn&=\boldsymbol{0} \hspace{1.2cm}\text{on}\,\Gamma_N.
\end{split}
\end{equation}
where $\bu$ is the velocity, $p$ is the pressure, $\mathbb{I}$ is the identity matrix of $\mathbb{R}^{2\times 2}$ and $\boldsymbol{n}$ is the outward unitary 
vector on $\Gamma_N$. This problem its of much interest for mathematicians 
and engineers due the several applications in different fields, since 
the stability of fluids depends on the knowledge of the natural frequencies
of the Stokes spectral problem. 

It is well known that the classic velocity-pressure formulation like the analyzed in \cite{lovadina} has the advantage of approximate, for the two dimensional case for instance, 
three unknowns: the two components of the velocity and the scalar associated 
to the pressure. However, this mixed formulation is not suitable for the 
computational resolution when standard eigensolvers are used. 

On the other hand, the formulation analyzed in \cite{MMR3} where the so called pseudostress tensor is introduced,  leads to an elliptic problem where the only unknown is the mentioned tensor. Despite to fact that this formulation leads to approximate more unknowns compared with the velocity-pressure formulation, the resulting problem is elliptic and therefore, 
standard eigensolvers like $\texttt{eigs}$ of MATLAB works with no difficulties. Moreover, the velocity and the pressure of the Stokes eigenproblem can be recovered by postprocessing  the solution of the elliptic problem.

The pseudostress formulation has been recently analyzed in \cite{LM}, with different DG methods based in interior penalization. In this methods, the stabilization parameter affects
strongly the behavior of spurious eigenvalues and the choice of such parameter, in order to avoid the spurious eigenvalues,  depends on the configuration of the problem, namely the geometry and boundary conditions. On the other hand, the virtual element method (VEM), introduced in \cite{MR2997471},  results to be more attractive  since, in one hand, we are able to use arbitrary polygonal meshes and on the other, we don not have to deal with a penalization parameter and the extra terms related to  DG formulations. Also we remark the simplicity of the computational implementation of this method, compared with other classic finite element approximations.

In the present paper we introduce a high order VEM in order to solve problem \eqref{system_stokes} with the pseuodstress formulation introduced in \cite{MMR3}. Several papers deal with the Stokes and Navier Stokes problems, implementing the VEM in order to approximate the velocity and pressure considering different formulations (see for instance \cite{ABMVsinum14,BDV20,BLV17,BMV19,BDDD19,CG,CGM16,Clong19,DKS18,FM20,GMV19,GV18,GMS182,GMS18,IH19,VG18,CGMM20,MR3886352}). In particular, in \cite{CG} the authors analyze rigorously a VEM for the steady Stokes problem, introducing the pseudostress tensor which leads to a mixed formulation where the main unknowns are the velocity field and the pseudostress. By means of suitable VEM spaces and the corresponding projection operator, classic on the VEM setting, the authors show stability of the method and optimal order of approximation. Also in \cite{CGS} a mixed VEM is analyzed for the Brinkman problem. However, the analysis of these references are related to  source problems.   For our case, we will adapt the VEM framework developed in  \cite{CG, CGS}  for the eigenvalue problem formulation 
of \cite{MMR3}, where the virtual spaces and the corresponding virtual projection, designed for the tensorial source problem, will be 
useful for the spectral one. On the other hand, we have to deal with a non-compact solution operator in this pseudostress formulation, which implies the
adaptation of the classic theory of \cite{DNR1,DNR2} in the VEM setting, due the non conformity of the bilinear forms, in order to prove spectral correctness and error estimates.

The paper is organized as follows: In section \ref{section:2} we introduce the pseudostress formulation of problem \eqref{system_stokes} and recall basic properties of the corresponding solution operator of the spectral problem. In section \ref{sec:spec_app} we introduce the VEM framework where we will operate. This includes the standard hypothesis on the mesh, degrees of freedom, virtual spaces, approximation properties, and the discrete spectral problem of our interest. Section \ref{sec:spectral_app} is dedicated to the spectral analysis, namely the convergence and spurious free results. In section \ref{sec:error} we obtain error estimates for the eigenfunctions and eigenvalues  and finally, in section \ref{section:5}, we report some numerical tests which will confirm the theoretical results of our study.

We end this section with some of the notations that we will  use below. Given
any Hilbert space $X$, let $X^2$ and $\mathbb{X}$ denote, respectively,
the space of vectors and tensors  with
entries in $X$. In particular, $\mathbb{I}$ is the identity matrix of
$\R^{2\times 2}$ and $\mathbf{0}$ denotes a generic null vector or tensor. 
Given $\btau:=(\tau_{ij})$ and $\bsig:=(\sigma_{ij})\in\R^{2\times 2}$, 
we define as usual the transpose tensor $\btau^{\t}:=(\tau_{ji})$, 
the trace $\tr\btau:=\sum_{i=1}^2\tau_{ii}$, the deviatoric tensor 
$\btau^{\tD}:=\btau-\frac{1}{2}\left(\tr\btau\right)\mathbb{I}$, and the
tensor inner product $\btau:\bsig:=\sum_{i,j=1}^2\tau_{ij}\sigma_{ij}$. 

Let $\O$ be a polygonal Lipschitz bounded domain of $\R^2$ with
boundary $\DO$. For $s\geq 0$, $\norm{\cdot}_{s,\O}$ stands indistinctly
for the norm of the Hilbertian Sobolev spaces $\HsO$, $\HsO^2$ or
$\mathbb{H}^s(\O)$ for scalar, vectorial and tensorial fields, respectively, with the convention $\H^0(\O):=\LO$, $\H^0(\O)^{2}=\L^2(\O)^2$ and $\mathbb{H}^0(\O):=\mathbb{L}^2(\O)$. We also define for
$s\geq 0$ the Hilbert space 
$\mathbb{H}^{s}(\bdiv;\O):=\set{\btau\in\mathbb{H}^s(\O):\ \bdiv\btau\in\HsO^2}$, whose norm
is given by $\norm{\btau}^2_{\mathbb{H}^s(\bdiv;\O)}
:=\norm{\btau}_{s,\O}^2+\norm{\bdiv\btau}^2_{s,\O}$.
Henceforth, we denote by $C$ generic constants independent of the discretization
parameter, which may take different values at different places.

%%%%%%%%%%%%%%%%%%%%%%%%%%%%
\section{The continuous spectral problem}
\label{section:2}

We begin by recalling  the variational formulation
of the Stokes eigenvalue problem proposed in \cite{MMR3}
and some important results from this reference, which will be
needed for our analysis.
%
%The Stokes eigenvalue model problem of our interest is
%the following: find nontrivial $(\lambda,\bu,p)$ such that
%(see \cite{lovadina})

To study problem \eqref{system_stokes} we introduce the pseudostress tensor
$\bsig:=\nabla\bu-p\mathbb{I}$ (see \cite{cai2010,GMS1,GMS2}).
Then, we eliminate the pressure $p$ and the velocity $\bu$ 
(see \cite{MMR3} for further details),
to write the following eigenvalue problem
\begin{equation}\label{system2}
\begin{split}
-\bdiv\bsig&=\widehat{\lambda}\bu\hspace{0.9cm}\text{in}\,\O,\\
\bsig^{\tD}-\nabla\bu &=\boldsymbol{0} \hspace{1.2cm}\text{in}\,\O,\\
\bu&=\boldsymbol{0}\hspace{1.2cm}\text{on}\,\Gamma_D,\\
\bsig\bn&=\boldsymbol{0} \hspace{1.2cm}\text{on}\,\Gamma_N.
\end{split}
\end{equation}

We remark that the pressure can be recovered by the relation $p=-\frac{1}{2}\tr(\bsig)$. Then, using a shift argument, the variational formulation derived from \eqref{system2}  reads as follows: Find $\lambda\in\R$ and $\boldsymbol{0}\neq\bsig\in\bcW:=\{\btau\in\mathbb{H}(\bdiv;\O):\,
\btau\bn=\0\hspace{0.2cm}\text{on}\hspace{0.2cm}\Gamma_N\}$ such that
\begin{equation}\label{spect1}
a(\bsig,\btau)=\lambda b(\bsig,\btau)\qquad\forall\btau\in\bcW,
\end{equation}
where $\lambda:=1+\widehat{\lambda}$ and the bilinear forms $a:\bcW\times\bcW\rightarrow\mathbb{R}$
and $b:\bcW\times\bcW\rightarrow\mathbb{R}$ are defined as
\begin{align*}
\displaystyle a(\bsig,\btau)&:=\int_{\O}\bdiv\bsig\cdot\bdiv\btau+\int_{\O}\bsig^{\tD}:\btau^{\tD},\\
\displaystyle b(\bsig,\btau)&:=\int_{\O}\bsig^{\tD}:\btau^{\tD}.
\end{align*}
 
 The bilinear form $a(\cdot,\cdot)$ is $\bcW$-elliptic as stated in the following result.
 
%The following lemma gives the well-possedness of problem \eqref{spect1}.
\begin{lemma}\label{elipticCont}
There exists a constant $\alpha>0$, depending only on $\O$, such that
\begin{equation*}
 a(\btau,\btau)\geq\alpha\|\btau\|^2_{\mathbb{H}(\bdiv;\O)}\qquad\forall\btau\in\bcW.
\end{equation*}
\end{lemma}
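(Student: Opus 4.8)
The plan is to establish the ellipticity estimate in two stages: first control the deviatoric part of $\btau$ together with $\bdiv\btau$ in the $\L^2$-norm (which is exactly $a(\btau,\btau)$ up to the square), and then upgrade this to control of the full norm $\norm{\btau}_{0,\O}$ by recovering the trace $\tr\btau$. The second stage is where a genuine PDE argument is needed, and it is the main obstacle; the first stage is essentially a tautology.

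Concretely, the key tool is a generalized Korn- or Poincar\'e-type inequality for tensors in $\mathbb{H}(\bdiv;\O)$ with vanishing normal trace on $\Gamma_N$. Since $\Gamma_D$ has positive measure (so that $\Gamma_N \neq \partial\O$ in the pure-Neumann sense that would otherwise force a compatibility condition), one has an estimate of the form
\begin{equation*}
\norm{\btau}_{0,\O}^2 \leq C\left(\norm{\btau^{\tD}}_{0,\O}^2 + \norm{\bdiv\btau}_{0,\O}^2\right)\qquad\forall\btau\in\bcW.
\end{equation*}
This is the analogue, for the space $\bcW$, of the classical inequality used for the Hellinger--Reissner / pseudostress mixed formulations (see, e.g., the arguments in \cite{cai2010,GMS1,GMS2,CG}); it is proved by decomposing $\btau = \btau^{\tD} + \tfrac12(\tr\btau)\mathbb{I}$ and showing that $\norm{\tr\btau}_{0,\O}$ is controlled by solving an auxiliary boundary value problem (equivalently, by a contradiction/compactness argument using that $\bdiv\btau \in \L^2(\O)^2$ and the normal trace vanishes on $\Gamma_N$, the kernel being trivial thanks to the Dirichlet portion $\Gamma_D$).

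Granting that inequality, the proof is immediate: by definition
\begin{equation*}
a(\btau,\btau) = \norm{\bdiv\btau}_{0,\O}^2 + \norm{\btau^{\tD}}_{0,\O}^2
\geq \tfrac12\left(\norm{\bdiv\btau}_{0,\O}^2 + \norm{\btau^{\tD}}_{0,\O}^2\right) + \tfrac{1}{2C}\,\norm{\btau}_{0,\O}^2
\geq \alpha\left(\norm{\btau}_{0,\O}^2 + \norm{\bdiv\btau}_{0,\O}^2\right) = \alpha\norm{\btau}_{\mathbb{H}(\bdiv;\O)}^2,
\end{equation*}
with $\alpha = \tfrac{1}{2}\min\{1, 1/C\}$ depending only on $\O$ (through $C$, which in turn depends on $\Omega$ and the partition of $\partial\Omega$). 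The only subtlety to flag is ensuring the auxiliary inequality genuinely holds for \emph{all} of $\bcW$ and not merely a subspace; this rests on the hypothesis $\Gamma_D$ having positive $(d-1)$-measure, which I would state explicitly as a standing assumption if it is not already implicit in the problem setup.
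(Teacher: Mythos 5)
The paper does not prove this lemma itself; it simply cites \cite[Lemma 2.1]{MMR3}, and your two-stage strategy (write $a(\btau,\btau)=\|\bdiv\btau\|_{0,\O}^{2}+\|\btau^{\tD}\|_{0,\O}^{2}$ and upgrade it to the full $\mathbb{H}(\bdiv;\O)$-norm via the tensor inequality $\|\btau\|_{0,\O}^{2}\le C\left(\|\btau^{\tD}\|_{0,\O}^{2}+\|\bdiv\btau\|_{0,\O}^{2}\right)$) is indeed the standard route taken there. However, you have misidentified the hypothesis that makes the auxiliary inequality true. The kernel of the seminorm $\btau\mapsto\left(\|\btau^{\tD}\|_{0,\O}^{2}+\|\bdiv\btau\|_{0,\O}^{2}\right)^{1/2}$ on $\mathbb{H}(\bdiv;\O)$ consists of the constant multiples $c\,\mathbb{I}$ of the identity ($\btau^{\tD}=\0$ forces $\btau=q\mathbb{I}$, and then $\bdiv\btau=\nabla q=\0$ forces $q$ constant). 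What removes this kernel from $\bcW$ is the \emph{essential} condition $\btau\bn=\0$ on $\Gamma_N$, which kills $c\,\mathbb{I}$ precisely when $\Gamma_N$ has positive measure. The measure of $\Gamma_D$ is irrelevant: in the pure Dirichlet case $\Gamma_D=\partial\O$, $\Gamma_N=\emptyset$ --- which satisfies your proposed standing assumption --- one has $\bcW=\mathbb{H}(\bdiv;\O)\ni\mathbb{I}$ with $a(\mathbb{I},\mathbb{I})=0$, so the lemma as stated fails unless one further imposes $\int_\O\tr\btau=0$; this is exactly what the paper does in its second numerical test by adding a Lagrange multiplier. The correct standing assumption is therefore $|\Gamma_N|>0$, not $|\Gamma_D|>0$.

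A second, smaller point: the ``contradiction/compactness'' alternative you offer for proving the auxiliary inequality is not available here, since $\mathbb{H}(\bdiv;\O)$ does not embed compactly into $\mathbb{L}^{2}(\O)$. The argument that works is constructive: decompose $\btau=\btau^{\tD}+\tfrac12(\tr\btau)\mathbb{I}$, solve an auxiliary divergence problem for $\bw\in\H^{1}(\O)^{2}$ with $\div\bw=\tr\btau$, $\bw=\0$ on $\Gamma_D$ and $\|\bw\|_{1,\O}\le C\|\tr\btau\|_{0,\O}$ (solvable for arbitrary data exactly when $|\Gamma_N|>0$), and integrate by parts against $\btau$; the boundary term vanishes because $\btau\bn=\0$ on $\Gamma_N$ and $\bw=\0$ on $\Gamma_D$. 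With the hypothesis corrected, your final chain of inequalities and the value $\alpha=\tfrac12\min\{1,1/C\}$ are fine.
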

\begin{proof}
See  \cite[Lemma 2.1]{MMR3}.
\end{proof}

Thanks to  Lemma~\ref{elipticCont}, we are in position to introduce
the solution operator $\bT$, defined as follows:
\begin{align*}
\bT:\bcW&\rightarrow\bcW,\\
           \bF&\mapsto \bT\bF:=\wbsig,
\end{align*}
where $\wbsig\in\bcW$ is the unique solution of the following source problem
\begin{equation*}\label{eq:source_prob}
 a(\wbsig,\btau)=b(\bF,\btau)\qquad\forall\btau\in\bcW.
\end{equation*}

As a consequence of Lax-Milgram lemma, we have that the linear operator $\bT$ is well defined and bounded.
Clearly the pair $(\kappa,\bsig)\in\mathbb{R}\times\bcW$ solves problem \eqref{spect1}
if and only if $(\mu=1/\kappa, \bsig)$ is an eigenpair of $\bT$,
with $\mu\neq 0$ and $\bsig\neq\boldsymbol{0}$. 
Moreover, the linear operator $\bT$ is self-adjoint
with respect to the inner product $a(\cdot,\cdot)$ in $\bcW$.

We introduce the following space
\begin{equation*}
\bcK:=\{\btau\in\bcW:\,\bdiv\btau=\0\,\,\text{in}\,\,\O\}.
\end{equation*}
It is clear that $\bT|_{\bcK}:\bcK\rightarrow\bcK$ reduces
to the identity, leading to the conclusion that $\mu=1$
is an eigenvalue of $\bT$ with associated eigenspace $\bcK$.

We recall from \cite{MMR3}  that there exists
an operator $\bP:\bcW\rightarrow\bcW$, defined as follows
\begin{align*}
\bP:\bcW&\rightarrow\bcW,\\
           \bxi&\mapsto \bP\bxi:=\widehat{\bsig},
\end{align*}
where $(\widehat\bsig,\widehat{\bu})\in\bcW\times \L^2(\O)^2$ is the solution of the following well posed mixed problem
\begin{equation}\label{eq:mixto}
\left\{
\begin{array}{cc}
\ds\int_{\O}\widehat{\bsig}^{\tD}:\btau^{\tD}+\int_{\O}\widehat{\bu}\cdot\bdiv\btau=0&\quad\forall\btau\in\bcW,\\
\ds\int_{\O}\bv\cdot\bdiv\widehat{\bsig}=\int_{\O}\bv\cdot\bdiv\bxi&\forall\bv\in \L^2(\O)^2,
\end{array}\right.
\end{equation}
which is the variational formulation of the following Stokes problem with external body force $-\bdiv\bxi$:
\begin{equation}\label{eq:system_div}
\begin{array}{rcl}
-\bdiv\widehat{\bsig}=-\bdiv\bxi&\quad\text{in}\,\O,\\
\widehat{\bsig}^{\tD}-\nabla\widehat{\bu}=\boldsymbol{0}&\quad\text{in}\,\O,\\
\widehat{\bu}=\boldsymbol{0}&\quad\text{on}\,\Gamma_D,\\
\widehat{\bsig}\boldsymbol{n}=\boldsymbol{0}&\quad\text{on}\,\Gamma_N.
\end{array}
\end{equation}
%\begin{align*}
%-\bdiv\wbsig=-\bdiv\bsig&\quad\text{in}\,\O,\\
%\wbsig^{\tD}-\nabla\tilde{\bu}=\boldsymbol{0}&\quad\text{in}\,\O,\\
%\tilde{\bu}=\boldsymbol{0}&\quad\text{on}\,\Gamma_D,\\
%\wbsig\boldsymbol{n}=\boldsymbol{0}&\quad\text{on}\,\Gamma_N.
%\end{align*}
Also, the solution $(\widehat{\bsig},\widehat{\bu})\in\bcW\times \L^2(\O)^2$ of problem \eqref{eq:mixto} satisfies the following 
estimate for $s\in(0,1]$ (see \cite[Lemma 3.2]{MMR3})
\begin{equation}\label{eq:estimate_P}
\|\widehat{\bsig}\|_{s,\O}+\|\widehat{\bu}\|_{1+s,\O}\leq C\|\bdiv\bxi\|_{0,\O}.
\end{equation}
Consequently, $\bP(\bcW)\subset \mathbb{H}^s(\O)$.
 
In summary the operator $\bP$ satisfies
the following properties:
\begin{itemize}
\item $\bP$ is idempotent and its kernel is given by $\bcK$;
\item There exist $C>0$ and $s\in(0,1]$ depending only on the geometry of $\O$ such that
$\bP(\bcW)\subset\mathbb{H}^s(\O)$ and $\Vert\bP(\btau)\Vert_{s,\O}\le C\Vert\bdiv\btau\Vert_{0,\O}$;
\item $\bP(\bcW)$ is invariant for $\bT$. Moreover, $\bP(\bcW)$ is orthogonal
to $\bcK$ with respect to the inner product $a(\cdot,\cdot)$ of $\bcW$.
\end{itemize}

As an immediate consequence of these properties, we have
that the space $\bcW$ is decomposed in the following direct
sum $\bcW=\bcK\oplus\bP(\bcW)$. Moreover, we have the following regularity
result, which proof follows the arguments of those in  \cite[Proposition~3.4]{MMR3}.

\begin{prop}\label{reg}
The operator $\bT$ satisfies
\begin{equation*}
\bT(\bP(\bcW))\subset\{\btau\in\mathbb{H}^s(\O):\bdiv\btau\in\H^{1+s}(\O)^2\},
\end{equation*}
and there exists $C>0$ such that, for all $\bF\in\bP(\bcW)$, if $\widetilde{\bsig}=\bT\bF$, then
\begin{equation*}
\norm{\widetilde{\bsig}}_{s,\O}+\norm{\bdiv\widetilde{\bsig}}_{1+s,\O}\leq C\norm{\bF}_{\mathbb{H}(\bdiv;\O)}, 
\end{equation*}
concluding that $\bT|_{\bP(\bcW)}:\bP(\bcW)\rightarrow \bP(\bcW)$ is compact.
\end{prop}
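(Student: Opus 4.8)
The plan is to reduce the regularity of $\wbsig=\bT\bF$ to the estimate \eqref{eq:estimate_P} for the mixed Stokes problem \eqref{eq:mixto}, by exhibiting $\bdiv\wbsig$ as the difference of two velocities of such problems, each driven by an $\L^2(\O)^2$ datum; compactness then follows from Rellich's theorem.

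First I would record the relations satisfied by $\wbsig$. By definition of $\bT$, $\wbsig\in\bcW$ and
\begin{equation*}
\int_\O\bdiv\wbsig\cdot\bdiv\btau+\int_\O\wbsig^{\tD}:\btau^{\tD}=\int_\O\bF^{\tD}:\btau^{\tD}\qquad\forall\btau\in\bcW.
\end{equation*}
Since $\bP(\bcW)$ is invariant under $\bT$, $\wbsig\in\bP(\bcW)$, hence $\wbsig=\bP\wbsig$; this produces $\widehat{\bu}\in\L^2(\O)^2$ such that $(\wbsig,\widehat{\bu})$ solves \eqref{eq:mixto} with $\bxi=\wbsig$, in particular $\int_\O\wbsig^{\tD}:\btau^{\tD}+\int_\O\widehat{\bu}\cdot\bdiv\btau=0$ for all $\btau\in\bcW$, together with $\|\wbsig\|_{s,\O}+\|\widehat{\bu}\|_{1+s,\O}\le C\|\bdiv\wbsig\|_{0,\O}$ from \eqref{eq:estimate_P}. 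In the same way, $\bF\in\bP(\bcW)$ gives $\bF=\bP\bF$ and $\bw\in\L^2(\O)^2$ with $\int_\O\bF^{\tD}:\btau^{\tD}+\int_\O\bw\cdot\bdiv\btau=0$ for all $\btau\in\bcW$ and $\|\bw\|_{1+s,\O}\le C\|\bdiv\bF\|_{0,\O}\le C\|\bF\|_{\mathbb{H}(\bdiv;\O)}$.

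Next I would combine these. Substituting the two identities for $\int_\O\wbsig^{\tD}:\btau^{\tD}$ and $\int_\O\bF^{\tD}:\btau^{\tD}$ into the defining equation of $\wbsig$ gives $\int_\O(\bdiv\wbsig-\widehat{\bu}+\bw)\cdot\bdiv\btau=0$ for all $\btau\in\bcW$. Since $\bdiv:\bcW\to\L^2(\O)^2$ is onto (standard surjectivity, obtained for a prescribed $\bg\in\L^2(\O)^2$ by taking $\btau$ to be the componentwise gradient of the solution of a mixed Poisson problem with Dirichlet datum on $\Gamma_D$ and homogeneous Neumann datum on $\Gamma_N$), this forces $\bdiv\wbsig=\widehat{\bu}-\bw$, so $\bdiv\wbsig\in\H^{1+s}(\O)^2$. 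Moreover, Lemma~\ref{elipticCont} and the defining equation give $\alpha\|\wbsig\|^2_{\mathbb{H}(\bdiv;\O)}\le a(\wbsig,\wbsig)=b(\bF,\wbsig)\le\|\bF\|_{\mathbb{H}(\bdiv;\O)}\|\wbsig\|_{\mathbb{H}(\bdiv;\O)}$, hence $\|\bdiv\wbsig\|_{0,\O}\le\|\wbsig\|_{\mathbb{H}(\bdiv;\O)}\le C\|\bF\|_{\mathbb{H}(\bdiv;\O)}$. Feeding this bound into the estimates of the previous paragraph, and using $\bdiv\wbsig=\widehat{\bu}-\bw$, one collapses everything to $\|\wbsig\|_{s,\O}+\|\bdiv\wbsig\|_{1+s,\O}\le C\|\bF\|_{\mathbb{H}(\bdiv;\O)}$, which is the asserted estimate.

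For the compactness I would note that $\bP(\bcW)$ is closed, being the range of the bounded idempotent $\bP$, and that the estimate just obtained shows $\bT|_{\bP(\bcW)}$ maps bounded subsets of $\bP(\bcW)$ into bounded subsets of $\{\btau\in\mathbb{H}^s(\O):\bdiv\btau\in\H^{1+s}(\O)^2\}$. As the embeddings $\mathbb{H}^s(\O)\hookrightarrow\mathbb{L}^2(\O)$ and $\H^{1+s}(\O)^2\hookrightarrow\L^2(\O)^2$ are compact by Rellich's theorem, that space embeds compactly into $\mathbb{H}(\bdiv;\O)$; hence $\bT|_{\bP(\bcW)}:\bP(\bcW)\to\bP(\bcW)$ is compact. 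I do not expect a serious obstacle here: the only genuinely load-bearing step is the algebraic identity $\bdiv\wbsig=\widehat{\bu}-\bw$ — i.e.\ recognizing that the projector $\bP$ writes $\bdiv\wbsig$ as a difference of two Stokes velocities with $\L^2$ forcing — after which the result is a routine consequence of \eqref{eq:estimate_P} and Rellich's theorem.
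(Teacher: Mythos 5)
Your argument is correct and is essentially the argument the paper defers to (it cites \cite[Proposition~3.4]{MMR3} rather than writing it out): the extra regularity comes from recognizing $\bdiv\wbsig$ as a Stokes velocity via the mixed problem \eqref{eq:mixto} and the estimate \eqref{eq:estimate_P}, with compactness then following from Rellich and the closedness of $\bP(\bcW)$. The only difference is cosmetic: you split into two auxiliary mixed problems (for $\wbsig$ and for $\bF$) and subtract, whereas the reference applies \eqref{eq:mixto} once to the pair $(\wbsig-\bF,\,\bdiv\wbsig)$; both routes hinge on the same identity and on the surjectivity of $\bdiv:\bcW\to\L^2(\O)^2$, which you justify adequately.
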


%\begin{remark}
%The previous lemma provides the inclusion $\bT(\bP(\bcW))\subset\bP(\bcW)\subset\H^s(\O)^{n\times n}$. 
%\end{remark}

With these results at hand, we have the following spectral characterization of operator $\bT$ proved in \cite[Theorem~3.5 ]{MMR3}.

\begin{lemma}\label{SPEC-CHAR}

The spectrum of  $\bT$ decomposes as follows: $\sp(\bT)=\{0,1\}\cup\{\mu_k\}_{k\in\mathbb{N}}$, where
\begin{itemize}
\item $\mu=1$ is an infinite-multiplicity eigenvalue of  $\bT$ and its associated eigenspace is $\bcK$;
\item $\mu=0$ is an eigenvalue of $ \bT$ and its associated eigenspace is 
\begin{equation*}
\boldsymbol{\mathcal{G}}:=\{\btau\in\bcW:\,\btau^{\tD}=\boldsymbol{0}\}=\{q\mathbb{I}:\, q\in\H^1(\O)\,\,\text{and}\,\, q=0\,\,\hbox{on}\,\,\Gamma_N\};
\end{equation*}
\item $\{\mu_k\}_{k\in\mathbb{N}}\subset (0,1)$ is a sequence of nondefective finite-multiplicity eigenvalues
of $\bT$ which converge to 0. 
\end{itemize}
\end{lemma}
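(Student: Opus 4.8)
The plan is to exploit the $a(\cdot,\cdot)$-orthogonal decomposition $\bcW=\bcK\oplus\bP(\bcW)$ recalled above. The subspace $\bcK$ is closed, being the kernel of the bounded operator $\bdiv\colon\bcW\to\L^2(\O)^2$, and $\bP(\bcW)$, its $a$-orthogonal complement, is closed as well, hence a Hilbert space for $a(\cdot,\cdot)$. Since $\bT$ leaves both summands invariant and restricts to a self-adjoint operator on each, the decomposition reduces $\bT$, so $\sp(\bT)=\sp(\bT|_{\bcK})\cup\sp(\bT|_{\bP(\bcW)})$ and, for each $\mu$, the $\mu$-eigenspace of $\bT$ is the direct sum of those of the two restrictions. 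It thus suffices to analyze the two pieces.

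On $\bcK$ the operator $\bT$ is the identity, so $\sp(\bT|_{\bcK})=\{1\}$; since $\bcK$ is infinite dimensional (it contains $\brot\bphi$ for an infinite-dimensional family of admissible $\bphi$), $\mu=1$ is an eigenvalue of $\bT$ of infinite multiplicity whose eigenspace contains $\bcK$. The reverse inclusion will follow once we know $1\notin\sp(\bT|_{\bP(\bcW)})$: an eigenfunction of $\bT$ for $\mu=1$ splits as $\bF_0+\bF_1$ with $\bF_0\in\bcK$, $\bF_1\in\bP(\bcW)$, whence $\bT\bF_1=\bF_1$ and so $\bF_1=\0$.

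For $\bT|_{\bP(\bcW)}$ I would invoke its compactness (Proposition~\ref{reg}) together with self-adjointness and positivity, $a(\bT\bF,\bF)=b(\bF,\bF)=\|\bF^{\tD}\|_{0,\O}^2\ge0$; the spectral theorem for compact self-adjoint operators then yields $\sp(\bT|_{\bP(\bcW)})=\{0\}\cup\{\mu_k\}_{k\in\mathbb{N}}$ with the $\mu_k$ real, nondefective, of finite multiplicity and $\mu_k\to0$. To locate these values, note that $\bT\bF=\0$ is equivalent to $b(\bF,\cdot)\equiv0$, hence to $\bF^{\tD}=\0$; that $\boldsymbol{\mathcal{G}}:=\{\btau\in\bcW:\btau^{\tD}=\0\}\subset\bP(\bcW)$, since \eqref{eq:system_div} with datum $\bxi=q\mathbb{I}$ is solved by $\widehat{\bu}=\0$, $\widehat{\bsig}=q\mathbb{I}$; and that $\btau^{\tD}=\0$ means $\btau=\frac{1}{2}(\tr\btau)\mathbb{I}$, which together with $\btau\in\mathbb{H}(\bdiv;\O)$ and $\btau\bn=\0$ on $\Gamma_N$ gives the stated description $\boldsymbol{\mathcal{G}}=\{q\mathbb{I}:q\in\H^1(\O),\ q=0\text{ on }\Gamma_N\}$. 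Hence $\mu=0$ is an eigenvalue of $\bT$ with eigenspace exactly $\boldsymbol{\mathcal{G}}$. Finally, if $\bT\bF=\mu\bF$ with $\0\neq\bF\in\bP(\bcW)$, then $\bdiv\bF\neq\0$ because the kernel $\bcK$ and the range $\bP(\bcW)$ of the projection $\bP$ meet only at $\0$, so
\begin{equation*}
\mu\,a(\bF,\bF)=b(\bF,\bF)=\|\bF^{\tD}\|_{0,\O}^2<\|\bF^{\tD}\|_{0,\O}^2+\|\bdiv\bF\|_{0,\O}^2=a(\bF,\bF),
\end{equation*}
which forces $0\le\mu<1$; together with the characterization of the kernel this gives $\{\mu_k\}_{k\in\mathbb{N}}\subset(0,1)$, and also closes the argument of the previous paragraph.

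The genuinely delicate point is this strict inequality $\mu<1$ on $\bP(\bcW)$: it hinges on $\bcK\cap\bP(\bcW)=\{\0\}$ and is precisely what prevents $\mu=1$ from leaking out of $\bcK$. Identifying $\ker\bT$ with $\boldsymbol{\mathcal{G}}$ (and not a larger space) similarly relies on the explicit solvability of \eqref{eq:system_div} for data in $\boldsymbol{\mathcal{G}}$. Everything else — closedness of $\bP(\bcW)$, infinite-dimensionality of $\bcK$, and the compact self-adjoint spectral theorem — is routine.
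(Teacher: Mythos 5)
Your argument is correct and is essentially the standard one behind the cited result ([MMR3, Theorem~3.5], to which the paper simply defers): reduce $\bT$ by the $a$-orthogonal decomposition $\bcW=\bcK\oplus\bP(\bcW)$, note $\bT|_{\bcK}=\boldsymbol{I}$, apply the compact self-adjoint spectral theorem on $\bP(\bcW)$, identify $\ker\bT$ with $\boldsymbol{\mathcal{G}}$, and use $\bcK\cap\bP(\bcW)=\{\0\}$ to get the strict bound $\mu<1$. The only loose end is that you take for granted that there are infinitely many nonzero $\mu_k$ (a compact self-adjoint operator may have finite rank); this follows because $\bT|_{\bP(\bcW)}$ is injective on the infinite-dimensional $a$-orthogonal complement of $\boldsymbol{\mathcal{G}}$ in $\bP(\bcW)$, but it deserves a line.
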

The following result provides additional regularity  for the eigenfunction $\bsig$ associated to some eigenvalue $\mu\in(0,1)$.
\begin{prop}\label{additionalreg}
Let $\bsig\in\bcW$ be an eigenfunction associated with an eigenvalue $\mu\in (0,1)$. Then, there exists a positive constant $C>0$, depending on the eigenvalue, such that
\begin{equation*}
\norm{\bsig}_{r,\O}+\norm{\bdiv\bsig}_{1+r,\O}\leq C\norm{\bsig}_{\mathbb{H}(\bdiv;\O)}, 
\end{equation*}
with $r>0$.
\end{prop}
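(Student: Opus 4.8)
The plan is to reduce the claim to the regularity of the underlying Stokes eigenvalue problem in its primitive velocity–pressure form, exactly as in the source-problem analysis of \cite{MMR3}. Let $\bsig\in\bcW$ be an eigenfunction with eigenvalue $\mu\in(0,1)$, so that $a(\bsig,\btau)=\mu^{-1}b(\bsig,\btau)$ for all $\btau\in\bcW$, i.e. $\bsig=\bT(\mu^{-1}\bsig)$. Since $\mu\in(0,1)$, Lemma~\ref{SPEC-CHAR} tells us $\bsig$ is orthogonal (in the $a(\cdot,\cdot)$ inner product) to $\bcK$ and to $\boldsymbol{\mathcal{G}}$, hence $\bsig\in\bP(\bcW)$ up to the decomposition $\bcW=\bcK\oplus\bP(\bcW)$; more precisely, writing $\bF:=\mu^{-1}\bsig$, one checks $\bP\bF=\bF$ because the $\bcK$-component of $\bsig$ is killed by the eigenvalue relation (eigenfunctions for $\mu\neq 1$ lie in $\bP(\bcW)$). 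Thus $\bF\in\bP(\bcW)$ and $\bsig=\bT\bF$ with $\bF\in\bP(\bcW)$.

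Now invoke Proposition~\ref{reg}: since $\bF\in\bP(\bcW)$, we immediately get
\begin{equation*}
\norm{\bsig}_{s,\O}+\norm{\bdiv\bsig}_{1+s,\O}\leq C\norm{\bF}_{\mathbb{H}(\bdiv;\O)}=C\mu^{-1}\norm{\bsig}_{\mathbb{H}(\bdiv;\O)},
\end{equation*}
with $s\in(0,1]$ the geometric exponent. This already gives the statement with $r=s$ and a constant $C$ depending on $\mu$. If one wants $r$ to possibly exceed $s$ (genuine "additional" regularity beyond the generic one, exploiting that $\bsig$ is an eigenfunction rather than $\bT$ of an arbitrary right-hand side), one bootstraps: recover $\bu=\mu(-\bdiv\bsig)\cdot$(up to the shift, $\widehat{\bu}=\bu$) and $p=-\tfrac12\tr\bsig$, observe that $(\bu,p)$ solves the Stokes system \eqref{system_stokes} with datum $\widehat{\lambda}\bu$, and use elliptic regularity for Stokes on the (possibly mixed-boundary) polygon $\O$ to conclude $\bu\in\H^{1+r}(\O)^2$, $p\in\H^{r}(\O)$ for some $r>0$ determined by the re-entrant corners and the $\Gamma_D/\Gamma_N$ junctions; then $\bsig=\nabla\bu-p\mathbb{I}\in\mathbb{H}^{r}(\O)$ and $\bdiv\bsig=-\widehat{\lambda}\bu\in\H^{1+r}(\O)^2$, with the constant absorbing $\widehat\lambda=\mu^{-1}-1$ and the data norm $\norm{\bu}_{0,\O}\le C\norm{\bsig}_{\mathbb{H}(\bdiv;\O)}$.

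The main obstacle is purely the Stokes elliptic-regularity input on a polygonal domain with mixed boundary conditions: the exponent $r$ is not $1$ in general, and one must cite the appropriate corner-regularity result (as is done implicitly in \cite{MMR3} for the source problem) rather than prove it here. Everything else — the identification $\bsig\in\bP(\bcW)$, the passage through $\bT$, and the algebraic recovery of $(\bu,p)$ — is routine and follows the template already laid out for Proposition~\ref{reg} and estimate \eqref{eq:estimate_P}. I would therefore present the short argument (first two paragraphs above) as the proof, noting that it mimics \cite[Proposition~3.4]{MMR3}, and remark that $r$ can be taken as the Stokes regularity exponent of the domain.
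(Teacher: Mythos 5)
The paper does not actually prove this proposition; it simply cites \cite[Proposition 2.2]{LM}. Your argument is correct and, unlike the paper's, self-contained. The key identification is sound: since $\bT$ is self-adjoint with respect to $a(\cdot,\cdot)$ and the splitting $\bcW=\bcK\oplus\bP(\bcW)$ is $a$-orthogonal, an eigenfunction for $\mu\neq 1$ is $a$-orthogonal to the $\mu=1$ eigenspace $\bcK$ and hence lies in $\bP(\bcW)$; applying Proposition~\ref{reg} to $\bF=\mu^{-1}\bsig$ (so that $\bT\bF=\bsig$) then yields the stated estimate with $r=s$ and a constant proportional to $\mu^{-1}$, which is all the literal statement (``with $r>0$'') requires. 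Your bootstrap remark is also the right way to see why $r$ can exceed the generic source-problem exponent $s$ — the Stokes datum here is the eigenfunction's own velocity, which already has $\H^{1+s}$ regularity rather than merely $\L^2$ — and the one external input it needs, corner regularity for the Stokes system on a polygon with mixed boundary conditions, is precisely what the cited reference supplies. In short: your first two paragraphs constitute a valid proof of the proposition as stated, and arguably a more informative one than the bare citation in the paper.
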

\begin{proof}
See  \cite[Proposition 2.2]{LM}.
\end{proof}

\section{Virtual Element Spectral Approximation}
\label{sec:spec_app}

In this section, we propose and analyze a virtual element method
to approximate the solutions of problem \eqref{spect1}. To do this task, we need to introduce some assumptions and definitions to operate in the virtual element setting.

\subsection{Construction and assumptions on the mesh}
%\label{subsec}

Let $\{\mathcal{T}_h(\O)\}_{h>0}$ be a sequence of decompositions of $\O$ into
elements $E$, %and let $\mathcal{E}_h$ be the set of edges $e$ of $\{\mathcal{T}_h(\O)\}_{h>0}$. 
We suppose that each $\{\mathcal{T}_h(\O)\}_{h>0}$ is built according with the procedure described below.

The polygonal domain $\Omega$ is partitioned
into a polygonal mesh $\mathcal{T}_h$ that is \emph{regular},
in the sense that there exist positive constants 
$ c, \eta$ such that
\begin{enumerate}
\item each edge $e \in \partial E$ has a length $h_e \ge c \: h_E$,
where $h_E$ denotes the diameter of $E$; 
\item each polygon $E$ in the mesh is star-shaped with respect to a ball of radius $\eta h_E$.
\end{enumerate}

For each integer $k\geq 0$ and for each $E\in\mathcal{T}_h$, we introduce the following local virtual element space of order $k$ (see \cite[Subsection 3.2]{CGS}):
\begin{multline}
\nonumber \mathbf{W}_h^E:=\{ \tau:=(\tau_1,\tau_2)^{\texttt{t}}\in\H(\div;E)\cap\H(\rot;E):\,\tau\cdot\boldsymbol{n}|_e\in \textsc{P}_k(e)\quad\forall e\subset\partial E,\\
 \quad\div\tau\in \textsc{P}_{k}(E), \quad\text{rot}\,\tau\in \textsc{P}_{k-1}(E) \},
\end{multline}
where $\text{rot}\tau:=\frac{\partial\tau_2}{\partial x_1}-\frac{\partial\tau_1}{\partial x_2}$ and $\textsc{P}_{-1}(E)=\{0\}$. Now, given $\tau\in \mathbf{W}_h^E$ we define the following degrees of freedom
\begin{align}
\ds\int_e\tau\cdot\boldsymbol{n} q\qquad&\forall q\in\textsc{P}_k(e)\quad\forall\,\text{edge}\,\,e\in\mathcal{T}_h,\label{eq:dof_normal0}\\
\int_E\tau\cdot\nabla q\qquad&\forall q\in\textsc{P}_{k}(E)\quad\forall E\in\mathcal{T}_h,\label{eq:dof_grad0}\\
\int_E\tau\cdot\boldsymbol{q}\qquad&\forall\boldsymbol{q}\in\mathcal{H}_{k}^{\bot}(E)\quad\forall E\in\mathcal{T}_h,\label{eq:dof_rot0} 
\end{align}
where $\mathcal{H}_k^{\bot}$ is a basis for $(\nabla\textsc{P}_{k+1}(E))^{\bot|_{\textbf{\textsc{P}}_k(E)}}\cap\textbf{\textsc{P}}_k(E)$, which is the $\textbf{\textsc{L}}^2$-orthogonal of $\nabla\textsc{P}_{k+1}(E)$ in $\textbf{\textsc{P}}_k(E)$. A complete description of the details and properties of these spaces can be found in \cite[Subsection 3.2]{CGS}. 

%On the other hand,  the amount of local degrees offreedom defined in \eqref{eq:dof_normal1}--\eqref{eq:dof_q} is given by
%\begin{equation*}
%n_k^E:=(k+1)\#(\mathcal{E}_h)+\left\{\frac{(k+1)(k+2}{2}-1 \right\}+\frac{k(k+1)}{2}=(k+1)(\#(\mathcal{E}_h)+k+1)-1.
%\end{equation*}}

We  now  introduce   for each $E\in\mathcal{T}_h$ the tensorial local virtual element space
\begin{equation}
\label{eq:global_space}
\bcW_{h}^{E}:=\{\btau\in\mathbb{H}(\bdiv;E)\cap\mathbb{H}(\brot;E) : (\tau_{i1},\tau_{i2})^{t}\in \mathbf{W}_{h}^{E}\quad \forall i\in\{1,2\}\},
\end{equation}
which is unisolvent respect to the following degrees of freedom:
\begin{align}
\ds\int_e\btau\boldsymbol{n}\cdot\boldsymbol{q}\qquad&\forall\boldsymbol{q}\in\textbf{\textsc{P}}_k(e)\quad\forall\,\text{edge}\,\,e\in\mathcal{T}_h,\label{eq:dof_normal}\\
\int_E\btau:\nabla\boldsymbol{q}\qquad&\forall\boldsymbol{q}\in\textbf{\textsc{P}}_{k}(E)\quad\forall E\in\mathcal{T}_h,\label{eq:dof_grad}\\
\int_E\btau :\boldsymbol{\rho}\qquad&\forall\boldsymbol{\rho}\in\boldsymbol{\mathcal{H}}_{k}^{\bot}(E)\quad\forall E\in\mathcal{T}_h,\label{eq:dof_rot}
\end{align}
where 
\begin{equation*}
\ds \boldsymbol{\mathcal{H}}_k^{\bot}:=\left\{\begin{pmatrix}\mathbf{q}\\\boldsymbol{0}\end{pmatrix} \,:\mathbf{q}\in\mathcal{H}_k^{\bot}(E)\right\}\cup\left\{\begin{pmatrix}\boldsymbol{0}\\\mathbf{q}\end{pmatrix} \,:\mathbf{q}\in\mathcal{H}_k^{\bot}(E)\right\}.
\end{equation*}
Finally, for every decomposition $ \mathcal{T}_h$ of $\O$ into simple polygons  $E$, we define  the global virtual element space
$$\bcW_h:=\{\btau\in \bcW: \btau|_E\in \bcW_h^E\,\, \text{for all}\,\, E\in \mathcal{T}_h\},$$

\subsection{Discrete bilinear forms}
In what follows we will define computable bilinear forms in order to analyze and implement the virtual element method. To do this task, we will define the split the bilinear forms  $a(\cdot,\cdot)$ and $b(\cdot,\cdot)$ as follos %$a_h:\bcW_h\times \bcW_h\rightarrow \mathbb{R}$ and $b_h:\bcW_h\times \bcW_h\rightarrow \mathbb{R}$ as follows
\begin{align*}
\displaystyle a(\bsig,\btau)=\sum_{E\in\mathcal{T}_h}a^E(\bsig,\btau)&:=\sum_{E\in\mathcal{T}_h}\int_{E}\bdiv\bsig\cdot\bdiv\btau+b(\bsig,\btau),\\
\displaystyle b(\bsig,\btau)=\sum_{E\in\mathcal{T}_h}b^E(\bsig,\btau)&:=\sum_{E\in\mathcal{T}_h}\int_{E}\bsig^{\tD}:\btau^{\tD}.
\end{align*}
%\begin{align*}
%\displaystyle a_h(\bsig,\btau)=\sum_{E\in\mathcal{T}_h}a_h^E(\bsig,\btau)&:=\sum_{E\in\mathcal{T}_h}\int_{E}\bdiv\bsig\cdot\bdiv\btau+b_h(\bsig,\btau),\\
%\displaystyle b_h(\bsig,\btau)=\sum_{E\in\mathcal{T}_h}b_h^E(\bsig,\btau)&:=\sum_{E\in\mathcal{T}_h}\int_{E}\bsig^{\tD}:\btau^{\tD}.
%\end{align*}
We observe that the term  $\int_{E}\bdiv\bsig\cdot\bdiv\btau$ is explicitly computable with the degrees of freedom defined in \eqref{eq:dof_normal}--\eqref{eq:dof_rot}. On the other hand, we have the term
\begin{equation*}
\sum_{E\in\mathcal{T}_h}b^E(\bsig,\btau)=\sum_{E\in\mathcal{T}_h}\int_{E}\bsig^{\tD}:\btau^{\tD}, 
\end{equation*}
which is not explicitly computable with the defined degrees of freedom. To overcome this difficulty, we need to introduce suitable spaces where the elements of $\bcW_h$ will be projected. 

With this aim, we introduce some operators for the analysis of our virtual element method. Let $\mathcal{P}_{k}^h:\L^2(\O)^{2}\rightarrow \{\boldsymbol{q}\in \L^2(\O)^{2}\;\ \boldsymbol{q}|_{E}\in\textbf{\textsc{P}}_{k}(E)\quad \forall E\in \mathcal{T}_h\}$ be the orthogonal projector which, for $\tau\in\L^2(\O)^{2}$, is characterized by
\begin{equation*}
\ds\int_E\mathcal{P}_{k}^h(\tau)\cdot\boldsymbol{q}=\int_E\tau\cdot\boldsymbol{q}\quad\forall E\in\mathcal{T}_h, \quad\forall\boldsymbol{q}\in\textbf{\textsc{P}}_{k}(E).
\end{equation*}
Note that $\mathcal{P}_{k}^h(\bv)|_E=\mathcal{P}_{k}^h(\bv|_E)$. Moreover, $\mathcal{P}_{k}^h(\tau)$ is explicitly computable for every $\tau\in\mathbf{W}_h^E$ using only its degree of freedom  \eqref{eq:dof_normal0}--\eqref{eq:dof_rot0}. 

On the other hand, for   $\boldsymbol{q}\in \textbf{\textsc{P}}_{k}(E)$ we know that  there exist unique $\boldsymbol{q}^{\bot}\in(\nabla \textsc{P}_{k+1}(E)^{\bot|_{\textbf{\textsc{P}}_k(E)}}\cap \textbf{\textsc{P}}_{k}(E)) $ and $\widetilde{q}\in  \textsc{P}_{k+1}(E)$, such that $\boldsymbol{q}=\boldsymbol{q}{\bot}+\nabla\widetilde{q}$, (see \cite{{CGS}} for more details). Then 
\begin{equation*}
\ds\int_E\tau_{h}\cdot\boldsymbol{q}=\int_{E}\tau_{h}\cdot\boldsymbol{q}^{\bot}+ \int_{E}\tau_{h}\cdot\nabla\widetilde{q}=\int_{E}\tau_{h}\cdot\boldsymbol{q}^{\bot}-\int_{E}\widetilde{q}\div\tau+\int_{\partial E}\tau\cdot\n\widetilde{q}.
\end{equation*}
 Also, for $m\in\{0,1,\ldots,k+1\}$, this operator satisfies the following error estimate (see \cite{CG} for further details),
\begin{equation*}
\|\tau-\mathcal{P}_k^h\tau\|_{0,E}\leq C h_E^m|\tau|_{m,E}\quad\forall\tau\in\H^m(E)^{2}, \,\forall E\in\mathcal{T}_h.
\end{equation*}

Now, inspired by the analysis presented in \cite[Subection 4.1]{CGS}, for each $E\in\mathcal{T}_h$ we define  $\widehat{\Pi}_h ^E:=\boldsymbol{\mathcal{P}}_{k}^h:\mathbb{L}^2(E)\rightarrow\mathbb{P}_k(E)$ be the $\mathbb{L}^2(E)$-orthogonal projector, which satisfies the following properties%\FL{that is to say $\widehat{\Pi}_h ^E$ stands for the operator $\mathcal{P}_{k}$ acting along each row of a tensor in $\mathbb{L}^2(E)$.
%$$\int_{E}\widehat{\Pi}_h ^E(\btau):\boldsymbol{\rho}=\int_{E}\btau:\boldsymbol{\rho},$$
 %such that satisfies the following properties}
\begin{itemize}
\item[(A.1)] There exists a positive constant $C$, independent of $E$, such that
\begin{equation*}
\|\widehat{\Pi}_h^E(\btau)\|_{0,E}\leq\|\btau\|_{0,E}\quad\forall\btau\in\mathbb{H}(\bdiv;E),
\end{equation*}
\item[(A.2)] $\ds\int_E\big(\widehat{\Pi}_h^E\btau \big)^{\tD}:\big(\widehat{\Pi}_h^E\boldsymbol{\rho} \big)^{\tD}=\int_E\big(\widehat{\Pi}_h^E\btau \big)^{\tD}:\boldsymbol{\rho}^{\tD}$, for all $\btau,\boldsymbol{\rho}\in\mathbb{H}(\bdiv;E)$, and
\item[(A.3)] given an integer $0\leq m\leq k+1$, there exists a positive constant $C$, independent of $E$, such that 
\begin{equation*}
\| \btau-\widehat{\Pi}_h^E\btau\|_{0,E}\leq C h_E^m|\btau|_{m,E},
\end{equation*}
for all $\btau\in\mathbb{H}^m(E)$. %\GR{or at least for all $\btau\in\bcW_{\nabla\curl}^{m}(E)$ where
%\begin{equation*}
%\bcW_{\nabla\curl}^{m}(E):=\{ \boldsymbol{\xi}\in\mathbb{H}^m(\O)\,:\, \boldsymbol{\xi}^{\tD}=\nabla\curl\boldsymbol{w}\,\,\text{for some}\,\,\boldsymbol{w}\in\H^{r+2}(E)^{2} \}
%\end{equation*}}
\end{itemize}

From \cite[Section 4]{CG}, (A.1) and (A.3) are straightforward, meanwhile (A.2) follows from the fact that if $\boldsymbol{\rho}\in\mathbb{P}_k(E)$ it holds that $\boldsymbol{\rho}^{\tD}\in\mathbb{P}_k(E)$ and, for all $\boldsymbol{\rho}, \btau\in\mathbb{P}_k(E$), we have
\begin{align*}
\int_E\left(\widehat{\Pi}_k^E\btau\right)^{\tD}:\left(\widehat{\Pi}_k^E\boldsymbol{\rho}\right)^{\tD}:=\int_E\widehat{\Pi}_k^E\boldsymbol{\rho}:\left(\widehat{\Pi}_k^E\btau\right)^{\tD}=\int_E \boldsymbol{\rho}:\left(\widehat{\Pi}_k^E\btau\right)^{\tD}=\int_E\left(\widehat{\Pi}_k^E\btau\right)^{\tD}:\boldsymbol{\rho}^{\tD}.
\end{align*}

On the other hand, let $S^E(\cdot,\cdot)$ be any symmetric positive definite bilinear form that satisfies
\begin{equation}
\label{eq:s_stable}
c_0\int_E\btau_h:\btau_h\leq S^E(\btau_h,\btau_h)\leq c_1\int_E\btau_h:\btau_h\quad\forall\btau_h\in\bcW_h^E,
\end{equation}
where $c_0$ and $c_1$ are positive constants depending on the mesh assumptions. Then, for each element we define the bilinear form
\begin{equation*}
\displaystyle b_h ^E(\bsig_h,\btau_h):=\int_E\left(\widehat{\Pi}_h^E\bsig_h\right)^{\tD}:\left(\widehat{\Pi}_h^E\btau_h\right)^{\tD}+S^E\left(\bsig_h-\widehat{\Pi}_h^E
\bsig_h,\btau_h-\widehat{\Pi}_h^E
\btau_h\right),
\end{equation*}
for  $\bsig_h,\btau_h\in\bcW_h^E$ and, in a natural way,
\begin{align*}
%\displaystyle a_h(\bsig,\btau)=\sum_{E\in\mathcal{T}_h}a_h^E(\bsig,\btau)&:=\sum_{E\in\mathcal{T}_h}\int_{E}\bdiv\bsig\cdot\bdiv\btau+b_h(\bsig,\btau),\\
\displaystyle b_h(\bsig,\btau):=\sum_{E\in\mathcal{T}_h}b_h^E(\bsig,\btau),\qquad \bsig_h,\btau_h\in\bcW_h^E.
\end{align*}

%\FL{In particular, if we consider $\widehat{\Pi}_k^E:=\mathcal{P}_k^E:\mathbb{L}^2(E)\rightarrow\textbf{\textsc{P}}_k(E)$, i.e, the $\mathbb{L}^2(E)$-orthogonal projection, we have that the dimension of $\textbf{\textsc{P}}_k^E$ is given by (see \cite[equation (4.13)]{CGS})
%\begin{equation*}
%\dim\textbf{\textsc{P}}_k^E=(k+1)(2k+4).
%\end{equation*}
%
%This choice for our virtual projection operator satisfies assumptions (A.1), (A.2) and (A.3). In fact (A.1) and (A.3) are straightforward, meanwhile (A.2) follows from the fact that if $\boldsymbol{\rho}\in\textbf{\textsc{P}}_k(E)$ it holds that $\boldsymbol{\rho}^{\tD}\in\textbf{\textsc{P}}_k(E)$ and, for all $\boldsymbol{\rho}, \btau\in\textbf{\textsc{P}}_k(E$), we have
%\begin{align*}
%\int_E\left(\widehat{\Pi}_k^E\boldsymbol{\rho}\right)^{\tD}:\left(\widehat{\Pi}_k^E\btau\right)^{\tD}=\int_E\widehat{\Pi}_k^E\btau:\left(\widehat{\Pi}_k^E\boldsymbol{\rho}\right)^{\tD}=\int_E \btau:\left(\widehat{\Pi}_k^E\boldsymbol{\rho}\right)^{\tD}=\int_E\left(\widehat{\Pi}_k^E\boldsymbol{\rho}\right)^{\tD}: \btau^{\tD}.
%\end{align*}}

The following result states that bilinear form $b_h^E(\cdot,\cdot)$ is stable.
\begin{lemma}
\label{lmm:stab}
For each $E\in\mathcal{T}_h$ there holds
\begin{equation*}
b_h^E(\boldsymbol{\rho},\btau)= b^E(\boldsymbol{\rho},\btau)\quad \forall\boldsymbol{\rho}\in\mathbb{P}_k(E),\quad\forall\btau\in\bcW_h^E,
\end{equation*}
and there exist constants $\alpha_1,\alpha_2$, independent of $h$ and $E$, such that
\begin{equation*}
\alpha_1 b^E(\btau,\btau)\leq b_h^E(\btau,\btau)\leq \alpha_2 \left(\|\btau\|_{0,E}^{2}+\|\btau-\widehat{\Pi}_h^E\btau\|_{0,E}^{2}\right)\quad\forall\btau\in\bcW_h^E.
\end{equation*}
\end{lemma}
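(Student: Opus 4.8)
The statement decomposes into two parts: the polynomial consistency identity $b_h^E(\boldsymbol{\rho},\btau)=b^E(\boldsymbol{\rho},\btau)$ for $\boldsymbol{\rho}\in\mathbb{P}_k(E)$, and the two-sided stability bound. I would treat them separately, starting with consistency. The plan is to take $\boldsymbol{\rho}\in\mathbb{P}_k(E)$; then $\widehat{\Pi}_h^E\boldsymbol{\rho}=\boldsymbol{\rho}$ by the projection property, so the stabilization term $S^E(\boldsymbol{\rho}-\widehat{\Pi}_h^E\boldsymbol{\rho},\btau-\widehat{\Pi}_h^E\btau)=S^E(\mathbf{0},\btau-\widehat{\Pi}_h^E\btau)=0$ by bilinearity. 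It remains to show $\int_E(\widehat{\Pi}_h^E\boldsymbol{\rho})^{\tD}:(\widehat{\Pi}_h^E\btau)^{\tD}=\int_E\boldsymbol{\rho}^{\tD}:\btau^{\tD}$, i.e.\ $\int_E\boldsymbol{\rho}^{\tD}:(\widehat{\Pi}_h^E\btau)^{\tD}=\int_E\boldsymbol{\rho}^{\tD}:\btau^{\tD}$. This is exactly property (A.2) (read with the roles of the two arguments exchanged, which is legitimate since that property is symmetric in $\btau,\boldsymbol{\rho}$ modulo the fact that one argument is already a polynomial): since $\boldsymbol{\rho}\in\mathbb{P}_k(E)$ implies $\boldsymbol{\rho}^{\tD}\in\mathbb{P}_k(E)$, the deviator of the $\mathbb{L}^2$-projection acts as the identity against it, i.e.\ $\int_E\boldsymbol{\rho}^{\tD}:(\btau-\widehat{\Pi}_h^E\btau)^{\tD}=\int_E(\boldsymbol{\rho}^{\tD}-\widehat{\Pi}_h^E(\boldsymbol{\rho}^{\tD})):(\btau-\widehat{\Pi}_h^E\btau)^{\tD}$, and one checks that $\boldsymbol{\rho}^{\tD}$ being a polynomial makes this vanish against $\btau-\widehat{\Pi}_h^E\btau$. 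Concretely I would just invoke (A.2) with arguments $(\btau,\boldsymbol{\rho})$ to get $\int_E(\widehat{\Pi}_h^E\btau)^{\tD}:(\widehat{\Pi}_h^E\boldsymbol{\rho})^{\tD}=\int_E(\widehat{\Pi}_h^E\btau)^{\tD}:\boldsymbol{\rho}^{\tD}$, then use $\widehat{\Pi}_h^E\boldsymbol{\rho}=\boldsymbol{\rho}$ on the left and the orthogonality $\int_E(\widehat{\Pi}_h^E\btau-\btau):\boldsymbol{q}=0$ for all $\boldsymbol{q}\in\mathbb{P}_k(E)$ (applied to $\boldsymbol{q}=\boldsymbol{\rho}^{\tD}$, noting $(\widehat{\Pi}_h^E\btau)^{\tD}:\boldsymbol{\rho}^{\tD}=\widehat{\Pi}_h^E\btau:\boldsymbol{\rho}^{\tD}$) on the right to reach $\int_E\btau:\boldsymbol{\rho}^{\tD}=\int_E\btau^{\tD}:\boldsymbol{\rho}^{\tD}=b^E(\boldsymbol{\rho},\btau)$.

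For the upper bound in the stability estimate, write $b_h^E(\btau,\btau)=\|(\widehat{\Pi}_h^E\btau)^{\tD}\|_{0,E}^2+S^E(\btau-\widehat{\Pi}_h^E\btau,\btau-\widehat{\Pi}_h^E\btau)$. The first term is bounded by $\|\widehat{\Pi}_h^E\btau\|_{0,E}^2$ (the deviator operator is a bounded projection on $\mathbb{R}^{2\times2}$, hence has norm $\le1$ in the pointwise Frobenius norm up to an absolute constant), which by (A.1) is $\le\|\btau\|_{0,E}^2$; the second term is bounded using the upper inequality in \eqref{eq:s_stable} with $\btau_h$ replaced by $\btau-\widehat{\Pi}_h^E\btau\in\bcW_h^E$ (note this difference does lie in the discrete space since $\widehat{\Pi}_h^E\btau\in\mathbb{P}_k(E)\subset\bcW_h^E$), giving $S^E(\btau-\widehat{\Pi}_h^E\btau,\btau-\widehat{\Pi}_h^E\btau)\le c_1\|\btau-\widehat{\Pi}_h^E\btau\|_{0,E}^2$. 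Summing and taking $\alpha_2=\max\{C,c_1\}$ with $C$ the absolute deviator constant yields the claim.

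For the lower bound $\alpha_1 b^E(\btau,\btau)\le b_h^E(\btau,\btau)$, the key is that $b^E(\btau,\btau)=\|\btau^{\tD}\|_{0,E}^2$ and I need to control this by $\|(\widehat{\Pi}_h^E\btau)^{\tD}\|_{0,E}^2+S^E(\btau-\widehat{\Pi}_h^E\btau,\btau-\widehat{\Pi}_h^E\btau)$. Split $\btau^{\tD}=(\widehat{\Pi}_h^E\btau)^{\tD}+(\btau-\widehat{\Pi}_h^E\btau)^{\tD}$, apply the triangle inequality and Young's inequality to get $\|\btau^{\tD}\|_{0,E}^2\le 2\|(\widehat{\Pi}_h^E\btau)^{\tD}\|_{0,E}^2+2\|(\btau-\widehat{\Pi}_h^E\btau)^{\tD}\|_{0,E}^2$, then bound $\|(\btau-\widehat{\Pi}_h^E\btau)^{\tD}\|_{0,E}^2\le C\|\btau-\widehat{\Pi}_h^E\btau\|_{0,E}^2\le (C/c_0)S^E(\btau-\widehat{\Pi}_h^E\btau,\btau-\widehat{\Pi}_h^E\btau)$ using the lower bound in \eqref{eq:s_stable}. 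Combining gives $b^E(\btau,\btau)\le\max\{2,2C/c_0\}\,b_h^E(\btau,\btau)$, so $\alpha_1=1/\max\{2,2C/c_0\}$ works. The only mild subtlety — and the one point I would double-check carefully — is the closure property $\widehat{\Pi}_h^E\btau\in\bcW_h^E$, i.e.\ that constant and polynomial tensors of degree $\le k$ belong to the local virtual space $\bcW_h^E$; this is standard for these $\H(\bdiv)\cap\H(\brot)$-type virtual spaces (polynomials of degree $k$ satisfy $\div\tau\in\textsc{P}_k$, $\rot\tau\in\textsc{P}_{k-1}$, $\tau\cdot\n|_e\in\textsc{P}_k(e)$), but it is the load-bearing assumption that makes the stabilization term well-defined, so I would state it explicitly. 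Everything else is routine; no step presents a genuine obstacle, the proof being essentially a repackaging of (A.1)–(A.3) and \eqref{eq:s_stable}.
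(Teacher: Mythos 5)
Your proof is correct and complete: the consistency identity via $\widehat{\Pi}_h^E\boldsymbol{\rho}=\boldsymbol{\rho}$, property (A.2), and the $\mathbb{L}^2$-orthogonality of the projector, together with the two-sided stability bound obtained from (A.1), the pointwise estimate $|\btau^{\tD}|\le|\btau|$, and \eqref{eq:s_stable}, is exactly the standard VEM stability argument. The paper itself gives no proof here but only cites \cite[Lemma 4.6]{CG}, and your reconstruction — including the explicitly flagged inclusion $\mathbb{P}_k(E)\subset\bcW_h^E$ that is needed to apply \eqref{eq:s_stable} to $\btau-\widehat{\Pi}_h^E\btau$ — matches what that reference does.
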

\begin{proof}
See \cite[Lemma 4.6]{CG}.
\end{proof}
\subsection{Discrete spectral problem}

Now we will introduce the discretization of problem \eqref{spect1} which reads as follows: Find $\lambda_h\in\mathbb{R}$ and $\boldsymbol{0}\neq\bsig_h\in\bcW_h$ such that
\begin{equation}
\label{eq:disc_prob}
a_h(\bsig_h,\btau_h)=\lambda_h b_{h}(\bsig_h,\btau_h)\quad\forall\btau_h\in\bcW_h,
\end{equation}
where $\lambda_h=1+\widehat{\lambda}_h$ and
\begin{equation*}
a_h(\bsig_h,\btau_h):=\int_\O\bdiv \bsig_h\cdot\bdiv\btau_h+b_h(\bsig_h,\btau_h)\qquad \forall \bsig_h,\btau_h\in\bcW_h.
\end{equation*}

The following result establishes that the bilinear form $a_h(\cdot,\cdot)$ is elliptic in $\mathbb{W}_h$.
\begin{lemma}
\label{lmm:ellip_h}
There exists a constant $\widehat{\alpha}>0$, independent of $h$, such that
\begin{equation*}
a_h(\btau,\btau)\geq\widehat{\alpha}\|\btau\|^2_{\mathbb{H}(\bdiv;\O)}\quad\forall\btau\in\bcW_h.
\end{equation*}
\end{lemma}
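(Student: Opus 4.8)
The plan is to decompose $a_h(\btau,\btau)$ into its two constituent pieces, $\int_\O \bdiv\btau\cdot\bdiv\btau = \norm{\bdiv\btau}_{0,\O}^2$ and $b_h(\btau,\btau) = \sum_{E\in\mathcal{T}_h} b_h^E(\btau,\btau)$, and then control the latter from below using the stability estimate in Lemma~\ref{lmm:stab} (specifically the lower bound $\alpha_1 b^E(\btau,\btau)\le b_h^E(\btau,\btau)$). Summing over $E$ gives $b_h(\btau,\btau)\ge \alpha_1 \sum_{E} b^E(\btau,\btau) = \alpha_1 b(\btau,\btau) = \alpha_1 \norm{\btau^{\tD}}_{0,\O}^2$. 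Hence $a_h(\btau,\btau)\ge \min\{1,\alpha_1\}\bigl(\norm{\bdiv\btau}_{0,\O}^2 + \norm{\btau^{\tD}}_{0,\O}^2\bigr) = \min\{1,\alpha_1\}\, a(\btau,\btau)$ for every $\btau\in\bcW_h$.

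Since $\bcW_h\subset\bcW$, the continuous ellipticity in Lemma~\ref{elipticCont} applies directly to any $\btau\in\bcW_h$: $a(\btau,\btau)\ge\alpha\norm{\btau}_{\mathbb{H}(\bdiv;\O)}^2$. Chaining this with the previous inequality yields $a_h(\btau,\btau)\ge \widehat{\alpha}\norm{\btau}_{\mathbb{H}(\bdiv;\O)}^2$ with $\widehat{\alpha}:=\alpha\min\{1,\alpha_1\}$, which is independent of $h$ because $\alpha$ depends only on $\O$ and $\alpha_1$ is $h$-independent by Lemma~\ref{lmm:stab}. This completes the argument.

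The only delicate point is that Lemma~\ref{lmm:ellip_h} requires ellipticity in the full $\H(\bdiv;\O)$-norm, whereas $b_h$ alone controls only the deviatoric part; the resolution is that the $\bdiv$-term in $a_h$ coincides exactly with the one in $a$ (it is computed exactly, no projection), so no estimate is lost there, and the continuous Korn-type inequality of Lemma~\ref{elipticCont} then does the remaining work of converting $\norm{\bdiv\btau}_{0,\O}^2+\norm{\btau^{\tD}}_{0,\O}^2$ into the full norm. I would also note explicitly that passing from the elementwise bound to the global one requires nothing more than summation, since both $a_h$ and $b$ are defined as sums of their elementwise contributions and the factor $\alpha_1$ in Lemma~\ref{lmm:stab} is uniform over $E$. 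I do not anticipate any genuine obstacle; the proof is a short chaining of two already-stated lemmas.
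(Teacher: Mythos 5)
Your proof is correct and is essentially the standard argument that the paper delegates to the citation of \cite[Lemma 5.1]{CGS}: the divergence term of $a_h$ is exact, the stability bound $\alpha_1 b^E(\btau,\btau)\le b_h^E(\btau,\btau)$ of Lemma~\ref{lmm:stab} is summed over the elements, and the conformity $\bcW_h\subset\bcW$ lets Lemma~\ref{elipticCont} finish the job with $\widehat\alpha=\alpha\min\{1,\alpha_1\}$. No gaps.
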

\begin{proof}
See \cite[Lemma 5.1]{CGS}.
\end{proof}
With this ellipticity result at hand, we are in position to introduce the discrete solution operator
\begin{equation*}
\label{operatordisc}
\begin{split}
\bT_h:\bcW&\rightarrow\bcW_h,\\
\bF&\mapsto \bT_h\bF:=\wbsig_h,
\end{split}
\end{equation*}
where $\wbsig_h\in\bcW_h$ is the unique solution,
as a consequence of Lemma \ref{lmm:ellip_h} and the Lax-Milgram lemma,
of the following discrete source problem:
\begin{equation*}\label{charcTDG}
a_h(\wbsig_h,\btau_h)=b_h(\bF,\btau_h)\qquad\forall\btau_h\in\bcW_h.
\end{equation*}

It is easy to check that $\bT_h$ is self-adjoint respect to $a_h(\cdot,\cdot)$. On the other hand, $(\lambda_h,\bsig_h)\in\mathbb{R}\times\bcW_h$ solves \eqref{eq:disc_prob} if and only if $(\mu_h=1/\lambda_h, \bsig_h)$ is an eigenpair of $\bT_h$.
Moreover, as a direct consequence, we have that $\bT_h$ is well defined and uniformly
bounded respect to $h$.

We define the space
\begin{equation*}
\bcK_h:=\bcK\cap\bcW_h=\{\btau_h\in\bcW_h\,:\,\,\bdiv\btau_h=\boldsymbol{0}\,\,\text{in}\,\,\O\},
\end{equation*}
which is the eigenspace associated to the eigenvalue $\mu_h=1$ of $\bT_h$ (\cite[Lemma 4.3]{MMR3}).

Let  $\bcI_k^h: \mathbb{H}^t(\O) \to \bcW_h$ be the tensorial version of the VEM-interpolation operator, 
which satisfies the following classical error estimate, see \cite[Lemma 6]{BeiraoVemAcoustic2017},
\begin{equation}\label{asymp0}
 \norm{\btau - \bcI_k^h \btau}_{0,\O} \leq C h^{\min\{t, k+1\}} \norm{\btau}_{t,\O} \qquad \forall \btau \in \mathbb{H}^t(\O), \quad t>1/2.
\end{equation}
Also,  for less regular tensorial fields we have the following estimate, see \cite[Theorem 3.16]{hiptmair}
\begin{equation}\label{asymp00}
 \norm{\btau - \bcI_k^h \btau}_{0,\O} \leq C h^t (\norm{\btau}_{t,\O}
 + \norm{\btau}_{\bdiv,\O})  \quad \forall \btau \in \mathbb{H}^t(\O)\cap \H(\div;\O) \quad t\in (0, 1/2].
\end{equation}
Moreover, the following commuting diagram property holds true, see \cite[Lemma 5]{BeiraoVemAcoustic2017}:
\begin{equation}\label{asympDiv}
 \norm{\bdiv (\btau - \bcI_k^h\btau) }_{0,\O} = \norm{\bdiv \btau - \mathcal{P}_{k}^h \bdiv \btau }_{0,\O} 
 \leq C h^{\min\{t, k\}} \norm{\bdiv\btau}_{t,\O},
\end{equation}
for  $\bdiv \btau \in \H^t(\O)^{2}$ and  $\mathcal{P}_{k}^h$ being  the $\LO^2$-orthogonal projection onto $\textsc{P}_{k}$.
Also  we define the local restriction of the interpolant operator as $\btau_I:=\bcI_k^h(\btau)|_E\in \bcW_h^E$.

On the other hand, the discrete counterpart of operator $\bP$ is the operator $\bP_h:\bcW_{h}\rightarrow\bcW_h$, which satisfies  for $s\in(0,1]$, the following error estimate (see \cite[Lemma 4.4]{MMR3})
\begin{equation}
\label{eq:P-Ph}
\|\bP\bsig_h-\bP_h\bsig_h\|_{\mathbb{H}(\bdiv;\O)}\leq Ch^s\|\bsig\|_{\mathbb{H}(\bdiv;\O)}\quad\forall\bsig_h\in\bcW_h.
\end{equation}
Moreover, we have that $\bP_h|_{\bcW_h}$ is idempotent and $\bcW_h=\bcK_h\oplus\bP_h(\bcW_h)$. 

%The following spectral characterization of $\bT_h$
%holds.
%\begin{theorem}
%\label{thm:char_disc}
%The spectrum of $\bT_h$ consists of $M:=\dim(\bcW_h)$ nondefective eigenvalues, 
%repeating according their respective multiplicities. The spectrum decomposes as follows: $\sp(\bT_h)=\{0,1\}\cup\{\mu_{h}^{k}\}_{k=1}^K$. Moreover
%\begin{enumerate}
%\item the eigenspace associated with $\mu_h=1$ is $\bcK_h$;
%\item the eigenspace associated with $\mu_h=0$ is $\boldsymbol{\mathcal{G}}_h:=\{\btau_h\in\bcW_h\,:\,\,\btau_h^{\tD}=\boldsymbol{0}\}$;
%\item the eigenspaces associated with $\mu_{h}^{k}\in(0,1)$, $k=1,\ldots, K:=M-\dim(\bcK_h)-\dim(\boldsymbol{\mathcal{G}}_h)$ lie on $\bP_h(\bcW_h)$.
%\end{enumerate}
%\end{theorem}
%\begin{proof}
%See \cite[Theorem 4.5]{MMR3}.
%\end{proof}
\section{Spectral approximation}
\label{sec:spectral_app}
We begin this section by recalling some definitions of spectral theory. Let $\mathcal{X}$ be a generic Hilbert space and let $\bS$ be a linear bounded operator defined by $\bS:\mathcal{X}\rightarrow\mathcal{X}$. If $\boldsymbol{I}$ represents the identity operator, the spectrum of $\bS$ is defined by $\sp(\bS):=\{z\in\mathbb{C}:\,\,(z\boldsymbol{I}-\bS)\,\,\text{is not invertible} \}$ and the resolvent is its complement $\rho(\bS):=\mathbb{C}\setminus\sp(\bS)$. For any $z\in\rho(\bS)$, we define the resolvent operator of $\bS$ corresponding to $z$ by $R_z(\bS):=(z\boldsymbol{I}-\bS)^{-1}:\mathcal{X}\rightarrow\mathcal{X}$. 

Also, if $\mathcal{X}$ and $\mathcal{Y}$ are vectorial fields, we denote by $\mathcal{L}(\mathcal{X},\mathcal{Y})$ the space of all the linear and bounded operators acting from $\mathcal{X}$ to $\mathcal{Y}$.

The goal of this section is to prove the convergence between the solution operators and hence, the 
corresponding spectrums. To do this task, we resort to the theory of non-compact operators developed on \cite{DNR1}. In order to do this, we introduce some further notations. Let $\bS:\bcW\rightarrow \bcW$ be a bounded linear operator. We define
\begin{equation*}
\|\bS\|_h:=\sup_{\boldsymbol{0}\neq\btau_h\in\bcW_h}\frac{\|\bS\btau_h\|_{\mathbb{H}(\bdiv;\O)}}{\|\btau_h\|_{\mathbb{H}(\bdiv;\O)}}.
\end{equation*}

Let $\boldsymbol{\mathcal{X}}$ and  $\boldsymbol{\mathcal{Y}}$ be two closed subspaces of $\bcW$. We define the gap
$\widehat{\delta}$ between these subspaces by
\begin{equation*}
\widehat{\delta}(\boldsymbol{\mathcal{X}}, \boldsymbol{\mathcal{Y}}):=\max\{\delta(\boldsymbol{\mathcal{X}},\boldsymbol{\mathcal{Y}}), \delta(\boldsymbol{\mathcal{X}}, \boldsymbol{\mathcal{Y}}) \},
\end{equation*}
where
\begin{equation*}
\displaystyle \delta(\boldsymbol{\mathcal{X}},\boldsymbol{\mathcal{Y}}):=\sup_{\bx\in\boldsymbol{\mathcal{X}}}\delta(\bx,\boldsymbol{\mathcal{Y}})\quad\text{with}\,\,\,\delta(\bx,\boldsymbol{\mathcal{Y}}):=\inf_{\boldsymbol{y}\in\boldsymbol{\mathcal{Y}}}\|\bx-\boldsymbol{y}\|_{\mathbb{H}(\bdiv;\O)}.
\end{equation*}

Our next task is to check  the following properties of the non-compact operators theory \cite{DNR1}:
\begin{itemize}
\item P1: $\|\bT-\bT_h\|_h\rightarrow 0$ as $h\rightarrow 0$;
\item P2: $\forall\btau\in\bcW$, $\lim_{h\rightarrow 0}\delta(\btau,\bcW_h)=0$.
\end{itemize}

Since P2 is immediate due \eqref{asymp0} and  \eqref{asympDiv}, we only prove property P1. We begin with 
the following approximation result.
\begin{lemma}
\label{lmm:aproxTTh1}
Let $\bF\in\boldsymbol{P}(\bcW)$. Then, there exists a constant $C>0$ such that 
\begin{equation*}
\|(\bT-\bT_h)\boldsymbol{f}\|_{\mathbb{H}(\bdiv;\O)}\leq Ch^s\|\boldsymbol{f}\|_{\mathbb{H}(\bdiv;\O)}.
\end{equation*}
\end{lemma}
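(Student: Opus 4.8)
The plan is to estimate $\|(\bT-\bT_h)\bF\|_{\mathbb{H}(\bdiv;\O)}$ by a standard Strang-type argument, splitting the error into a best-approximation part and a consistency part coming from the nonconformity of the bilinear form $b_h(\cdot,\cdot)$. Write $\wbsig=\bT\bF$ and $\wbsig_h=\bT_h\bF$. Since $\bF\in\bP(\bcW)$, Proposition~\ref{reg} gives the regularity $\wbsig\in\mathbb{H}^s(\O)$ with $\bdiv\wbsig\in\H^{1+s}(\O)^2$ and the a priori bound $\|\wbsig\|_{s,\O}+\|\bdiv\wbsig\|_{1+s,\O}\le C\|\bF\|_{\mathbb{H}(\bdiv;\O)}$. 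The first step is to introduce the VEM interpolant $\wbsig_I:=\bcI_k^h\wbsig\in\bcW_h$ and use the triangle inequality
\begin{equation*}
\|\wbsig-\wbsig_h\|_{\mathbb{H}(\bdiv;\O)}\le \|\wbsig-\wbsig_I\|_{\mathbb{H}(\bdiv;\O)}+\|\wbsig_I-\wbsig_h\|_{\mathbb{H}(\bdiv;\O)}.
\end{equation*}
The first term is controlled directly by the interpolation estimates \eqref{asymp0} (for the $\L^2$-part, using $s>0$; if $s\le 1/2$ one invokes \eqref{asymp00} instead) and \eqref{asympDiv} (for the divergence part, using $\bdiv\wbsig\in\H^{1+s}(\O)^2$), yielding an $\mathcal{O}(h^s)$ bound times $\|\bF\|_{\mathbb{H}(\bdiv;\O)}$.

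For the discrete error term $\be_h:=\wbsig_h-\wbsig_I\in\bcW_h$, I would use ellipticity of $a_h(\cdot,\cdot)$ on $\bcW_h$ (Lemma~\ref{lmm:ellip_h}): $\widehat{\alpha}\|\be_h\|_{\mathbb{H}(\bdiv;\O)}^2\le a_h(\be_h,\be_h)=a_h(\wbsig_h,\be_h)-a_h(\wbsig_I,\be_h)$. Now use the discrete source equation $a_h(\wbsig_h,\be_h)=b_h(\bF,\be_h)$ and the continuous one $a(\wbsig,\be_h)=b(\bF,\be_h)$, and add and subtract to get
\begin{equation*}
a_h(\be_h,\be_h)=\big[b_h(\bF,\be_h)-b(\bF,\be_h)\big]+\big[a(\wbsig,\be_h)-a_h(\wbsig_I,\be_h)\big].
\end{equation*}
The first bracket is a consistency term for $b_h$: splitting elementwise and using the definition of $b_h^E$ together with property (A.2) and the stabilization bound \eqref{eq:s_stable}, one reduces it to terms of the form $b^E(\bF-\widehat{\Pi}_h^E\bF,\cdot)$ and $S^E(\bF-\widehat{\Pi}_h^E\bF,\cdot)$, each estimated by $\|\bF-\widehat{\Pi}_h^E\bF\|_{0,E}$; since $\bF\in\bP(\bcW)\subset\mathbb{H}^s(\O)$, property (A.3) gives an $\mathcal{O}(h^s)\|\bF\|_{s,\O}$ bound, and $\|\bF\|_{s,\O}\le C\|\bdiv\bF\|_{0,\O}\le C\|\bF\|_{\mathbb{H}(\bdiv;\O)}$ by the properties of $\bP$. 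The second bracket is split as $[a(\wbsig,\be_h)-a(\wbsig_I,\be_h)]+[a(\wbsig_I,\be_h)-a_h(\wbsig_I,\be_h)]$; the first piece is bounded by $\|\wbsig-\wbsig_I\|_{\mathbb{H}(\bdiv;\O)}\|\be_h\|_{\mathbb{H}(\bdiv;\O)}$ (already $\mathcal{O}(h^s)$), while the second is again a $b_h$-consistency term, handled exactly as above but now measuring $\|\wbsig_I-\widehat{\Pi}_h^E\wbsig_I\|_{0,E}$, which one bounds by $\|\wbsig_I-\wbsig\|_{0,E}+\|\wbsig-\widehat{\Pi}_h^E\wbsig\|_{0,E}+\|\widehat{\Pi}_h^E(\wbsig-\wbsig_I)\|_{0,E}$ and thus by $\mathcal{O}(h^s)\|\wbsig\|_{s,\O}$ using interpolation and (A.3) and (A.1).

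Collecting everything, $\widehat{\alpha}\|\be_h\|_{\mathbb{H}(\bdiv;\O)}^2\le Ch^s\|\bF\|_{\mathbb{H}(\bdiv;\O)}\|\be_h\|_{\mathbb{H}(\bdiv;\O)}$, which gives $\|\be_h\|_{\mathbb{H}(\bdiv;\O)}\le Ch^s\|\bF\|_{\mathbb{H}(\bdiv;\O)}$, and the triangle inequality finishes the proof. The main obstacle I anticipate is the careful bookkeeping of the $b_h$-consistency terms: one must make sure that in each comparison between $b_h^E$ and $b^E$ the polynomial projection $\widehat{\Pi}_h^E$ lands where property (A.2) can be applied to kill the cross terms, and that the stabilization contributions $S^E(\cdot-\widehat{\Pi}_h^E\cdot,\cdot-\widehat{\Pi}_h^E\cdot)$ are genuinely of order $h^s$ — this relies on $\widehat{\Pi}_h^E$ reproducing polynomials of degree $k$ so that $S^E$ only sees the approximation error. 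The regularity exponent $s\in(0,1]$ from Proposition~\ref{reg} is exactly what makes all these pieces match up at the same rate $h^s$.
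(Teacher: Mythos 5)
Your proposal follows essentially the same route as the paper's proof: triangle inequality through the VEM interpolant, discrete ellipticity of $a_h$ applied to $\wbsig_h-\wbsig_I$, adding and subtracting the continuous and discrete source equations, and bounding the resulting $b_h$-versus-$b$ consistency terms via Lemma~\ref{lmm:stab}, (A.1)--(A.3) and the interpolation estimates, with the regularity of Proposition~\ref{reg} and the bound $\|\bF\|_{s,\O}\le C\|\bdiv\bF\|_{0,\O}$ supplying the rate $h^s$. The only cosmetic difference is that you bound $a(\wbsig,\cdot)-a(\wbsig_I,\cdot)$ by continuity, whereas the paper cancels the divergence contributions exactly via the commuting diagram property \eqref{asympDiv}; both yield the same $\mathcal{O}(h^s)$ conclusion.
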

\begin{proof}
Let $\boldsymbol{f}\in\boldsymbol{P}(\bcW)$ such that % be chosen as the solution of \eqref{eq:mixto}
 $\widetilde{\bsig}:=\bT\bF$ and $\widetilde{\bsig}_h:=\bT_h\bF$. 
 %Due the first equation of \eqref{eq:system_div}, we have that $\bdiv\bF=\bdiv\bsig$ and, applying  \eqref{eq:estimate_P}, we obtain 
%\begin{equation}\label{eq:bound_f}
%\|\bF\|_{s,\O}\leq C\|\bdiv\bF\|_{0,\O}.
%\end{equation} 
Let $\widetilde{\bsig}_I\in\bcW_h$. We have that
\begin{equation}
\label{eq:T-Th}
\|(\bT-\bT_h)\boldsymbol{f}\|_{\mathbb{H}(\bdiv;\O)}\leq \|\widetilde{\bsig}-\widetilde{\bsig}_{I}\|_{\mathbb{H}(\bdiv;\O)}+\|\widetilde{\bsig}_h-\widetilde{\bsig}_I\|_{\mathbb{H}(\bdiv;\O)}.
\end{equation}
Set $\btau_h=\widetilde{\bsig}_h-\widetilde{\bsig}_I$. Hence, from Lemma \ref{lmm:ellip_h} we have
\begin{align*}
\widehat{\alpha}\|\btau_h\|^2_{\mathbb{H}(\bdiv;\O)}&\leq a_h(\btau_h,\btau_h)=a_h(\widetilde{\bsig}_h,\btau_{h})-a_h(\widetilde{\bsig}_I,\btau_h)\\
                                                              &=b_h(\boldsymbol{f},\btau_h)-a_h(\widetilde{\bsig}_I,\btau_h)\\
                                                              &=b_h(\boldsymbol{f},\btau_h)-b(\boldsymbol{f},\btau_h)+a(\widetilde{\bsig},\btau_h)-a_h(\widetilde{\bsig}_I,\btau_h)\\
       &=b_h(\boldsymbol{f},\btau_h)-b(\boldsymbol{f},\btau_h) +\int_{\O}\bdiv(\widetilde{\bsig}-\widetilde{\bsig}_I)\cdot\bdiv\btau_h\\
       &+\int_{\O}\widetilde{\bsig}^{\tD}:\btau_h^{\tD}-\sum_{E\in\mathcal{T}_h}b_h^E(\widetilde{\bsig}_I,\btau_h)\\                                                        
                                                      &=b_h(\boldsymbol{f},\btau_h)-b(\boldsymbol{f},\btau_h)+\int_{\O}\widetilde{\bsig}^{\tD}:\btau_h^{\tD}\\
                                                      & -\left[\sum_{E\in\mathcal{T}_h}\left\{b_h^E\left(\widetilde{\bsig}_I-\widehat{\Pi}_h^E\widetilde{\bsig},\btau_h\right)
+\int_{E}\left(\widehat{\Pi}_h^E\widetilde{\bsig}\right)^{\tD}:\btau_h^{\tD}\right\} \right],                                                 
\end{align*}
where in the last equality we have used commuting diagram property \eqref{asympDiv} and Lemma \ref{lmm:stab}. 
Then
\begin{align}\nonumber
\widehat{\alpha}\|\btau_h\|^2_{\mathbb{H}(\bdiv;\O)}&\leq                                                         
                                                      b_h(\boldsymbol{f},\btau_h)-b(\boldsymbol{f},\btau_h)\\\label{cotabtau}
                                                      &-\left[\sum_{E\in\mathcal{T}_h}\left\{b_h^E\left(\widetilde{\bsig}_I-\widehat{\Pi}_h^E\widetilde{\bsig},\btau_h\right)
+\int_{E}\left(\left(\widehat{\Pi}_h^E\widetilde{\bsig}\right)^{\tD}-\widetilde{\bsig}^{\tD}\right):\btau_h^{\tD}\right\} \right].    
\end{align}
We observe that
\begin{align*}
\ds b_h(\boldsymbol{f},&\btau_h)-b(\boldsymbol{f},\btau_h)=\sum_{E\in\mathcal{T}_h}\left(b_h^E(\boldsymbol{f}-\widehat{\Pi}_h^E\boldsymbol{f},\btau_h)+\int_{E}\left(\left(\widehat{\Pi}_h^E\boldsymbol{f}\right)^{\tD}-\boldsymbol{f}^{\tD}\right):\btau_h^{\tD}\right).
\end{align*}
%
%\begin{align*}
%\ds b_h(\boldsymbol{f}_h,&\btau_h)-b(\boldsymbol{f}_h,\btau_h)=\sum_{E\in\mathcal{T}_h}\left(b_h^E(\boldsymbol{f}_h,\btau_h)-\int_{E}\boldsymbol{f}_h^{\tD}:\btau_h^{\tD}\right)\\
%&=\sum_{E\in\mathcal{T}_h}\left(\int_{E}\left(\widehat{\Pi}_h^E\boldsymbol{f}\right)^{\tD}:\left(\widehat{\Pi}_h^E\btau\right)^{\tD}+S^E\left(\boldsymbol{f}_h-\widehat{\Pi}_h^E\boldsymbol{f}, \btau_h-\widehat{\Pi}_h^E\btau_h,\right)-\int_{E}\boldsymbol{f}_h^{\tD}:\btau_h^{\tD}\right)\\
%&=\sum_{E\in\mathcal{T}_h}\left(\int_{E}\left(\widehat{\Pi}_h^E\boldsymbol{f}-\boldsymbol{f}\right)^{\tD}:\left(\widehat{\Pi}_h^E\btau\right)^{\tD}+S^E\left(\boldsymbol{f}_h-\widehat{\Pi}_h^E\boldsymbol{f}, \btau_h-\widehat{\Pi}_h^E\btau_h\right)\right).
%\end{align*}

Now, from Lemma \ref{lmm:stab}, (A.3) and Cauchy-Schwarz inequality,  we have
%\begin{align*}
%S^E\left(\boldsymbol{f}_h-\widehat{\Pi}_h^E\boldsymbol{f}, \btau_h-\widehat{\Pi}_h^E\btau_h\right)&\leq c_1\int_E
%\left(\widehat{\Pi}_h^E\boldsymbol{f}-\boldsymbol{f}\right)^{\tD}:\left(\btau-\widehat{\Pi}_h^E\btau\right)^{\tD}\\
%&\leq c_1\left\|\left(\widehat{\Pi}_h^E\boldsymbol{f}-\boldsymbol{f}\right)^{\tD}\right\|_{0,E}\|\btau_h^{\tD}\|_{0,E}\\
%&\leq C\left\|\widehat{\Pi}_h^E\boldsymbol{f}-\boldsymbol{f}\right\|_{0,E}\|\btau_h\|_{0,E}.
%\end{align*}
\begin{align*}
\ds b_h(\boldsymbol{f},\btau_h)-b(\boldsymbol{f},\btau_h)&\leq C\sum_{E\in\mathcal{T}_h}\left\|\widehat{\Pi}_h^E\boldsymbol{f}-\boldsymbol{f}\right\|_{0,E}\|\btau_h\|_{0,E}.
\end{align*}

On the other hand, from Lemma \ref{lmm:stab}, (A.3) and (A.1),  we obtain
\begin{align*}
b_h^E\left(\widetilde{\bsig}_I-\widehat{\Pi}_h^E\widetilde{\bsig},\btau_h\right)&\leq C\left(\|\widetilde{\bsig}_I-\widehat{\Pi}_h^E\widetilde{\bsig}\|_{0,E}\|\btau_h\|_{0,E}+\|\widetilde{\bsig}_I-\widehat{\Pi}_h^E\widetilde{\bsig}_{I}\|_{0,E}\|\btau_h\|_{0,E}\right)\\
&\leq C\left(2\|\widetilde{\bsig}_I-\widehat{\Pi}_h^E\widetilde{\bsig}\|_{0,E}+\|\widehat{\Pi}_h^E(\widetilde{\bsig}-\widetilde{\bsig}_{I})\|_{0,E}\right)\|\btau_h\|_{0,E}\\
&\leq C\left(\|\widetilde{\bsig}-\widehat{\Pi}_h^E\widetilde{\bsig}\|_{0,E}+\|\widetilde{\bsig}-\widetilde{\bsig}_{I}\|_{0,E}\right)\|\btau_h\|_{0,E}.
\end{align*}

Substituting the above estimates in \eqref{cotabtau}, from the \eqref{eq:s_stable} and Cauchy-Schwarz inequality we obtain
\begin{multline}
\nonumber
\widehat{\alpha}\|\btau_h\|^2_{\mathbb{H}(\bdiv;\O)}\leq C\sum_{E\in\mathcal{T}_h} \left(\left\|\widehat{\Pi}_h^E\boldsymbol{f}-\boldsymbol{f}\right\|_{0,E}+\|\widetilde{\bsig}_I-\widetilde{\bsig}\|_{0,E}\right.\\
\left.+\|\widetilde{\bsig}-\widehat{\Pi}_h^E\widetilde{\bsig}\|_{0,E}\right)\|\btau_h\|_{0,E}.                                                       
 \end{multline}
 Then, from \eqref{eq:T-Th} we derive
\begin{multline}
\nonumber\|(\bT-\bT_h)\boldsymbol{f}\|_{\mathbb{H}(\bdiv;\O)}\leq C\sum_{E\in\mathcal{T}_h} \left(\left\|\widehat{\Pi}_h^E\boldsymbol{f}-\boldsymbol{f}\right\|_{0,E}+\|\widetilde{\bsig}_I-\widetilde{\bsig}\|_{0,E}\right.\\
\nonumber\left.+\|\widetilde{\bsig}-\widehat{\Pi}_h^E\widetilde{\bsig}\|_{0,E}\right).
\end{multline}
Finally, the proof follows from \eqref{asymp0}, \eqref{asymp00}, the fact that $\boldsymbol{f}\in\boldsymbol{P}(\bcW)$ and satisfies problem  \eqref{eq:estimate_P}, with data $\bdiv\bsig$, the approximation properties of $\widehat{\Pi}_h^E$ and Proposition \ref{reg}.
\end{proof}

Now we are in position to establish property P1.
\begin{lemma}
\label{lmm:P1}
There exists a positive constant $C$, independent of $h$, such that
\begin{equation*}
\|\bT-\bT_h\|_h\leq Ch^s.
\end{equation*}
\end{lemma}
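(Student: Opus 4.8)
The plan is to leverage the decomposition $\bcW = \bcK \oplus \bP(\bcW)$ together with the fact that on $\bcK$ both $\bT$ and $\bT_h$ reduce nicely, so that the only genuinely nontrivial contribution to $\|\bT-\bT_h\|_h$ comes from the $\bP(\bcW)$-component, which is exactly what Lemma~\ref{lmm:aproxTTh1} controls. Concretely, I would fix $\btau_h \in \bcW_h$ with $\|\btau_h\|_{\mathbb{H}(\bdiv;\O)} = 1$ and split it as $\btau_h = \bP_h\btau_h + (\btau_h - \bP_h\btau_h)$, noting that $\btau_h - \bP_h\btau_h \in \bcK_h \subset \bcK$ and $\bP_h\btau_h \in \bP_h(\bcW_h)$, using $\bcW_h = \bcK_h \oplus \bP_h(\bcW_h)$.

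For the $\bcK_h$-part, since $\bcK_h \subset \bcK$ and $\bT|_{\bcK}$ is the identity, one has $(\bT - \bT_h)(\btau_h - \bP_h\btau_h) = (\btau_h - \bP_h\btau_h) - \bT_h(\btau_h-\bP_h\btau_h)$; I expect that $\bT_h$ also acts as the identity on $\bcK_h$ (this should follow from the characterization of $\bcK_h$ as the $\mu_h=1$ eigenspace of $\bT_h$, analogous to the continuous case), so that this contribution vanishes outright. For the $\bP_h\btau_h$-part, I would write $(\bT-\bT_h)\bP_h\btau_h = (\bT - \bT_h)\bP\btau_h + (\bT-\bT_h)(\bP_h - \bP)\btau_h$. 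The first term is estimated by Lemma~\ref{lmm:aproxTTh1}, since $\bP\btau_h \in \bP(\bcW)$, giving $\le Ch^s\|\bP\btau_h\|_{\mathbb{H}(\bdiv;\O)} \le Ch^s$ by boundedness of $\bP$. The second term is bounded by $(\|\bT\| + \|\bT_h\|)\|(\bP_h-\bP)\btau_h\|_{\mathbb{H}(\bdiv;\O)}$, and here I invoke the uniform boundedness of $\bT_h$ together with estimate~\eqref{eq:P-Ph}, which gives $\|(\bP-\bP_h)\btau_h\|_{\mathbb{H}(\bdiv;\O)} \le Ch^s\|\btau_h\|_{\mathbb{H}(\bdiv;\O)} \le Ch^s$ (with the norm on the right-hand side of \eqref{eq:P-Ph} interpreted appropriately for $\btau_h \in \bcW_h$).

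Collecting the three estimates and taking the supremum over unit $\btau_h \in \bcW_h$ yields $\|\bT - \bT_h\|_h \le Ch^s$, as claimed.

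The main obstacle I anticipate is the bookkeeping around the non-conforming decomposition: $\bP_h$ maps $\bcW_h$ into $\bcW_h$ but $\bP$ is defined on all of $\bcW$, so one must be careful that $\bP\btau_h$ genuinely lands in $\bP(\bcW)$ (it does, by idempotency of $\bP$) so that Lemma~\ref{lmm:aproxTTh1} applies. A second, more technical point is confirming that $\bT_h$ restricted to $\bcK_h$ is the identity; if that is only available as ``$\mu_h = 1$ is an eigenvalue with eigenspace $\bcK_h$'' one still gets $\bT_h\btau_h = \btau_h$ for $\btau_h \in \bcK_h$, which is exactly what is needed, but it is worth stating explicitly. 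Everything else is a routine application of triangle inequalities, the already-established uniform bounds, and the cited approximation estimates.
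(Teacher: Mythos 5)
Your proposal is correct and takes essentially the same route as the paper: the paper also reduces matters to the splitting of $\bF_h$ into its $\bcK_h$-component (where both operators act as the identity) and $\bP_h\bF_h$, arrives at $\|(\bT-\bT_h)\bF_h\|_{\mathbb{H}(\bdiv;\O)}\le C\bigl(\|(\bP_h-\bP)\bF_h\|_{\mathbb{H}(\bdiv;\O)}+\|(\bT-\bT_h)\bP\bF_h\|_{\mathbb{H}(\bdiv;\O)}\bigr)$ by citing the step-by-step argument of the reference, and then bounds the two terms exactly as you do, via \eqref{eq:P-Ph} and Lemma~\ref{lmm:aproxTTh1}. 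The only difference is that you spell out the bookkeeping (including the identity action on $\bcK_h$ and the idempotency of $\bP$) that the paper delegates to the cited proof.
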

\begin{proof}
For any $\bF_{h}\in \bcW_h$ and following step by step the proof in \cite[Lemma 5.1]{MMR3} we have
\begin{align*}
\|(\bT-\bT_{h})\bF_{h}\|_{\mathbb{H}(\bdiv;\O)}\leq C\big(\|(\bP_{h}-\bP)\bF_{h}\|_{\mathbb{H}(\bdiv;\O)}+\|(\bT-\bT_{h})\bP \bF_{h}\|_{\mathbb{H}(\bdiv;\O)}\big).
\end{align*}
For the first term on the right hand side, we invoke \eqref{eq:P-Ph} to obtain
\begin{equation}
\label{eq:term1}
\|(\bP_{h}-\bP)\bF_{h}\|_{\mathbb{H}(\bdiv;\O)}\leq C h^s\|\bF_{h}\|_{\mathbb{H}(\bdiv;\O)},
\end{equation}
and for the second term, we apply Lemma \ref{lmm:aproxTTh1}, which delivers
\begin{equation}
\label{eq:term2}
\|(\bT-\bT_{h})\bP \bF_{h}\|_{\mathbb{H}(\bdiv;\O)} \leq Ch^s\|\bP \bF_{h}\|_{\mathbb{H}(\bdiv;\O)}\leq C\|\bF_{h}\|_{\mathbb{H}(\bdiv;\O)},
\end{equation}
where the last inequality is an implication of Proposition \ref{reg} for $\widetilde{\bsig}=\bT\bP\bF_{h}$. Hence, gathering \eqref{eq:term1} and \eqref{eq:term2} we conclude the proof.

\end{proof}
As a consequence of Lemma \ref{lmm:P1}, the following results corresponding to \cite[Lemma 1 and Theorem 1]{DNR1}  hold true.
\begin{lemma}
\label{lmm:resolv_cont}
Assume that P1 holds true. Let $F\subset\rho(\bT)$ be a closed set. Then, there exist $C>0$ and $h_0$ independent of $h$, such that for $h<h_0$
\begin{equation*}
\ds\sup_{\btau_h\in\bcW_h}\|R_z(\bT_h)\btau_h\|_{\mathbb{H}(\bdiv;\O)}\leq C\|\btau_h\|_{\mathbb{H}(\bdiv;\O)}\quad\forall z\in F.
\end{equation*}
\end{lemma}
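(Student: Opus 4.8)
The plan is to derive this uniform resolvent estimate by a standard contradiction/compactness argument that exploits property P1, exactly in the spirit of \cite[Lemma~1]{DNR1}. Suppose the conclusion fails. Then there exist a sequence $h_n\to 0$, points $z_n\in F$, and normalized elements $\btau_{h_n}\in\bcW_{h_n}$ with $\norm{\btau_{h_n}}_{\mathbb{H}(\bdiv;\O)}=1$ such that $\norm{R_{z_n}(\bT_{h_n})\btau_{h_n}}_{\mathbb{H}(\bdiv;\O)}\to\infty$. Writing $\bw_{h_n}:=R_{z_n}(\bT_{h_n})\btau_{h_n}$ and normalizing by setting $\bv_{h_n}:=\bw_{h_n}/\norm{\bw_{h_n}}_{\mathbb{H}(\bdiv;\O)}$, we obtain $(z_n\bI-\bT_{h_n})\bv_{h_n}=\btau_{h_n}/\norm{\bw_{h_n}}_{\mathbb{H}(\bdiv;\O)}\to\boldsymbol 0$ in $\bcW$. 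Since $F$ is closed and contained in $\rho(\bT)$, and eigenvalues accumulate only at $0$ and $1$, $F$ is bounded and we may pass to a subsequence with $z_n\to z\in F$.

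Next I would bring $\bT$ into play through P1. The key identity is
\begin{equation*}
(z_n\bI-\bT)\bv_{h_n}=(z_n\bI-\bT_{h_n})\bv_{h_n}+(\bT_{h_n}-\bT)\bv_{h_n},
\end{equation*}
and since $\bv_{h_n}\in\bcW_{h_n}$ with unit norm, Lemma~\ref{lmm:P1} gives $\norm{(\bT-\bT_{h_n})\bv_{h_n}}_{\mathbb{H}(\bdiv;\O)}\le\norm{\bT-\bT_{h_n}}_{h_n}\le Ch_n^s\to0$. Combined with the previous paragraph, $(z_n\bI-\bT)\bv_{h_n}\to\boldsymbol 0$. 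Because $z\in\rho(\bT)$, the resolvent $R_z(\bT)$ is bounded, and since $z_n\to z$ we have $\norm{R_{z_n}(\bT)}_{\mathcal L(\bcW,\bcW)}$ bounded for $n$ large (the resolvent set is open and the resolvent norm is locally bounded). Applying $R_{z_n}(\bT)$ to $(z_n\bI-\bT)\bv_{h_n}\to\boldsymbol 0$ yields $\bv_{h_n}\to\boldsymbol 0$ in $\bcW$, contradicting $\norm{\bv_{h_n}}_{\mathbb{H}(\bdiv;\O)}=1$. Hence the asserted uniform bound holds, and the explicit constant $C$ can be taken as, say, twice $\sup_{z\in F}\norm{R_z(\bT)}_{\mathcal L(\bcW,\bcW)}$ for $h$ small enough.

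The one point requiring a little care — and the main obstacle — is the uniformity in $z\in F$: the contradiction argument must produce a single sequence $z_n$, so one should note that $F$ is compact (closed and bounded, the boundedness coming from the spectral characterization in Lemma~\ref{SPEC-CHAR}, which confines $\sp(\bT)$ to $\{0,1\}\cup\{\mu_k\}$ with $\mu_k\to 0$), and that $z\mapsto\norm{R_z(\bT)}_{\mathcal L(\bcW,\bcW)}$ is continuous on $\rho(\bT)$, hence bounded on the compact set $F$. With these observations the subsequence extraction is legitimate and the estimate is genuinely uniform over $F$. The remaining steps are entirely routine, so I would simply remark that this follows the proof of \cite[Lemma~1]{DNR1} verbatim, with P1 supplied by Lemma~\ref{lmm:P1}.
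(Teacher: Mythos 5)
Your overall strategy is the right one and is, in substance, what the paper itself does: the paper proves nothing here but simply invokes \cite[Lemma~1]{DNR1}, whose proof is the perturbation argument you reconstruct, with P1 (Lemma~\ref{lmm:P1}) supplying the smallness of $\|\bT-\bT_h\|_h$. There are, however, two concrete flaws in your write-up. The first is your claim that $F$ is bounded: a closed subset of $\rho(\bT)$ need not be bounded (e.g.\ $F=\{z:\,|z|\ge 2\}$ lies in $\rho(\bT)$ since $\sp(\bT)\subset[0,1]$ by Lemma~\ref{SPEC-CHAR}), so the extraction of a convergent subsequence $z_n\to z\in F$ is unjustified as written. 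What is true, and what the argument actually needs, is that $M:=\sup_{z\in F}\|R_z(\bT)\|_{\mathcal{L}(\bcW,\bcW)}<\infty$; this follows because $z\mapsto\|R_z(\bT)\|_{\mathcal{L}(\bcW,\bcW)}$ is continuous on $\rho(\bT)$ and tends to $0$ as $|z|\to\infty$. The second flaw is that your contradiction hypothesis presupposes that $R_{z_n}(\bT_{h_n})$ exists, whereas part of what must be established is precisely that $z\boldsymbol{I}-\bT_h$ is invertible on $\bcW_h$ for all $z\in F$ and $h$ small; the negation of the lemma also allows $z_n\in\sp(\bT_{h_n})$, a case your argument does not treat (though it is handled by the same computation with a genuine eigenvector and zero right-hand side).

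Both issues disappear if you replace the contradiction argument by the direct estimate of \cite{DNR1}: for $\btau_h\in\bcW_h$ and $z\in F$,
\begin{equation*}
\|(z\boldsymbol{I}-\bT_h)\btau_h\|_{\mathbb{H}(\bdiv;\O)}\ge\|(z\boldsymbol{I}-\bT)\btau_h\|_{\mathbb{H}(\bdiv;\O)}-\|\bT-\bT_h\|_h\|\btau_h\|_{\mathbb{H}(\bdiv;\O)}\ge\bigl(M^{-1}-Ch^s\bigr)\|\btau_h\|_{\mathbb{H}(\bdiv;\O)},
\end{equation*}
using $\|\btau_h\|_{\mathbb{H}(\bdiv;\O)}\le\|R_z(\bT)\|\,\|(z\boldsymbol{I}-\bT)\btau_h\|_{\mathbb{H}(\bdiv;\O)}$ and Lemma~\ref{lmm:P1}. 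For $h$ small enough this gives $\|(z\boldsymbol{I}-\bT_h)\btau_h\|\ge\tfrac{1}{2M}\|\btau_h\|$ uniformly in $z\in F$; since $z\boldsymbol{I}-\bT_h$ maps the finite-dimensional space $\bcW_h$ into itself and $0\notin F$ (so that solutions of $(z\boldsymbol{I}-\bT_h)\bv=\btau_h$ with $\btau_h\in\bcW_h$ lie in $\bcW_h$), injectivity yields invertibility on $\bcW_h$ and the stated bound with $C=2M$, with no compactness or subsequence extraction required.
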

\begin{theorem}
\label{thm:spurious_free}
Let $V\subset\mathbb{C}$ be an open set containing $\sp(\bT)$. Then, there exists $h_0>0$ such that $\sp(\bT_h)\subset V$ for all $h<h_0$.
\end{theorem}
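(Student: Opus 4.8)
The plan is to obtain the statement as a direct consequence of the uniform resolvent bound of Lemma~\ref{lmm:resolv_cont}, after first confining the argument to a compact region of $\mathbb{C}$. Let $M:=\sup_{h>0}\|\bT_h\|_{\mathcal{L}(\bcW)}$, which is finite because $\bT_h$ is uniformly bounded in $h$. Since the spectral radius of a bounded operator never exceeds its norm, $\sp(\bT_h)\subset\overline{B(0,M)}$ for every $h$ (and likewise $\sp(\bT)\subset\overline{B(0,M)}$). Hence every $z\in\mathbb{C}\setminus V$ with $|z|>M$ lies automatically in $\rho(\bT_h)$, and it only remains to show that the compact set
\begin{equation*}
F:=\big(\mathbb{C}\setminus V\big)\cap\overline{B(0,M)}
\end{equation*}
contains no point of $\sp(\bT_h)$ once $h$ is small enough.

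First I would note that $F$ is closed (an intersection of two closed sets) and bounded, hence compact, and that $F\subset\rho(\bT)$: any $z\in F$ lies outside $V$, and since $\sp(\bT)\subset V$ by hypothesis this forces $z\in\rho(\bT)$. Therefore $F$ satisfies the hypotheses of Lemma~\ref{lmm:resolv_cont}, which supplies constants $C>0$ and $h_0>0$, independent of $h$, such that for all $h<h_0$ and all $z\in F$ the resolvent $R_z(\bT_h)$ is well defined on $\bcW$ and
\begin{equation*}
\sup_{\boldsymbol{0}\neq\btau_h\in\bcW_h}\frac{\|R_z(\bT_h)\btau_h\|_{\mathbb{H}(\bdiv;\O)}}{\|\btau_h\|_{\mathbb{H}(\bdiv;\O)}}\leq C .
\end{equation*}
In particular $z\in\rho(\bT_h)$ for every $z\in F$ and $h<h_0$, so $F\cap\sp(\bT_h)=\emptyset$; combined with $\sp(\bT_h)\subset\overline{B(0,M)}$ this yields $\sp(\bT_h)\cap(\mathbb{C}\setminus V)=\emptyset$, that is $\sp(\bT_h)\subset V$ for all $h<h_0$, as claimed. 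One should also keep in mind that $\bT_h$ has finite rank (its range lies in $\bcW_h$), so $0\in\sp(\bT_h)$ always; this is harmless since $0\in\sp(\bT)\subset V$.

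Because Lemma~\ref{lmm:resolv_cont} is itself borrowed from the abstract non-compact theory of \cite{DNR1} (where it appears alongside the spurious-free statement), the present theorem is in essence a formal corollary, and I do not anticipate a genuine obstacle. The only points demanding a little care are the reduction to the compact set $F$, which rests on the uniform bound on $\|\bT_h\|$ and on the fact that the value $z=0\in V$ is automatically covered, and the observation that the resolvent estimate over the finite-dimensional space $\bcW_h$ already controls the full spectrum of $\bT_h$ acting on $\bcW$.
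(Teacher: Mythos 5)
Your argument is correct and is essentially the route the paper takes: the paper simply invokes \cite[Theorem 1]{DNR1} as a consequence of property P1, and what you have written out is precisely the standard deduction of that theorem from the uniform resolvent bound of Lemma~\ref{lmm:resolv_cont} (your extra care with the compact truncation $F$, the finite rank of $\bT_h$, and the point $z=0$ is sound but not a different method). Nothing further is needed.
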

The main consequence of the previous results is that the proposed numerical
method does not introduces spurious eigenvalues. Moreover, according to \cite[Section 2]{DNR1} we have the spectral convergence of $\bT_h$ to $\bT$ as $h$ goes to zero. In fact, if $\mu\in (0,1)$ is an isolated eigenvalue of $\bT$ with multiplicity $m$ and $\mathcal{C}$ is an open circle on the complex plane centered at $\mu$ with boundary $\partial\mathcal{C}$, we have that $\mu$ is the only eigenvalue of $\bT$ lying in $\mathcal{C}$ and $\partial\mathcal{C}\cap\sp(\bT)=\emptyset$. Moreover, from \cite[Section 2]{DNR1} we deduce that for $h$ small enough there exist $m$ eigenvalues $\mu_h^{1},\ldots, \mu_h^{m}$ of $\bT_h$ (according to their respective multiplicities) that lie in $\mathcal{C}$ and hence, the eigenvalues $\mu_h^{i}$, $i=1,\ldots, m$ converge to $\mu$ as $h$ goes to zero.
\begin{remark}
\label{rmrk:inverse_R}
As a consequence of Lemma \ref{lmm:resolv_cont}, there exists a constant $C>0$ that, for $h$ small enough,
\begin{equation*}
\|(z\boldsymbol{I}-\bT_h)\btau_h\|_{\mathbb{H}(\bdiv;\O)}\geq C\|\btau_h\|_{\mathbb{H}(\bdiv;\O)}\quad\forall\btau_h\in\bcW_h, \,\,\forall z\in\partial\mathcal{C}.
\end{equation*}
\end{remark}

\section{error estimates}
\label{sec:error}
The aim of this section  is to obtain error estimates for our numerical method.  To do this task, and since the solution operator $\bT$ is non-compact, we resort to the theory of \cite{DNR2}. 

 We introduce some 
notations and definitions. Let $\mE$ be the eigenspace associated to $\bT$ corresponding to 
$\mu$  and let $\mE_h$ be the  invariant eigenspace associated to $\bT_h$  corresponding
to $\mu_{h}^1,\ldots,\mu_{h}^m$.

Let $\boldsymbol{\mathcal{P}}_h:\mathbb{L}^2(\O)\rightarrow\bcW_h\hookrightarrow\bcW$ be the projector with range $\bcW_h$, defined by the relation
\begin{equation*}
a(\boldsymbol{\mathcal{P}}_h\btau-\btau,\bv_h)=0\quad\forall\bv_h\in\bcW_h.
\end{equation*}
We recall that $a(\cdot,\cdot)$ is an inner product on $\bcW$. Hence, $\|\boldsymbol{\mathcal{P}}_h\btau\|_{\mathbb{H}(\bdiv;\O)}\leq\|\btau\|_{\mathbb{H}(\bdiv;\O)}$.
We define $\widehat{\bT}_h:=\bT_h\boldsymbol{\mathcal{P}}_h:\bcW\rightarrow\bcW_h$. With this operator at hand, we prove the following result (cf. \cite[Lemma 1]{DNR1}).
\begin{lemma}
\label{lemma:resolvent2}
There exist $h_0>0$ and $C>0$ such that
\begin{equation*}
\|R_z(\widehat{\bT}_h)\|_{\mathcal{L}(\bcW,\bcW)}\leq C\quad\forall z\in\partial\mathcal{C}, \quad\forall h\leq h_0.
\end{equation*}
\begin{proof}
See \cite[Lemma 11]{BeiraoVemAcoustic2017}.
\end{proof}
\end{lemma}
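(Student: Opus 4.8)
The plan is to establish the uniform boundedness of the resolvent $R_z(\widehat{\bT}_h)$ on all of $\bcW$ (not merely on $\bcW_h$, as in Lemma~\ref{lmm:resolv_cont}) by combining Lemma~\ref{lmm:resolv_cont} with the fact that $\widehat{\bT}_h = \bT_h\boldsymbol{\mathcal{P}}_h$ factors through $\bcW_h$ and that $\boldsymbol{\mathcal{P}}_h$ is an $a$-orthogonal projection bounded in the $\mathbb{H}(\bdiv;\O)$-norm. First I would fix $z\in\partial\mathcal{C}$, take an arbitrary $\btau\in\bcW$, and set $\bv = R_z(\widehat{\bT}_h)\btau$, so that $(z\boldsymbol{I}-\widehat{\bT}_h)\bv = \btau$. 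Since $\widehat{\bT}_h$ has range contained in $\bcW_h$, writing $\bv = \boldsymbol{\mathcal{P}}_h\bv + (\bv - \boldsymbol{\mathcal{P}}_h\bv)$ one sees that applying $\widehat{\bT}_h$ kills nothing new: the idea is to split the identity $z\bv - \bT_h\boldsymbol{\mathcal{P}}_h\bv = \btau$ into its $\bcW_h$-component and its $a$-orthogonal complement.

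The key algebraic step is this. Apply $\boldsymbol{\mathcal{P}}_h$ to $(z\boldsymbol{I}-\bT_h\boldsymbol{\mathcal{P}}_h)\bv=\btau$; since $\boldsymbol{\mathcal{P}}_h$ fixes $\bcW_h$ and $\bT_h\boldsymbol{\mathcal{P}}_h\bv\in\bcW_h$, we get $z\boldsymbol{\mathcal{P}}_h\bv - \bT_h\boldsymbol{\mathcal{P}}_h\bv = \boldsymbol{\mathcal{P}}_h\btau$, i.e. $(z\boldsymbol{I}-\bT_h)(\boldsymbol{\mathcal{P}}_h\bv) = \boldsymbol{\mathcal{P}}_h\btau$ as an identity inside $\bcW_h$. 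By Remark~\ref{rmrk:inverse_R} (equivalently Lemma~\ref{lmm:resolv_cont}), for $h$ small enough and $z\in\partial\mathcal{C}\subset\rho(\bT)$ we have $\|\boldsymbol{\mathcal{P}}_h\bv\|_{\mathbb{H}(\bdiv;\O)}\leq C\|\boldsymbol{\mathcal{P}}_h\btau\|_{\mathbb{H}(\bdiv;\O)}\leq C\|\btau\|_{\mathbb{H}(\bdiv;\O)}$, using $\|\boldsymbol{\mathcal{P}}_h\|\leq 1$ in the last step. It remains to control the complementary part: from $(z\boldsymbol{I}-\widehat{\bT}_h)\bv=\btau$ we have $z(\bv-\boldsymbol{\mathcal{P}}_h\bv) = \btau - \boldsymbol{\mathcal{P}}_h\btau$ (subtract the projected identity from the full one and use $\widehat{\bT}_h\bv = \bT_h\boldsymbol{\mathcal{P}}_h\bv\in\bcW_h$ so it is unaffected). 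Since $z\in\partial\mathcal{C}$ stays bounded away from $0$ (the circle $\mathcal{C}$ is centered at $\mu\in(0,1)$ with small enough radius that $0\notin\overline{\mathcal{C}}$), we get $\|\bv-\boldsymbol{\mathcal{P}}_h\bv\|_{\mathbb{H}(\bdiv;\O)}\leq |z|^{-1}\|\btau-\boldsymbol{\mathcal{P}}_h\btau\|_{\mathbb{H}(\bdiv;\O)}\leq C\|\btau\|_{\mathbb{H}(\bdiv;\O)}$. Adding the two bounds gives $\|\bv\|_{\mathbb{H}(\bdiv;\O)}\leq C\|\btau\|_{\mathbb{H}(\bdiv;\O)}$ with $C$ independent of $h$ and of $z\in\partial\mathcal{C}$, which is precisely the claim.

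The main obstacle is the bookkeeping around the two identities: one must be careful that $\boldsymbol{\mathcal{P}}_h$ is applied in the $a(\cdot,\cdot)$ inner product while the resolvent bound of Lemma~\ref{lmm:resolv_cont} is stated in the $\mathbb{H}(\bdiv;\O)$-norm, and that these are equivalent on $\bcW$ by Lemma~\ref{elipticCont} together with the continuity of $a(\cdot,\cdot)$, so that $\|\boldsymbol{\mathcal{P}}_h\btau\|_{\mathbb{H}(\bdiv;\O)}\leq\|\btau\|_{\mathbb{H}(\bdiv;\O)}$ holds with constant $1$ (as already noted in the text). The only genuinely quantitative input needed beyond algebra is the lower bound $\inf_{z\in\partial\mathcal{C}}|z|>0$, which holds by the choice of $\mathcal{C}$; everything else is a direct consequence of the already-established discrete resolvent estimate. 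Since the referenced \cite[Lemma 11]{BeiraoVemAcoustic2017} carries out exactly this argument in an abstract non-compact setting, I would simply cite it, as the authors do, after indicating the factorization $\widehat{\bT}_h=\bT_h\boldsymbol{\mathcal{P}}_h$ that reduces the claim to Lemma~\ref{lmm:resolv_cont}.
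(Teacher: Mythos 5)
Your argument is correct and is precisely the proof behind the citation the paper uses: \cite[Lemma 11]{BeiraoVemAcoustic2017} establishes the bound by exactly your block decomposition, controlling $\boldsymbol{\mathcal{P}}_h\bv$ via the discrete resolvent estimate of Lemma~\ref{lmm:resolv_cont} (Remark~\ref{rmrk:inverse_R}) and the $a$-orthogonal complement via $z(\bv-\boldsymbol{\mathcal{P}}_h\bv)=\btau-\boldsymbol{\mathcal{P}}_h\btau$ together with $\inf_{z\in\partial\mathcal{C}}|z|>0$. The only cosmetic point is that you write $\bv=R_z(\widehat{\bT}_h)\btau$ before knowing $z\boldsymbol{I}-\widehat{\bT}_h$ is invertible; the same decomposition read forwards exhibits the inverse explicitly as $\btau\mapsto (z\boldsymbol{I}-\bT_h)^{-1}\boldsymbol{\mathcal{P}}_h\btau+z^{-1}(\boldsymbol{I}-\boldsymbol{\mathcal{P}}_h)\btau$ (using Theorem~\ref{thm:spurious_free} to ensure $\partial\mathcal{C}\subset\rho(\bT_h)$ for small $h$), so nothing essential is missing.
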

We define the spectral projector associated to $\bT$ by
\begin{equation*}
\ds \boldsymbol{F}:=\frac{1}{2\pi i}\int_{\partial\mathcal{C}}R_z(\bT)dz,
\end{equation*}
and the projector of  $\widehat{\bT}_h$ relative to $\mu_{1h},\ldots,\mu_{m(h)h}$
by 
\begin{equation*}
\widehat{\boldsymbol{F}}_h:=\frac{1}{2\pi i}\int_{\partial\mathcal{C}}R_z(\widehat{\bT}_h)dz.
\end{equation*}
With these definitions at hand, and considering the fact that our bilinear forms are not conforming, we prove the following result.
\begin{lemma}
\label{lemma:F_T}
There exist positive constants $C$ and $h_0$ such that, for all $h<h_0$, the following estimates hold
\begin{equation*}
\|(\boldsymbol{F}-\widehat{\boldsymbol{F}}_h)|_{\mE}\|_{\mathbb{H}(\bdiv;\O)}\leq C\|(\bT-\widehat{\bT}_h)|_{\mE}\|_{\mathbb{H}(\bdiv;\O)}\leq Ch^{\min\{r,k\}}.
\end{equation*}
\end{lemma}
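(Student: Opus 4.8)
The plan is to prove the two displayed inequalities in turn: the first is purely abstract and follows from the uniform resolvent bound of Lemma~\ref{lemma:resolvent2}, while the second is the approximation estimate and is obtained by a Strang‑type argument mimicking the proof of Lemma~\ref{lmm:aproxTTh1}.

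\emph{First inequality.} Since $\mu\in(0,1)$ is an isolated, nondefective eigenvalue of $\bT$ (Lemma~\ref{SPEC-CHAR}) and $\mathcal{C}$ is chosen so that $\partial\mathcal{C}\cap\sp(\bT)=\emptyset$ and $\mu$ is the only element of $\sp(\bT)$ inside $\mathcal{C}$, the spectral projector $\boldsymbol{F}$ has range $\mE$; hence $\boldsymbol{F}|_{\mE}=\boldsymbol{I}|_{\mE}$ and $R_z(\bT)\btau=(z-\mu)^{-1}\btau$ for every $\btau\in\mE$. Therefore, for $\btau\in\mE$,
\begin{equation*}
(\boldsymbol{F}-\widehat{\boldsymbol{F}}_h)\btau=\frac{1}{2\pi i}\int_{\partial\mathcal{C}}\bigl(R_z(\bT)-R_z(\widehat{\bT}_h)\bigr)\btau\,dz=\frac{1}{2\pi i}\int_{\partial\mathcal{C}}(z-\mu)^{-1}R_z(\widehat{\bT}_h)(\bT-\widehat{\bT}_h)\btau\,dz,
\end{equation*}
where the second identity uses the resolvent identity $R_z(\bT)-R_z(\widehat{\bT}_h)=R_z(\widehat{\bT}_h)(\bT-\widehat{\bT}_h)R_z(\bT)$. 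Taking $\mathbb{H}(\bdiv;\O)$‑norms, using that $|z-\mu|$ and the length of $\partial\mathcal{C}$ are fixed constants, and invoking Lemma~\ref{lemma:resolvent2} ($\norm{R_z(\widehat{\bT}_h)}_{\mathcal{L}(\bcW,\bcW)}\le C$ on $\partial\mathcal{C}$), one gets $\norm{(\boldsymbol{F}-\widehat{\boldsymbol{F}}_h)\btau}_{\mathbb{H}(\bdiv;\O)}\le C\norm{(\bT-\widehat{\bT}_h)\btau}_{\mathbb{H}(\bdiv;\O)}$; the supremum over $\btau\in\mE$ gives the first inequality.

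\emph{Second inequality.} Fix $\btau\in\mE$, so $\bT\btau=\mu\btau$, and by Proposition~\ref{additionalreg} $\norm{\btau}_{r,\O}+\norm{\bdiv\btau}_{1+r,\O}\le C\norm{\btau}_{\mathbb{H}(\bdiv;\O)}$; in particular $\btau\in\mathbb{H}^r(\O)\cap\H(\div;\O)$, so $\btau_I:=\bcI_k^h\btau\in\bcW_h$ is well defined. Setting $\boldsymbol{\zeta}_h:=\mu\btau_I-\bT_h\boldsymbol{\mathcal{P}}_h\btau\in\bcW_h$, I would split $\norm{(\bT-\widehat{\bT}_h)\btau}_{\mathbb{H}(\bdiv;\O)}\le\mu\norm{\btau-\btau_I}_{\mathbb{H}(\bdiv;\O)}+\norm{\boldsymbol{\zeta}_h}_{\mathbb{H}(\bdiv;\O)}$; the first summand is $O(h^{\min\{r,k\}}\norm{\btau}_{\mathbb{H}(\bdiv;\O)})$ by \eqref{asymp0}, \eqref{asymp00} and \eqref{asympDiv} (the $\bdiv$‑part with $\bdiv\btau\in\H^{1+r}(\O)^2$) together with Proposition~\ref{additionalreg}. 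For $\boldsymbol{\zeta}_h$ I would use the discrete ellipticity of Lemma~\ref{lmm:ellip_h}: starting from $a_h(\bT_h\boldsymbol{\mathcal{P}}_h\btau,\boldsymbol{\zeta}_h)=b_h(\boldsymbol{\mathcal{P}}_h\btau,\boldsymbol{\zeta}_h)$, the splitting $a_h(\btau_I,\boldsymbol{\zeta}_h)=\int_\O\bdiv\btau_I\cdot\bdiv\boldsymbol{\zeta}_h+b_h(\btau_I,\boldsymbol{\zeta}_h)$, the commuting diagram $\bdiv\btau_I=\mathcal{P}_k^h\bdiv\btau$ (so $\int_\O\bdiv(\btau_I-\btau)\cdot\bdiv\boldsymbol{\zeta}_h=0$ because $\bdiv\boldsymbol{\zeta}_h$ is piecewise in $\textbf{\textsc{P}}_k$), and the continuous relation $\mu\,a(\btau,\boldsymbol{\zeta}_h)=a(\bT\btau,\boldsymbol{\zeta}_h)=b(\btau,\boldsymbol{\zeta}_h)$ (which yields $\mu\int_\O\bdiv\btau\cdot\bdiv\boldsymbol{\zeta}_h=(1-\mu)b(\btau,\boldsymbol{\zeta}_h)$), one reaches
\begin{equation*}
\widehat{\alpha}\norm{\boldsymbol{\zeta}_h}_{\mathbb{H}(\bdiv;\O)}^2\le a_h(\boldsymbol{\zeta}_h,\boldsymbol{\zeta}_h)=(1-\mu)\bigl[b(\btau,\boldsymbol{\zeta}_h)-b_h(\btau_I,\boldsymbol{\zeta}_h)\bigr]-b_h(\boldsymbol{\mathcal{P}}_h\btau-\btau_I,\boldsymbol{\zeta}_h).
\end{equation*}

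The two right‑hand terms are consistency errors of the type handled in the proof of Lemma~\ref{lmm:aproxTTh1}. For the first, inserting the piecewise projection $\widehat{\Pi}_h^E\btau\in\mathbb{P}_k(E)$ and using $b_h^E(\boldsymbol{\rho},\cdot)=b^E(\boldsymbol{\rho},\cdot)$ for $\boldsymbol{\rho}\in\mathbb{P}_k(E)$ (Lemma~\ref{lmm:stab}), the stability bounds of Lemma~\ref{lmm:stab} and \eqref{eq:s_stable}, and (A.1)--(A.3), reduces it to terms controlled by $\norm{\btau-\widehat{\Pi}_h^E\btau}_{0,E}$ and $\norm{\btau-\btau_I}_{0,E}$; for the second, Lemma~\ref{lmm:stab} and (A.1) bound it by $C\norm{\boldsymbol{\mathcal{P}}_h\btau-\btau_I}_{0,\O}\norm{\boldsymbol{\zeta}_h}_{0,\O}\le C\bigl(\norm{\btau-\boldsymbol{\mathcal{P}}_h\btau}_{\mathbb{H}(\bdiv;\O)}+\norm{\btau-\btau_I}_{0,\O}\bigr)\norm{\boldsymbol{\zeta}_h}_{\mathbb{H}(\bdiv;\O)}$, where $\norm{\btau-\boldsymbol{\mathcal{P}}_h\btau}_{\mathbb{H}(\bdiv;\O)}$ is a quasi‑best approximation error since $\boldsymbol{\mathcal{P}}_h$ is the $a(\cdot,\cdot)$‑orthogonal projection onto $\bcW_h$ and $a(\cdot,\cdot)$ induces a norm equivalent to $\norm{\cdot}_{\mathbb{H}(\bdiv;\O)}$ (Lemma~\ref{elipticCont}). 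Combining these with (A.3), \eqref{asymp0}, \eqref{asymp00} and Proposition~\ref{additionalreg} gives $\widehat{\alpha}\norm{\boldsymbol{\zeta}_h}_{\mathbb{H}(\bdiv;\O)}^2\le Ch^{\min\{r,k\}}\norm{\btau}_{\mathbb{H}(\bdiv;\O)}\norm{\boldsymbol{\zeta}_h}_{\mathbb{H}(\bdiv;\O)}$, hence $\norm{\boldsymbol{\zeta}_h}_{\mathbb{H}(\bdiv;\O)}\le Ch^{\min\{r,k\}}\norm{\btau}_{\mathbb{H}(\bdiv;\O)}$; together with the bound on $\norm{\btau-\btau_I}_{\mathbb{H}(\bdiv;\O)}$ and the supremum over $\btau\in\mE$, this yields the second inequality.

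I expect the main obstacle to be the careful bookkeeping of the two nonconforming terms $b(\btau,\boldsymbol{\zeta}_h)-b_h(\btau_I,\boldsymbol{\zeta}_h)$ and $b_h(\boldsymbol{\mathcal{P}}_h\btau-\btau_I,\boldsymbol{\zeta}_h)$: one must thread the interpolant $\btau_I$, the $\mathbb{L}^2$‑projection $\widehat{\Pi}_h^E$ and the elliptic projection $\boldsymbol{\mathcal{P}}_h$ through the stability estimates so that every residual carries the factor $h^{\min\{r,k\}}$ in the right norm, essentially replaying the computation of Lemma~\ref{lmm:aproxTTh1} with $\bF=\btau$ and $\widetilde{\bsig}=\mu\btau$ but now exploiting the sharper eigenfunction regularity of Proposition~\ref{additionalreg}. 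By contrast, once Lemma~\ref{lemma:resolvent2} is available the first inequality is routine.
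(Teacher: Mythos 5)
Your proposal is correct and follows essentially the same route as the paper: the first inequality via the resolvent identity together with the uniform bound of Lemma~\ref{lemma:resolvent2}, and the second via a first-Strang-lemma decomposition into a best-approximation term plus two consistency terms controlled with Lemma~\ref{lmm:stab}, (A.1)--(A.3) and the eigenfunction regularity. The only difference is cosmetic: you carry out the Strang argument by hand with the VEM interpolant $\btau_I$ and the commuting-diagram property, whereas the paper invokes the abstract first Strang lemma with the elliptic projection $\boldsymbol{\mathcal{P}}_h$ and imports the consistency estimates from the cited VEM literature.
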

\begin{proof}
The first estimate is a direct consequence of \cite[Lemma 3]{DNR2}, together with 
Lemma \ref{lemma:resolvent2}. For the second estimate, let $\boldsymbol{f}\in\mE$ be such that $\bsig:=\bT\boldsymbol{f}$ and $\bsig_h:=\widehat{\bT}_h\boldsymbol{f}=\bT_h\boldsymbol{\mathcal{P}}_h\boldsymbol{f}$. We recall that
$\boldsymbol{f}\in \mathbb{H}^r(\O)$ with $r>0$. Hence, invoking the first Strang lemma (see \cite[Theorem 4.1.1]{ciarlet}) we have
\begin{align*}
\|\bsig-\bsig_h\|_{\mathbb{H}(\bdiv;\O)}\leq C&\left( \|\bsig-\boldsymbol{\mathcal{P}}_h\bsig\|_{\mathbb{H}(\bdiv;\O)}+\sup_{\btau\in\bcW_h}\frac{|b(\boldsymbol{\mathcal{P}}_h\bsig, \btau_h)-b_h(\boldsymbol{\mathcal{P}}_h\bsig,\btau_h)|}{\|\btau_h \|_{\mathbb{H}(\bdiv;\O)}}\right. \\      
&\left.+\sup_{\btau\in\bcW_h}\frac{|b(\boldsymbol{f}, \btau_h)-b_h(\boldsymbol{\mathcal{P}}_h\boldsymbol{f},\btau_h)|}{\|\btau_h \|_{\mathbb{H}(\bdiv;\O)}}\right).
\end{align*}

Following the proof of \cite[Lemma 12]{BeiraoVemAcoustic2017}, together with  Lemma \ref{lmm:stab}   we obtain the following estimates for the consistency terms
\begin{align*}
|b(\boldsymbol{\mathcal{P}}_h\bsig, \btau_h)-b_h(\boldsymbol{\mathcal{P}}_h\bsig,\btau_h)|&\leq C\left(\|\bsig-\boldsymbol{\mathcal{P}}_h\bsig\|_{0,\O}+\|\bsig-\widehat{\Pi}_h^E\bsig \|_{0,\O} \right)\|\btau_h \|_{\mathbb{H}(\bdiv;\O)},\\
|b(\boldsymbol{f}, \btau_h)-b_h(\boldsymbol{\mathcal{P}}_h\boldsymbol{f},\btau_h)|&\leq C\left(\|\boldsymbol{f}-\boldsymbol{\mathcal{P}}_h\boldsymbol{f}\|_{0,\O}+\|\boldsymbol{f}-\widehat{\Pi}_h^E\boldsymbol{f} \|_{0,\O} \right)\|\btau_h \|_{\mathbb{H}(\bdiv;\O)}.
\end{align*}
Thus, we have
\begin{align*}
\|\bsig-\bsig_h\|_{\mathbb{H}(\bdiv;\O)}\leq C&\left(\|\bsig-\boldsymbol{\mathcal{P}}_h\bsig\|_{0,\O}+\|\bsig-\widehat{\Pi}_h^E\bsig \|_{0,\O}\right.\\
&\left. \|\boldsymbol{f}-\boldsymbol{\mathcal{P}}_h\boldsymbol{f}\|_{0,\O}+\|\boldsymbol{f}-\widehat{\Pi}_h^E\boldsymbol{f} \|_{0,\O}\right),
\end{align*}
which, according to \cite[Lemma 12]{BeiraoVemAcoustic2017}, leads to
\begin{equation*}
\|\bsig-\bsig_h\|_{\mathbb{H}(\bdiv;\O)}\leq C\left(\eta_h+\gamma_h\right)\leq Ch^{\min\{r,k\}},
\end{equation*}
where
\begin{equation*}
\eta_h:=\gap(\mE,\bcW)\leq Ch^{\min\{r,k\}}\quad\text{and}\quad\gamma_h:=\sup_{\bw\in\mE}\frac{\|\bw-\widehat{\Pi}_h^E\bw\|_{0,\O}}{\|\bw\|_{\mathbb{H}(\bdiv;\O)}}\leq Ch^{\min\{r,k\}}.
\end{equation*}
This concludes the proof.
\end{proof}
Let $\mE_h$ be the invariant subspace of $\bT_h$ relative
to the eigenvalues $\mu_{h}^{1},\ldots,\mu_{h}^{m}$ converging to $\mu$. We have the following result.
\begin{lemma}
\label{lemma:op_lam}
Let
\begin{equation*}
\boldsymbol{\Lambda}_h:=\widehat{\bF}_h|_{\mE}:\mE\rightarrow\mE_h.
\end{equation*}
For $h$ small enough, the operator $\boldsymbol{\Lambda}_h$ is invertible and there exists $C$ independent of $h$ such that
\begin{equation*}
\|\boldsymbol{\Lambda}^{-1}\|_{\mathcal{L}(\bcW, \bcW)}\leq C.
\end{equation*} 
\end{lemma}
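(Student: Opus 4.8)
The plan is to prove invertibility of $\boldsymbol{\Lambda}_h=\widehat{\bF}_h|_{\mE}$ by showing that it is ``close to the identity'' when $\mE$ is identified with its image under the spectral projector, so that for $h$ small a Neumann-series argument applies; the uniform bound on $\boldsymbol{\Lambda}_h^{-1}$ then follows. First I would observe that $\widehat{\bF}_h$ is a projector onto $\mE_h$, so $\boldsymbol{\Lambda}_h$ indeed maps $\mE$ into $\mE_h$, and that $\dim\mE=\dim\mE_h=m$ for $h$ small (this is part of the spectral convergence already recorded after Theorem~\ref{thm:spurious_free}, using Lemma~\ref{lmm:P1} and the theory of \cite{DNR1}). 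Since both spaces have the same finite dimension $m$, it suffices to prove that $\boldsymbol{\Lambda}_h$ is injective for $h$ small. To that end, take $\boldsymbol{f}\in\mE$ with $\widehat{\bF}_h\boldsymbol{f}=\boldsymbol{0}$ and estimate
\begin{equation*}
\|\boldsymbol{f}\|_{\mathbb{H}(\bdiv;\O)}=\|\boldsymbol{F}\boldsymbol{f}\|_{\mathbb{H}(\bdiv;\O)}=\|(\boldsymbol{F}-\widehat{\bF}_h)\boldsymbol{f}\|_{\mathbb{H}(\bdiv;\O)}\leq\|(\boldsymbol{F}-\widehat{\bF}_h)|_{\mE}\|_{\mathbb{H}(\bdiv;\O)}\,\|\boldsymbol{f}\|_{\mathbb{H}(\bdiv;\O)},
\end{equation*}
where I used $\boldsymbol{F}\boldsymbol{f}=\boldsymbol{f}$ because $\boldsymbol{f}\in\mE=\boldsymbol{F}(\bcW)$. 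By Lemma~\ref{lemma:F_T}, $\|(\boldsymbol{F}-\widehat{\bF}_h)|_{\mE}\|_{\mathbb{H}(\bdiv;\O)}\leq Ch^{\min\{r,k\}}<1$ for $h$ small enough, which forces $\boldsymbol{f}=\boldsymbol{0}$; hence $\boldsymbol{\Lambda}_h$ is injective and therefore bijective from $\mE$ onto $\mE_h$.

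For the uniform bound on the inverse, I would argue as follows. For any $\boldsymbol{g}_h\in\mE_h$ write $\boldsymbol{g}_h=\boldsymbol{\Lambda}_h\boldsymbol{f}$ with $\boldsymbol{f}=\boldsymbol{\Lambda}_h^{-1}\boldsymbol{g}_h\in\mE$. Then, exactly as above,
\begin{equation*}
\|\boldsymbol{f}\|_{\mathbb{H}(\bdiv;\O)}=\|\boldsymbol{F}\boldsymbol{f}\|_{\mathbb{H}(\bdiv;\O)}\leq\|\widehat{\bF}_h\boldsymbol{f}\|_{\mathbb{H}(\bdiv;\O)}+\|(\boldsymbol{F}-\widehat{\bF}_h)|_{\mE}\|_{\mathbb{H}(\bdiv;\O)}\,\|\boldsymbol{f}\|_{\mathbb{H}(\bdiv;\O)},
\end{equation*}
and since $\widehat{\bF}_h\boldsymbol{f}=\boldsymbol{\Lambda}_h\boldsymbol{f}=\boldsymbol{g}_h$, choosing $h_0$ so that $Ch_0^{\min\{r,k\}}\leq 1/2$ gives $\|\boldsymbol{f}\|_{\mathbb{H}(\bdiv;\O)}\leq 2\|\boldsymbol{g}_h\|_{\mathbb{H}(\bdiv;\O)}$ for $h<h_0$, i.e. $\|\boldsymbol{\Lambda}_h^{-1}\|_{\mathcal{L}(\bcW,\bcW)}\leq 2=:C$, independent of $h$.

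The main obstacle — really the only nontrivial ingredient — is the estimate $\|(\boldsymbol{F}-\widehat{\bF}_h)|_{\mE}\|_{\mathbb{H}(\bdiv;\O)}\to 0$, but this has already been established in Lemma~\ref{lemma:F_T}, so in this excerpt the argument reduces to the bookkeeping above. A minor point to handle carefully is that $\boldsymbol{F}$ and $\widehat{\bF}_h$ are defined via the same contour $\partial\mathcal{C}$ and that $\mE_h=\widehat{\bF}_h(\bcW)$ has dimension $m$ for $h$ small; both facts are consequences of Lemma~\ref{lemma:resolvent2} and the standard spectral-perturbation results of \cite{DNR1,DNR2} already invoked in the paper. (Strictly, the displayed bound should be written $\|\boldsymbol{\Lambda}_h^{-1}\|$, matching the operator $\boldsymbol{\Lambda}_h$ just introduced.)
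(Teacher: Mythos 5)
Your argument is correct: injectivity of $\boldsymbol{\Lambda}_h$ from $\|(\boldsymbol{F}-\widehat{\boldsymbol{F}}_h)|_{\mE}\|<1$ via Lemma~\ref{lemma:F_T}, surjectivity from $\dim\mE=\dim\mE_h=m$, and the uniform bound $\|\boldsymbol{\Lambda}_h^{-1}\|\leq 2$ by the same perturbation estimate. This is essentially the same (standard) argument as the paper, which simply delegates the proof to \cite[Lemma 13]{BeiraoVemAcoustic2017}; your write-up is a correct self-contained version of that cited proof.
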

\begin{proof}
See \cite[Lemma 13]{BeiraoVemAcoustic2017}.
\end{proof}
Now we are in position to establish error estimates for the approximation of the eigenspaces. 
\begin{theorem}
\label{thm:error_spaces}
There exists $C>0$ such that
\begin{equation*}
\widehat{\delta}(\mE,\mE_h)\leq Ch^{\min\{r,k\}}.
\end{equation*}
\end{theorem}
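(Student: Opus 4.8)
The plan is to bound the gap $\widehat{\delta}(\mE,\mE_h)$ by splitting it into its two constituent pieces $\delta(\mE,\mE_h)$ and $\delta(\mE_h,\mE)$ and estimating each separately using the spectral projectors $\boldsymbol{F}$ and $\widehat{\boldsymbol{F}}_h$ together with the operator $\boldsymbol{\Lambda}_h:=\widehat{\boldsymbol{F}}_h|_{\mE}$ from Lemma~\ref{lemma:op_lam}. First I would handle $\delta(\mE,\mE_h)$: for $\bw\in\mE$ with $\|\bw\|_{\mathbb{H}(\bdiv;\O)}=1$, since $\boldsymbol{\Lambda}_h\bw\in\mE_h$, we have $\delta(\bw,\mE_h)\le\|\bw-\boldsymbol{\Lambda}_h\bw\|_{\mathbb{H}(\bdiv;\O)}=\|(\boldsymbol{F}-\widehat{\boldsymbol{F}}_h)\bw\|_{\mathbb{H}(\bdiv;\O)}$ (using $\boldsymbol{F}\bw=\bw$ since $\bw\in\mE$ and $\boldsymbol{F}$ is the spectral projector onto $\mE$), and this is bounded by $\|(\boldsymbol{F}-\widehat{\boldsymbol{F}}_h)|_{\mE}\|_{\mathbb{H}(\bdiv;\O)}\le Ch^{\min\{r,k\}}$ directly by Lemma~\ref{lemma:F_T}. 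Taking the supremum over such $\bw$ gives $\delta(\mE,\mE_h)\le Ch^{\min\{r,k\}}$.

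Next I would handle the reverse direction $\delta(\mE_h,\mE)$. The standard argument (as in \cite{DNR2,BeiraoVemAcoustic2017}) is: given $\bw_h\in\mE_h$ with $\|\bw_h\|_{\mathbb{H}(\bdiv;\O)}=1$, use the invertibility of $\boldsymbol{\Lambda}_h$ to write $\bw_h=\boldsymbol{\Lambda}_h\boldsymbol{\Lambda}_h^{-1}\bw_h=\widehat{\boldsymbol{F}}_h\boldsymbol{\Lambda}_h^{-1}\bw_h$, set $\boldsymbol{v}:=\boldsymbol{F}\boldsymbol{\Lambda}_h^{-1}\bw_h\in\mE$, and estimate
\begin{equation*}
\delta(\bw_h,\mE)\le\|\bw_h-\boldsymbol{v}\|_{\mathbb{H}(\bdiv;\O)}=\|(\widehat{\boldsymbol{F}}_h-\boldsymbol{F})\boldsymbol{\Lambda}_h^{-1}\bw_h\|_{\mathbb{H}(\bdiv;\O)}\le\|(\boldsymbol{F}-\widehat{\boldsymbol{F}}_h)|_{\mE}\|_{\mathbb{H}(\bdiv;\O)}\,\|\boldsymbol{\Lambda}_h^{-1}\|_{\mathcal{L}(\bcW,\bcW)}.
\end{equation*}
Here $\boldsymbol{\Lambda}_h^{-1}\bw_h\in\mE$, so applying $(\boldsymbol{F}-\widehat{\boldsymbol{F}}_h)$ to it is controlled by the restricted norm from Lemma~\ref{lemma:F_T}, and the uniform bound $\|\boldsymbol{\Lambda}_h^{-1}\|_{\mathcal{L}(\bcW,\bcW)}\le C$ comes from Lemma~\ref{lemma:op_lam}. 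This yields $\delta(\mE_h,\mE)\le Ch^{\min\{r,k\}}$ as well, and combining with the first bound gives $\widehat{\delta}(\mE,\mE_h)=\max\{\delta(\mE,\mE_h),\delta(\mE_h,\mE)\}\le Ch^{\min\{r,k\}}$.

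The substantive work has essentially been front-loaded into Lemma~\ref{lemma:F_T} (the convergence in norm of the spectral projectors, via the first Strang lemma and the consistency estimates for $b_h$) and Lemma~\ref{lemma:op_lam} (invertibility of $\boldsymbol{\Lambda}_h$ with uniform inverse bound), so the proof of Theorem~\ref{thm:error_spaces} itself is a short assembly. The only point requiring a little care is making sure one uses $\mE$-valued arguments whenever invoking the restricted-norm estimate $\|(\boldsymbol{F}-\widehat{\boldsymbol{F}}_h)|_{\mE}\|$ — in the reverse direction this is why one routes through $\boldsymbol{\Lambda}_h^{-1}\bw_h\in\mE$ rather than applying the operator difference directly to $\bw_h\in\mE_h$. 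I would present the two directions in sequence and conclude by taking the maximum; I do not anticipate any genuine obstacle beyond bookkeeping, since the hard analytic estimates are already available.
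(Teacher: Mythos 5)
Your proposal is correct and is essentially the paper's own proof: the paper simply states that the result "follows from Lemmas \ref{lemma:F_T} and \ref{lemma:op_lam} and runs identically as in \cite[Theorem 1]{DNR2}," and the argument you spell out (bounding $\delta(\mE,\mE_h)$ via $\boldsymbol{\Lambda}_h\bw=\widehat{\boldsymbol{F}}_h\bw$ and $\delta(\mE_h,\mE)$ by routing through $\boldsymbol{\Lambda}_h^{-1}\bw_h\in\mE$ with the uniform inverse bound) is exactly that cited argument. No discrepancy to report.
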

\begin{proof}
The proof follows from Lemmas \ref{lemma:F_T} and \ref{lemma:op_lam}, and runs identically as in  \cite[Theorem 1]{DNR2}.
\end{proof}
We end this section with the following theorem which establishes the double order of convergence for
the eigenvalues. To this end, we note that the error estimate for the eigenvalue $\mu$ of $\bT$ leads to an analogous estimate for the approximation of the eigenvalue $\l= \dfrac{1}{\mu}$ of \eqref{spect1} with eigenspace $\mE$. Let  $\l_{h}^{(i)}=\dfrac{1}{\mu_{h}^{(i)}}$, $1\leq  i\leq m$ be the eigenvalues of \eqref{eq:disc_prob}  with invariant subspace $\mE_{h}$. Therefore we have the following result.
\begin{theorem}
\label{thm:double:order}
There exist positive constants $C$ and $h_0$, such that for $h<h_0$
\begin{equation*}
|\lambda-\lambda_h^{(i)}|\leq Ch^{2\min\{r,k\}},\qquad i=1,\ldots, m.
\end{equation*}
\end{theorem}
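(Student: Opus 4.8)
The plan is to follow the classical spectral-approximation argument for the double order of convergence of eigenvalues, adapted to the nonconforming setting as in \cite{DNR2}. The starting point is the standard identity for self-adjoint (with respect to $a_h$) discrete operators: picking a basis of the discrete invariant subspace and an orthonormal-type decomposition, one reduces the analysis of $|\mu-\mu_h^{(i)}|$ to estimating the quantity $a((\bT-\widehat{\bT}_h)\bw,\bw)$ (plus lower-order terms) for $\bw$ ranging over the continuous eigenspace $\mE$ with $\|\bw\|_{\mathbb{H}(\bdiv;\O)}=1$. Since $\l=1/\mu$ and $\l_h^{(i)}=1/\mu_h^{(i)}$, an estimate $|\mu-\mu_h^{(i)}|\le Ch^{2\min\{r,k\}}$ transfers directly to $|\l-\l_h^{(i)}|$ because $\mu$ is isolated and bounded away from $0$, so the eigenvalues $\mu_h^{(i)}$ are uniformly bounded away from $0$ for $h$ small.

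First I would write, for a normalized eigenfunction $\bsig\in\mE$ with $\bT\bsig=\mu\bsig$ and its discrete counterpart $\bsig_h=\widehat{\bT}_h\bsig\in\mE_h$, the error identity
\begin{equation*}
\mu-\mu_h^{(i)} = \frac{a_h\big((\bT_h-\widehat{\bT}_h)\bsig,\bsig_h\big)}{a_h(\bsig_h,\bsig_h)} + (\text{consistency terms}),
\end{equation*}
expanding $a-a_h$ and $b-b_h$ exactly as in the proof of Lemma~\ref{lemma:F_T}. The key structural point is that $a$ and $a_h$ differ only through $b$ and $b_h$, and the consistency errors $|b(\btau,\bw_h)-b_h(\Pi\text{-type terms})|$ are controlled, via Lemma~\ref{lmm:stab} and property (A.3), by $\|\bsig-\widehat{\Pi}_h^E\bsig\|_{0,\O}$, $\|\bsig-\boldsymbol{\mathcal{P}}_h\bsig\|_{0,\O}$ and their analogues for $\bsig=\mu\,\boldsymbol{f}$. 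By Proposition~\ref{additionalreg} the eigenfunction satisfies $\|\bsig\|_{r,\O}+\|\bdiv\bsig\|_{1+r,\O}\le C$, so each such term is $O(h^{\min\{r,k\}})$. Then I would bound the numerator by a product of two such factors — one coming from $\bsig-\widehat{\bT}_h\bsig$ (Lemma~\ref{lemma:F_T}, giving $h^{\min\{r,k\}}$) and one from the test direction measured through the consistency terms (again $h^{\min\{r,k\}}$) — producing the squared power $h^{2\min\{r,k\}}$.

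The technical heart, and the step I expect to be the main obstacle, is handling the \emph{nonconformity}: the standard symmetric argument that yields double order rests on $a$ being used on both slots, but here the discrete problem uses $a_h$, so cross terms of the form $a(\bsig,\btau_h)-a_h(\bsig_h,\btau_h)$ do not vanish and must be split carefully into (i) a genuine approximation part, estimated by $\widehat{\delta}(\mE,\bcW_h)$ and the interpolation/commuting-diagram estimates \eqref{asymp0}, \eqref{asymp00}, \eqref{asympDiv}, and (ii) a variational-crime part $b_h-b$, estimated via Lemma~\ref{lmm:stab} and (A.1)--(A.3). One must check that every term in this split is either quadratically small or exactly cancels against the Rayleigh-quotient denominator — this bookkeeping is precisely what \cite{DNR2} and \cite[Lemma~12]{BeiraoVemAcoustic2017} provide, so I would invoke those lemmas after verifying that our forms satisfy their hypotheses.

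To conclude, I would combine these pieces: using Lemma~\ref{lemma:op_lam} to guarantee $\boldsymbol{\Lambda}_h$ is invertible with uniformly bounded inverse, Theorem~\ref{thm:error_spaces} for $\widehat{\delta}(\mE,\mE_h)\le Ch^{\min\{r,k\}}$, and the error identity above, one gets $|\mu-\mu_h^{(i)}|\le C\big(\|(\bT-\widehat{\bT}_h)|_{\mE}\|^2 + \gamma_h\,\|(\bT-\widehat{\bT}_h)|_{\mE}\| + \gamma_h^2\big)\le Ch^{2\min\{r,k\}}$, where $\gamma_h$ is the quantity from Lemma~\ref{lemma:F_T}. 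Finally, writing $\l-\l_h^{(i)} = (\mu_h^{(i)}-\mu)/(\mu\,\mu_h^{(i)})$ and using $\mu_h^{(i)}\to\mu\in(0,1)$, the same bound holds for $|\l-\l_h^{(i)}|$, which is the assertion.
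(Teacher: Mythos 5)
Your plan is workable in outline, but it takes a genuinely different (and heavier) route than the paper, and it leaves unverified the one step that actually produces the doubled exponent. The paper does not pass through the operator-level bound $|\mu-\mu_h^{(i)}|\le C\bigl(\max_{i,j}|a((\bT-\widehat{\bT}_h)\phi_i,\phi_j)|+\|(\bT-\widehat{\bT}_h)|_{\mE}\|^2\bigr)$ at all. Instead it argues directly on the two variational eigenvalue problems: it takes a discrete eigenfunction $\bsig_h\in\mE_h$, uses Theorem \ref{thm:error_spaces} to find $\bsig\in\mE$ with $\|\bsig-\bsig_h\|_{\mathbb{H}(\bdiv;\O)}\le Ch^{\min\{r,k\}}$, and exploits symmetry together with $a(\bsig,\cdot)=\l\, b(\bsig,\cdot)$ and $a_h(\bsig_h,\cdot)=\l_h^{(i)}b_h(\bsig_h,\cdot)$ to obtain the exact identity
\begin{equation*}
(\l_h^{(i)}-\l)\,b_h(\bsig_h,\bsig_h)=a(\bsig-\bsig_h,\bsig-\bsig_h)-\l\, b(\bsig-\bsig_h,\bsig-\bsig_h)-\l\boldsymbol{\Lambda}_1-\boldsymbol{\Lambda}_2,
\end{equation*}
with $\boldsymbol{\Lambda}_1=b_h(\bsig_h,\bsig_h)-b(\bsig_h,\bsig_h)$ and $\boldsymbol{\Lambda}_2=a(\bsig_h,\bsig_h)-a_h(\bsig_h,\bsig_h)$. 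The first two terms on the right are manifestly quadratic in $\|\bsig-\bsig_h\|_{\mathbb{H}(\bdiv;\O)}$; the consistency terms are quadratic in $\|\bsig_h-\widehat{\Pi}_h^E\bsig_h\|_{0,E}$ thanks to the polynomial consistency of Lemma \ref{lmm:stab}; and $b_h(\bsig_h,\bsig_h)$ is bounded below via Lemma \ref{lmm:ellip_h} and $\l_h^{(i)}\to\l$. This yields a short, self-contained proof with no resolvent calculus and no need to make $\widehat{\bT}_h$ interact with the continuous inner product.

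The genuine gap in your version is at its critical point: showing that the ``first-order'' term $a((\bT-\widehat{\bT}_h)\bw,\bw)$, $\bw\in\mE$, is in fact $O(h^{2\min\{r,k\}})$ is exactly where the nonconformity bites, and you dispose of it by asserting that the bookkeeping is ``precisely what \cite{DNR2} and \cite[Lemma~12]{BeiraoVemAcoustic2017} provide.'' It is not: $\widehat{\bT}_h=\bT_h\boldsymbol{\mathcal{P}}_h$ is self-adjoint with respect to $a_h$, not with respect to $a$, so the duality trick that converts this term into a product of two approximation errors does not apply verbatim; one must write out the correction terms and prove they are quadratic, not merely linear, in the consistency error $\|\cdot-\widehat{\Pi}_h^E\cdot\|_{0,E}$ --- which is exactly the content the paper supplies through $\boldsymbol{\Lambda}_1$ and $\boldsymbol{\Lambda}_2$. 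In addition, your displayed identity with $a_h\bigl((\bT_h-\widehat{\bT}_h)\bsig,\bsig_h\bigr)$ in the numerator is not the right quantity: $\bT_h-\widehat{\bT}_h=\bT_h(\boldsymbol{I}-\boldsymbol{\mathcal{P}}_h)$ is only first-order small and carries no extra orthogonality once composed with $\bT_h$; presumably you meant $\bT-\widehat{\bT}_h$. If you carry out the splitting you describe to the end, you will reproduce, term by term, the paper's direct identity --- which suggests starting from that identity in the first place.
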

\begin{proof}
Let $\bsig_h\in\mE_h$ be an eigenfunction corresponding  to one of the eigenvalues
$\lambda_h^{(i)}$ with $i=1,\ldots, m$ and $\|\bsig_h\|_{\mathbb{H}(\bdiv;\O)}$. Since $\delta(\bsig_h,\mE)\leq Ch^{\min\{r,k\}}$, there exists $\bsig\in\mE$ such that
\begin{equation}
\label{eq:estima_div}
\|\bsig-\bsig_h\|_{\mathbb{H}(\bdiv;\O)}\leq Ch^{\min\{r,k\}}.
\end{equation}
Since $a(\cdot, \cdot)$, $b(\cdot, \cdot)$ and $b_h(\cdot, \cdot)$ are symmetric and
$\bsig$ and $\bsig_h$ solves \eqref{spect1} and \eqref{eq:disc_prob}, respectively, we have
\begin{align*}
a(\bsig-\bsig_h, \bsig-\bsig_h)-\lambda b(\bsig-\bsig_h,\bsig-\bsig_h)&=a(\bsig_h,\bsig_h)-\lambda b(\bsig_h,\bsig_h)\\
&=\l\boldsymbol{\Lambda}_1+\boldsymbol{\Lambda}_2+(\lambda_h^{(i)}-\lambda)b_{h}(\bsig_h,\bsig_h),
\end{align*}
where
\begin{equation*}
\boldsymbol{\Lambda}_1:=[b_h(\bsig_h,\bsig_h)-b(\bsig_h,\bsig_h)]\quad\text{and}\quad
\boldsymbol{\Lambda}_2:=[a(\bsig_h,\bsig_h)-a_h(\bsig_h,\bsig_h)].
\end{equation*}
Hence, we have the following identity
\begin{equation}
\label{eq:padra}
(\lambda_h^{(i)}-\lambda)b_{h}(\bsig_h,\bsig_h)=a(\bsig-\bsig_h,\bsig-\bsig_h)-\lambda b(\bsig-\bsig_h,\bsig-\bsig_h)-\l\boldsymbol{\Lambda}_1-\boldsymbol{\Lambda}_2, 
\end{equation}
where we need to estimate each of the contributions on the right hand side of \eqref{eq:padra}. We begin with the first two terms. 

Since $\bsig^{\tD}-\btau^{\tD}=(\bsig-\btau)^{\tD}$ and $\|\tr(\btau)\|_{0,\O}\leq\sqrt{2}\|\btau\|_{0,\O}$, we have
\begin{align}
\nonumber\left|\int_{\O}(\div(\bsig-\bsig_h))^2+(1-\lambda)\int_{\O}(\bsig^{\tD}-\bsig_h^{\tD})^2\right|\\
&\hspace{-1.5cm}\leq\|\bdiv(\bsig-\bsig_h)\|_{0,\O}^2+
|1-\lambda|\|\bsig^{\tD}-\bsig_h^{\tD}\|_{0,\O}^2\nonumber\\
&\hspace{-1.5cm}\leq C\|\bsig-\bsig_h\|_{\mathbb{H}(\bdiv;\O)}^2\leq Ch^{2\min\{r,k\}},\label{eq:estimaA}
\end{align}
where we have used \eqref{eq:estima_div}. Now using Lemma \ref{lmm:stab} we estimate $\boldsymbol{\Lambda}_1$ as follows
\begin{align*}
|b_h&(\bsig_h,\bsig_h)-b(\bsig_h,\bsig_h)|=\left|\sum_{E\in\mathcal{T}_h}\left(b_{h}^{E}(\bsig_h-\widehat{\Pi}_h^E\bsig_h,\bsig_h-\widehat{\Pi}_h^E\bsig_h)\right.\right.\\
&\hspace{3.5cm}\left.\left.-b^{E}(\bsig_h-\widehat{\Pi}_h^E\bsig_h,\bsig_h-\widehat{\Pi}_h^E\bsig_h)\right)\right|\\
%\ds&=\left|\sum_{E\in\mathcal{T}_h}\left(\int_E\left\{\left(\widehat{\Pi}_h^E\bsig_h\right)^{\tD}\right\}^2+S^E\left(\bsig_h-\widehat{\Pi}_h^E\bsig_h,\bsig_h-\widehat{\Pi}_h^E\bsig_h\right)\right)-\sum_{E\in\mathcal{T}_h}\int_E(\bsig_h^{\tD})^2\right|\\
&\leq\sum_{E\in\mathcal{T}_h}2\alpha_2\left\|\bsig_h-\widehat{\Pi}_h^E\bsig_h\right\|_{0,E}^{2}
+\sum_{E\in\mathcal{T}_h}\int_E\left\{\left(\bsig-\widehat{\Pi}_h^E\bsig_h \right)^{\tD}\right\}^2\\
&=2\alpha_2\sum_{E\in\mathcal{T}_h}\left\|\bsig_h-\widehat{\Pi}_h^E\bsig_h\right\|_{0,E}^2+\sum_{E\in\mathcal{T}_h}\left\|\left(\bsig_h-\widehat{\Pi}_h^E\bsig_h\right)^{\tD}  \right\|_{0,E}^2\\
& \leq C\left\| \bsig_h-\widehat{\Pi}_h^E\bsig_h\right\|_{0,\O}^2\\
&\leq C\left(\|\bsig-\bsig_h\|_{0,\O}^2+\left\|\bsig-\widehat{\Pi}_h^E\bsig\right\|_{0,\O}^2+\left\|\widehat{\Pi}_h^E(\bsig-\bsig_h)\right\|_{0,\O}^2 \right)\leq Ch^{2\min\{r,k\}},
\end{align*}
where we have used the definition of $b_h(\cdot,\cdot)$, the fact that $\widehat{\Pi}_h^E$ is a projection, \eqref{eq:s_stable} and \eqref{eq:estima_div}.

 We now  estimate $\boldsymbol{\Lambda}_2$. To do this task, we use the definition of each bilinear form, elementwise, as follows
 \begin{align*}
|a_h&(\bsig_h,\bsig_h)-a(\bsig_h,\bsig_h)|=|b_h(\bsig_h,\bsig_h)-b(\bsig_h,\bsig_h)|\\
& \leq C\left\| \bsig_h-\widehat{\Pi}_h^E\bsig_h\right\|_{0,\O}^2\\
&\leq C\left(\|\bsig-\bsig_h\|_{0,\O}^2+\left\|\bsig-\widehat{\Pi}_h^E\bsig\right\|_{0,\O}^2+\left\|\widehat{\Pi}_h^E(\bsig-\bsig_h)\right\|_{0,\O}^2 \right)\leq Ch^{2\min\{r,k\}},
 \end{align*}
 Hence, by  following the same steps that leads to the estimate of $\boldsymbol{\Lambda}_1$, we obtain that
 \begin{equation*}
 |a_h(\bsig_h,\bsig_h)-a(\bsig_h,\bsig_h)|\leq Ch^{2\min\{r,k\}}.
 \end{equation*}
 On the other hand, since $\lambda_h^{(i)}\rightarrow \lambda$ as $h$ goes to zero and Lemma \ref{lmm:ellip_h},  we have
 \begin{equation}
 \label{eq:estimaB}
 \ds b_h(\bsig_h,\bsig_h)\geq\frac{a_h(\bsig_h,\bsig_h)}{\lambda_h^{(i)}}\geq \widehat{\alpha}\frac{\|\bsig_h\|_{\mathbb{H}(\bdiv;\O)}^2}{\lambda_h^{(i)}}=\widehat{C}>0.
 \end{equation}
 Finally, gathering \eqref{eq:estimaA}, the bounds of $\boldsymbol{\Lambda}_1$ and $\boldsymbol{\Lambda}_2$, and \eqref{eq:estimaB}, we conclude the proof.
\end{proof}
\section{Numerical results}
\label{section:5}
In the following section we report numerical examples in order to asses the performance
of our numerical method. For all the experiments we have considered the lowest order  polynomials ($k=0$). 
We present tests in different domains
where we compute eigenvalues whit different polygonal meshes and orders of convergence. To do this task,  the computational domains that we will consider
are two different squares, each of them with different boundary conditions, and a L-shaped domain. 
All the reported results have been obtained with a MATLAB code. Also, in each table we show in the column 'Extr.', extrapolated values obtained with a least-square fitting which we compare with the values of some particular references located in the last column of every table.

In Figure \ref{FIG:meshes} we present the meshes that we will consider for our tests.

%\vspace{-2cm}
\begin{figure}[H]
	\begin{center}
		\begin{minipage}{13cm}
			\centering\includegraphics[height=3cm, width=3cm]{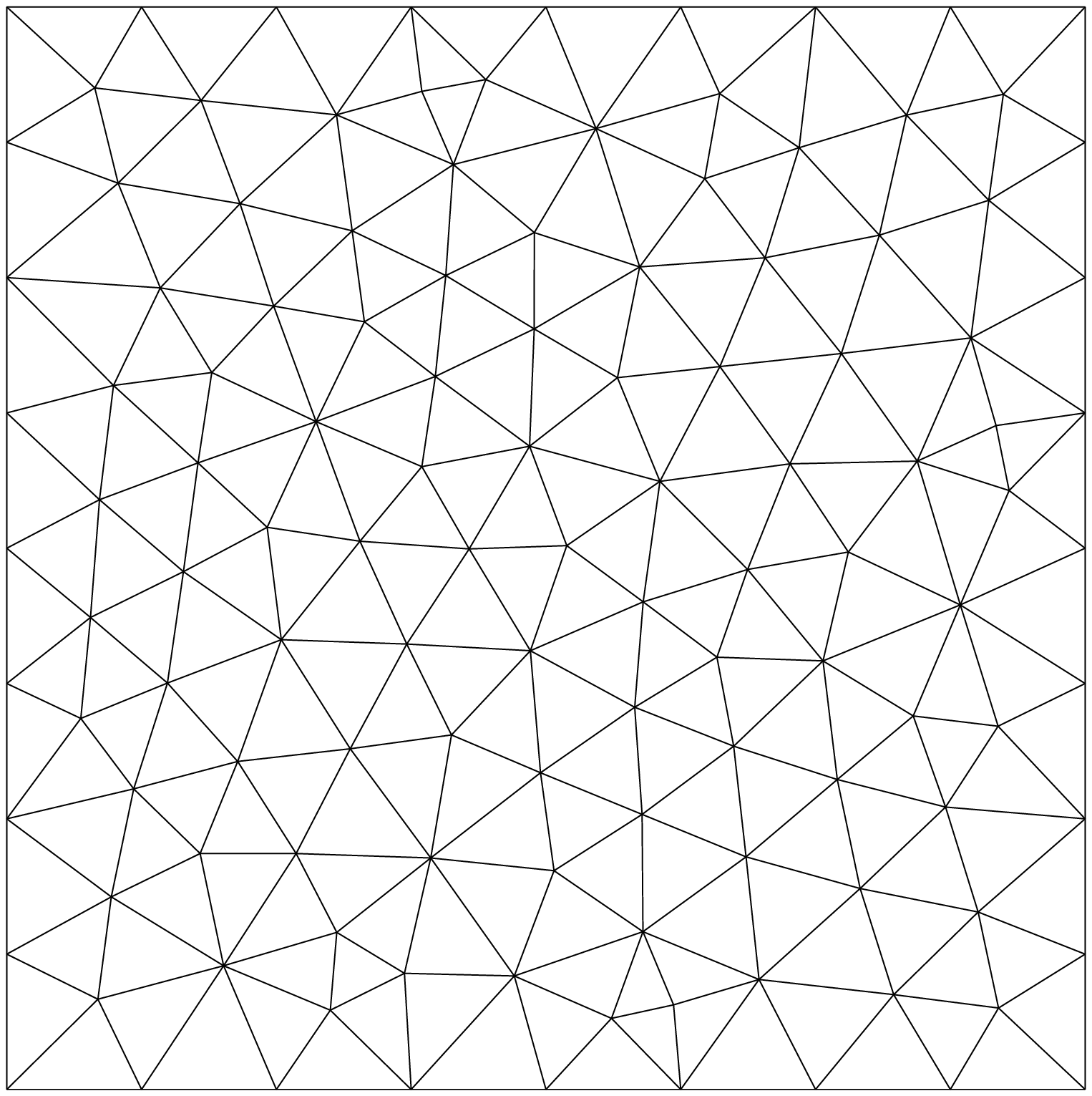}
			\centering\includegraphics[height=3cm, width=3cm]{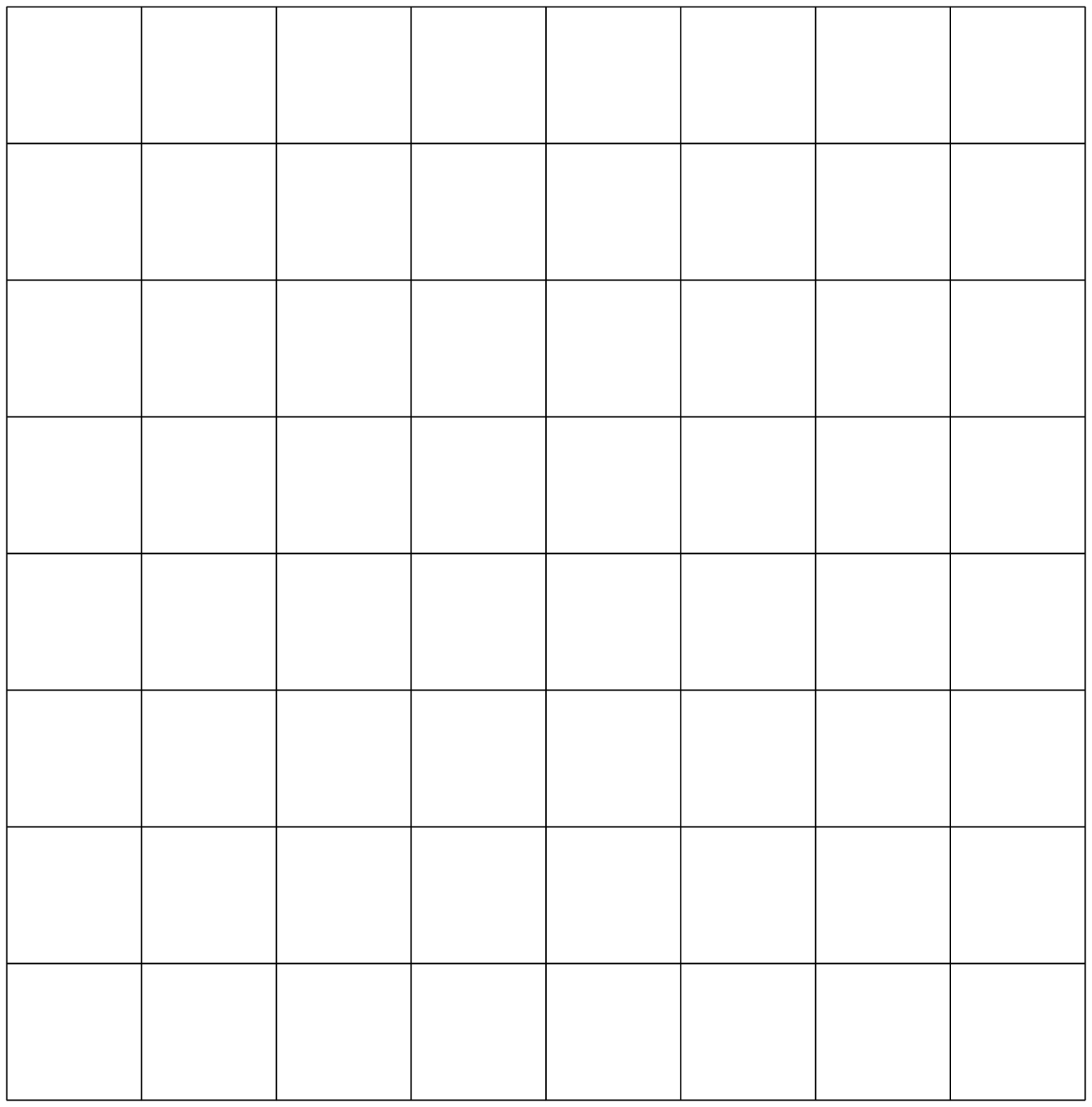}
                          \centering\includegraphics[height=3cm, width=3cm]{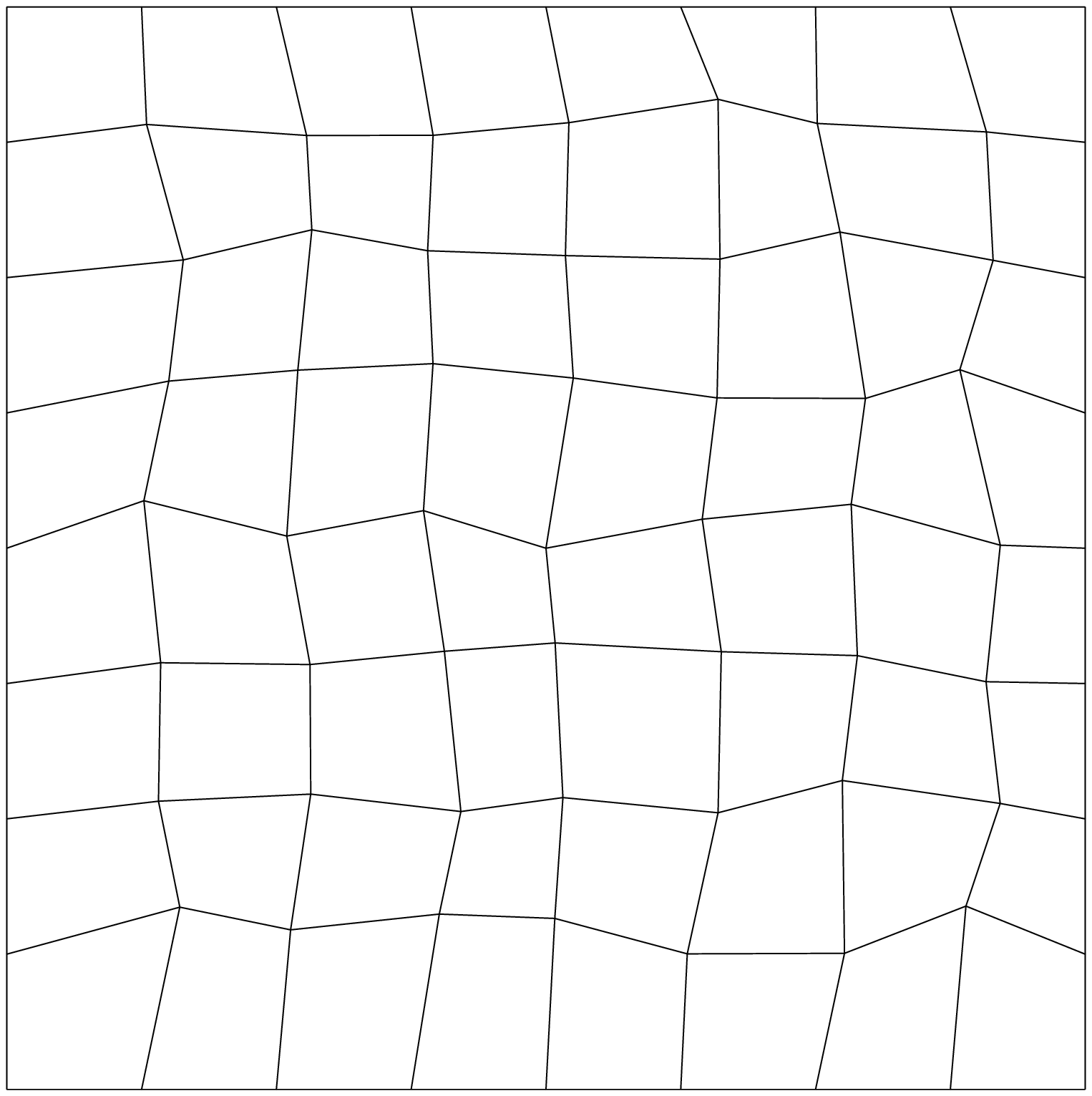}
                          \centering\includegraphics[height=3cm, width=3cm]{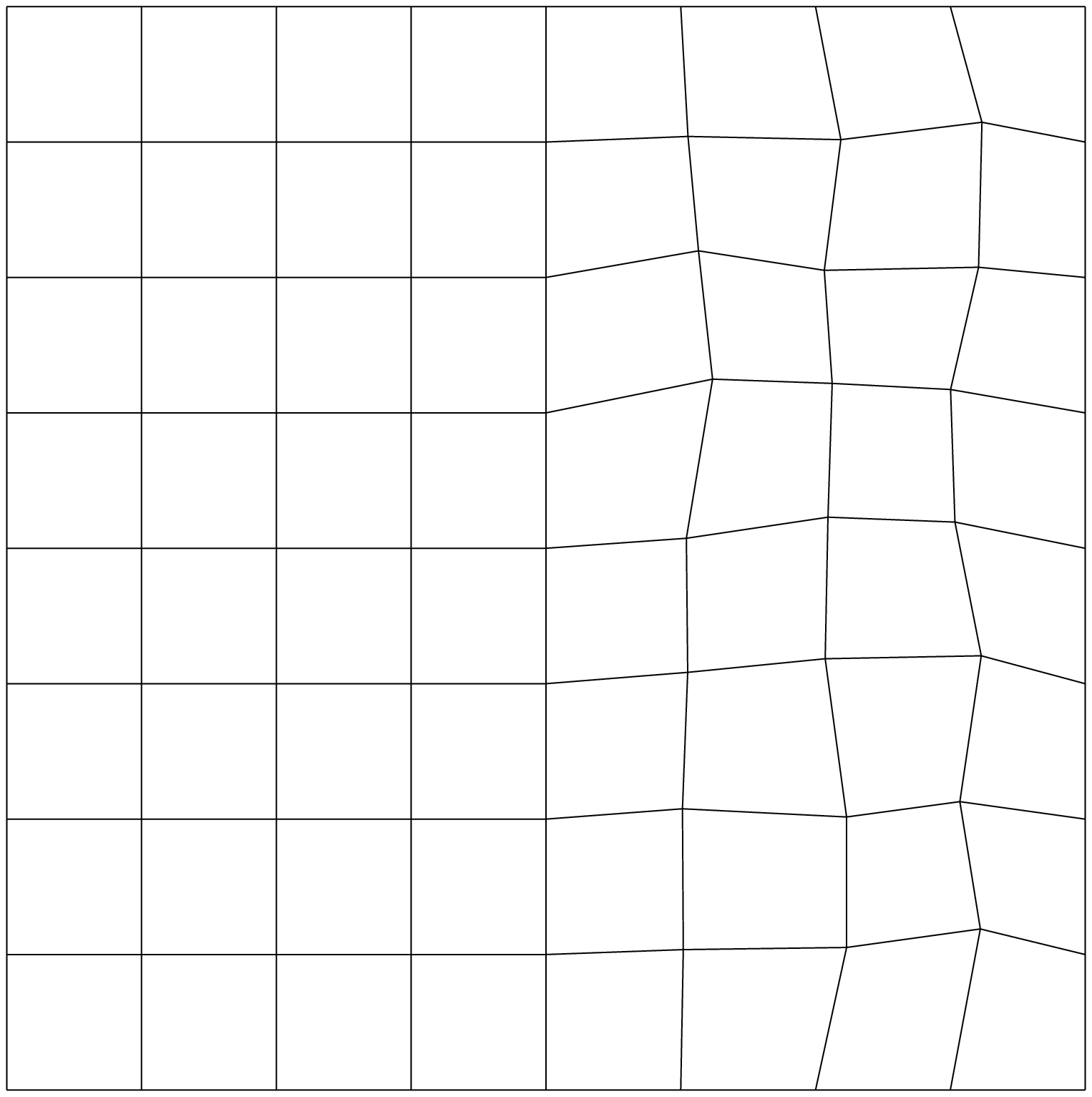}\\
                          \centering\includegraphics[height=3cm, width=3cm]{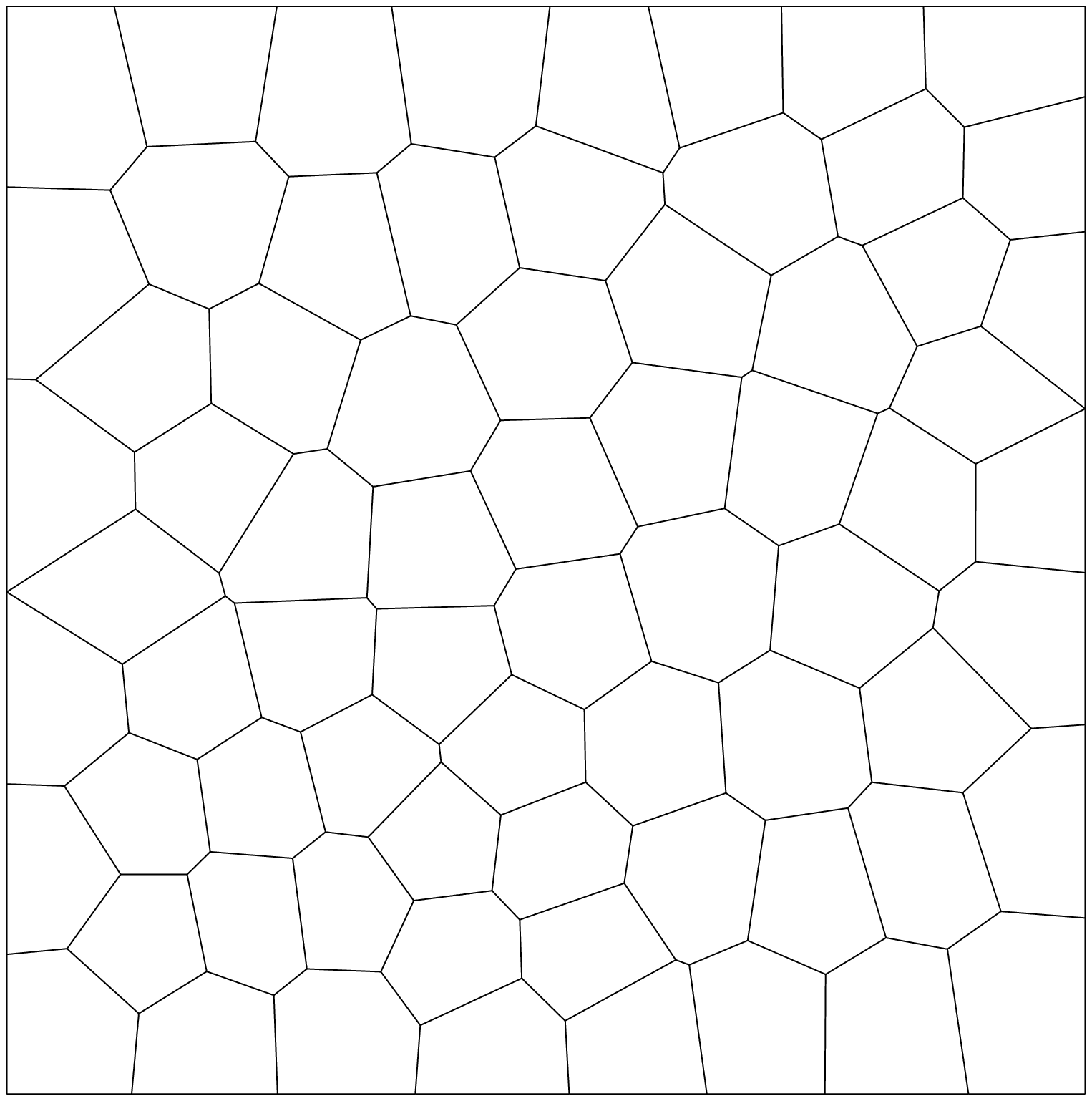}
                          \centering\includegraphics[height=3cm, width=3cm]{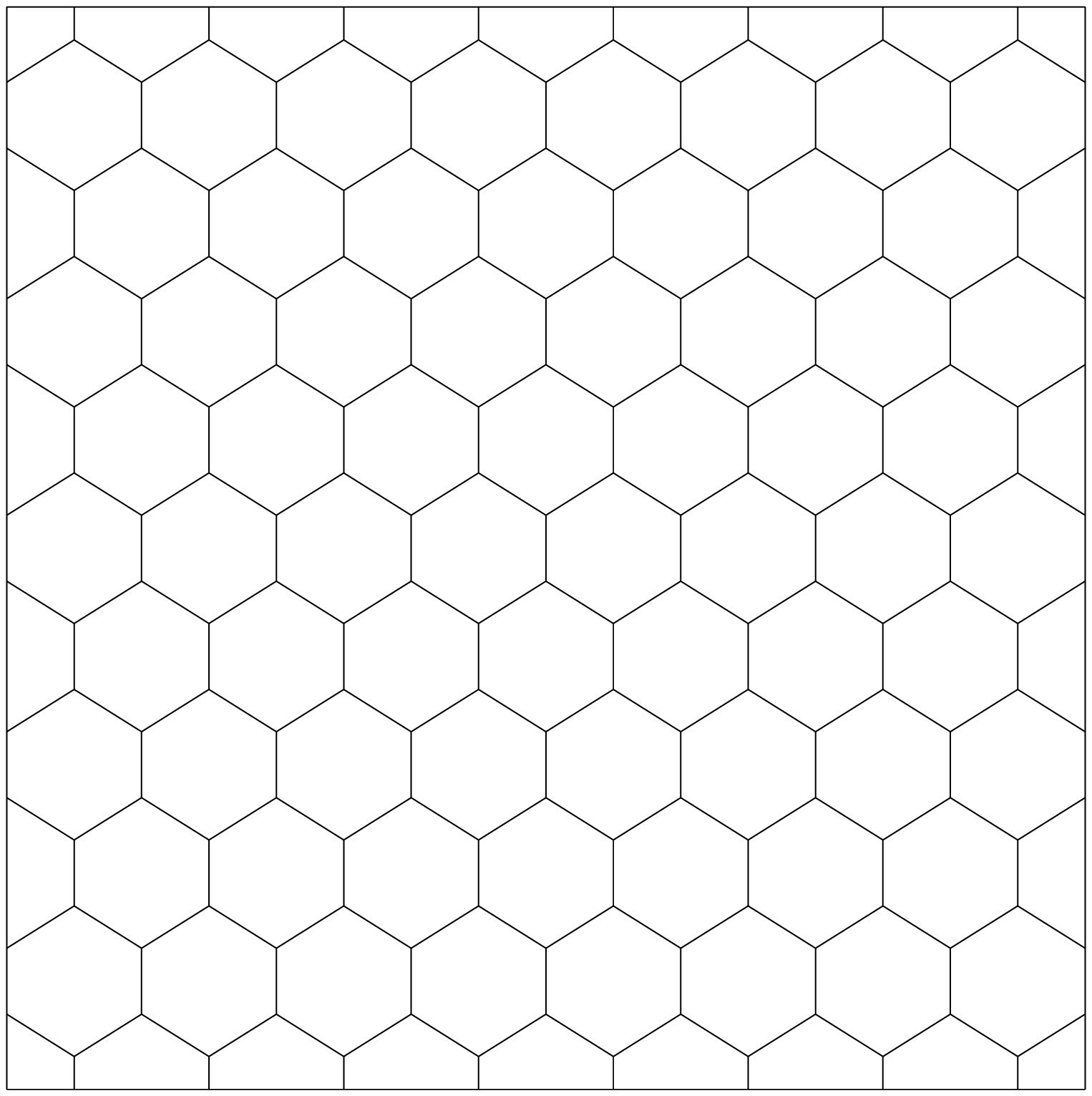}
                          \centering\includegraphics[height=3cm, width=3cm]{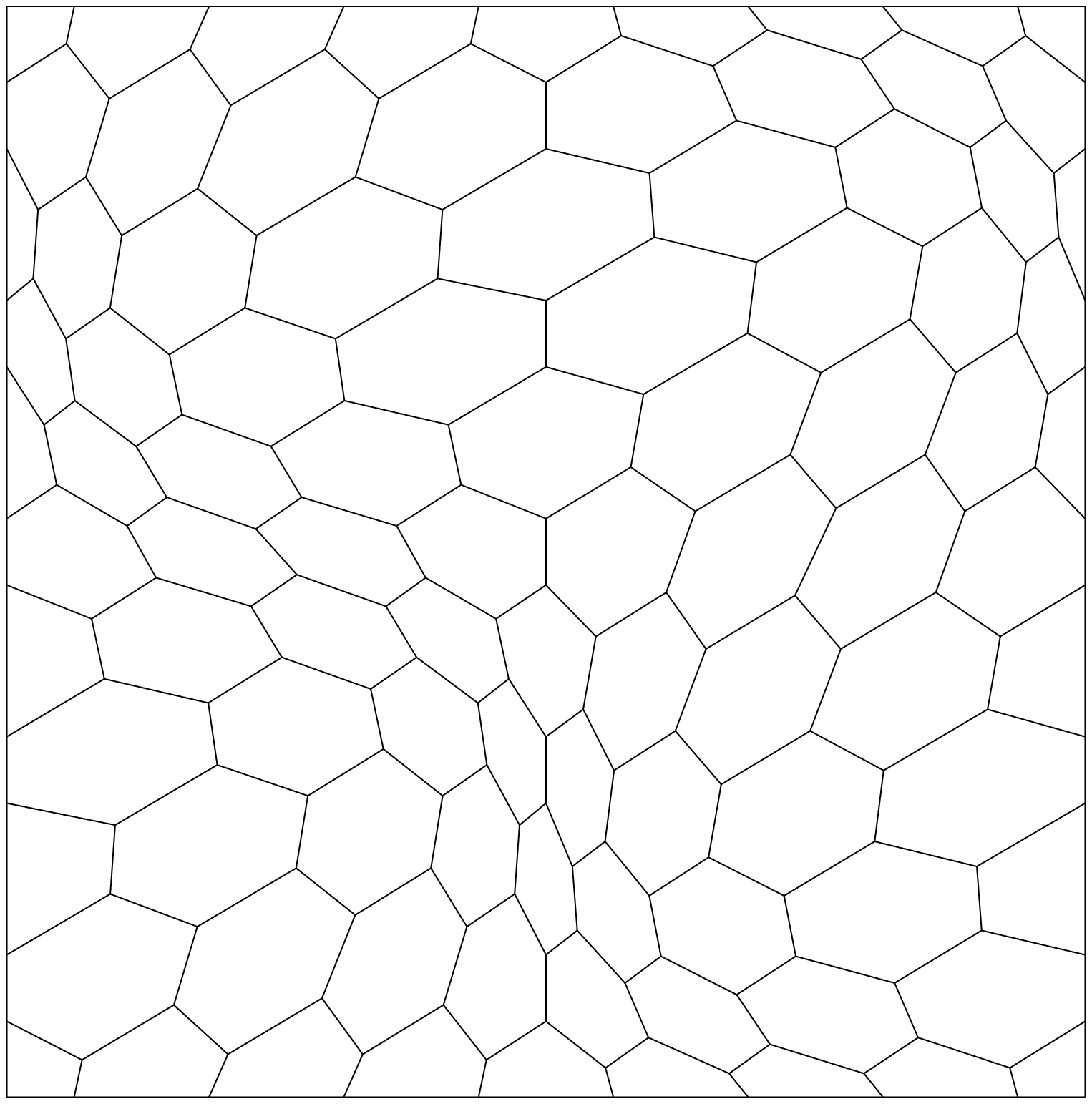}
                   \end{minipage}
		\caption{Sample meshes. From top left to bottom right: $\CT_h^1$, $\CT_h^2$, $\CT_h^3$, $\CT_h^4$, $\CT_h^5$, $\CT_h^6$ y $\CT_h^7$ respectively, with $N=8.$ 
}
		\label{FIG:meshes}
	\end{center}
\end{figure}
%We observe that $\CT_h^1$ is the classic mesh of triangles used in FEM. On the other hand, $\CT_h^2$, $\CT_h^3$ and $\CT_h^4$ are cuadrilateral meshes where we consider, respectively, symmetric squares, deformed quadrilaterals and quadrilaterals that violates the symmetry with respect to the $y$ axis. 
\subsection{Unit square domain with mixed boundary conditions.} We begin with the unit square $\O:=(0,1)^2$ as computational domain.
For this test, we consider the mixed boundary conditions of problem \eqref{system2}. More precisely, we will fix only the bottom of the square  which corresponds to the side with extreme points $(0,0)$ and $(1,0)$. 

In Table \ref{TABLA:1} we report the first six computed eigenvalues with our method. 

\begin{table}[H]
\begin{center}
\caption{Test 1. Computed lowest eigenvalues $\l_{h}^{(i)}$, $1\le
i\le6$, on different meshes.}
\begin{tabular}{|c|c|cccc|c|c|c| c}
\hline
$\CT_h$   & $\l_{h}^{(i)}$ & $N=30$ & $N=40$ & $N=50$ & $N=60$ & 
Order & Extr.&\cite{MMR3} \\
\hline
          & $\l_{h}^{(1)}$ &2.4668 &  2.4671 &   2.4672  &  2.4673 &   2.05 &   2.4674         &2.4674\\
$\CT_h^1$& $\l_{h}^{(2)}$ & 6.2673 &  6.2726 &   6.2751 &   6.2763   & 2.09 &   6.2791&6.2799  \\
          & $\l_{h}^{(3)}$&15.1721  & 15.1881 &  15.1958 &  15.1998   & 1.94  & 15.2096  &15.2090\\
          & $\l_{h}^{(4)}$&22.1607 &  22.1806  & 22.1899 &  22.1949  &  1.99  & 22.2064  &22.2065\\
         & $\l_{h}^{(5)}$&26.8589  & 26.8963  & 26.9158  & 26.9253 &   1.80  & 26.9525   &26.9479\\
& $\l_{h}^{(6)}$&42.9514  & 43.0348 &  43.0726  & 43.0933 &   2.05  & 43.1384          &43.1419\\
   \hline
          & $\l_{h}^{(1)}$ &  2.4651   & 2.4661 &   2.4666 &   2.4668 &   2.12  &  2.4673&2.4674 \\
$\CT_h^{2}$& $\l_{h}^{(2)}$ & 6.2242   & 6.2474   & 6.2586   & 6.2647  &  1.88 &   6.2798&6.2799 \\
          & $\l_{h}^{(3)}$&15.1023  & 15.1464 &  15.1679  & 15.1800  &  1.81  & 15.2110&15.2090\\
          & $\l_{h}^{(4)}$&  22.0254  & 22.1043 &  22.1410 &  22.1610   & 1.98 &  22.2071&22.2065\\
         & $\l_{h}^{(5)}$  & 26.6118 &  26.7567 &  26.8247 &  26.8621  &  1.95 &  26.9494&26.9479\\
         & $\l_{h}^{(6)}$ & 42.5313  & 42.7929  & 42.9164  & 42.9842   & 1.93  & 43.1454&43.1419\\
\hline
  & $\l_{h}^{(1)}$ &2.4652  &  2.4662 &   2.4666  &  2.4669 &   1.97  &  2.4675&2.4674\\
   $\CT_h^{3}$& $\l_{h}^{(2)}$ &6.2236 &   6.2470 &   6.2585  &  6.2645  &  1.88  &  6.2799&6.2799\\
    & $\l_{h}^{(3)}$ & 15.0980 &  15.1435  & 15.1665  & 15.1786   & 1.79 &  15.2117&15.2090\\
    & $\l_{h}^{(4)}$ & 22.0324 &  22.1077  & 22.1433  & 22.1624   & 1.96 &  22.2075&22.2065\\
    & $\l_{h}^{(5)}$ & 26.6130 &  26.7571 &  26.8256 &  26.8621  &  1.95 &  26.9494&26.9479\\
    & $\l_{h}^{(6)}$ & 42.5402  & 42.7955 &  42.9189  & 42.9852  &  1.89 &  43.1503&43.1419\\
   \hline
 \end{tabular}
\label{TABLA:1}
\end{center}
\end{table}

Clearly from Table \ref{TABLA:1} we observe that the optimal quadratic  order of approximation of the eigenvalues is obtained, as is expected according to Theorem \ref{thm:double:order}. Moreover, the computed extrapolated values are close to those computed with the BDM elements in \cite{MMR3}.

\subsection{Rigid square domain.} 
In the following examples, we will consider $\bu=\boldsymbol{0}$ as boundary condition for the whole domain. This  leads to the fact that, for the implementation of the eigenvalue problem, the condition $\int_{\O}p_h=0$ must be incorporated in the matrix system as a Lagrange multiplier. Clearly this condition is equivalent to impose $\int_{\O}\tr(\bsig_h)=0$ and its computation is based in \eqref{eq:trace_comput}.

Indeed, the term $\int_E\tr(\boldsymbol{\bsig_h})$ is computable according to \eqref{eq:global_space} since, for  $\boldsymbol{\xi}\in\bcW_h^E$ we have
\begin{equation}
\label{eq:trace_comput}
\ds\int_E\tr(\boldsymbol{\bxi})=\int_E\boldsymbol{\xi}:\mathbb{I}=\int_E\boldsymbol{\xi}:\nabla\boldsymbol{x}=-\int_E\boldsymbol{x}\cdot\bdiv\boldsymbol{\xi}+\int_{\partial E}\boldsymbol{\xi}\boldsymbol{n}\cdot\boldsymbol{x},
\end{equation}
where $\boldsymbol{x}\in\mathbf{P}_k(E)$.

For this test we consider the square $\O:=(-1,1)^2$ as computational domain. As we claim above, the boundary condition in this test is $\bu=\0$ in the whole boundary. In Table \ref{TABLA:2} we present the obtained results with the VEM method.

%\vspace*{-\baselineskip}
\begin{table}[H]
\begin{center}
\caption{Test 2. Computed lowest eigenvalues $\l_{h}^{(i)}$, $1\le
i\le5$, on different meshes.}
\begin{tabular}{|c|c|cccc|c|c|c|}
\hline
$\CT_h$   & $\l_{h}^{(i)}$ & $N=30$ & $N=40$ & $N=50$ & $N=60$ &
Order & Extr. &\cite{lovadina}\\
\hline
          & $\l_{h}^{(1)}$ &13.0092  & 13.0435  & 13.0583 &  13.0669 &   2.12 &  13.0839 & 13.086 \\
$\CT_h^1$& $\l_{h}^{(2)}$ & 22.7920 &  22.8983 & 22.9456 &  22.9697 &   2.19  & 23.0198 & 23.031 \\
          & $\l_{h}^{(3)}$&22.7961 &  22.8985  & 22.9457  & 22.9698 &   2.09 &  23.0234&23.031\\
          & $\l_{h}^{(4)}$&31.5769 &  31.7916 &  31.8819 &  31.9329  &  2.23  & 32.0281& 32.053\\
         & $\l_{h}^{(5)}$&37.8846  & 38.1650  & 38.2970 &  38.3681   & 1.96  & 38.5358& 38.532\\

\hline
          & $\l_{h}^{(1)}$ & 12.8975  & 12.9789  & 13.0171 &  13.0381   & 1.95 &  13.0872& 13.086 \\
$\CT_h^4$& $\l_{h}^{(2)}$ &22.2721&   22.5976 &  22.7517 &  22.8365 &   1.92 &  23.0393 & 23.031 \\
          & $\l_{h}^{(3)}$&22.2768  & 22.5996  & 22.7529  & 22.8371   & 1.91  & 23.0405 &23.031\\
          & $\l_{h}^{(4)}$&  30.8797 &  31.3801  & 31.6183 &  31.7492   & 1.91 &  32.0646 & 32.053\\
         & $\l_{h}^{(5)}$  & 36.2345  & 37.2064 &  37.6732  & 37.9316  &  1.87  & 38.5714& 38.532\\
\hline
& $\l_{h}^{(1)}$ &12.9192  & 12.9953 &  13.0273 &  13.0498 &   1.92 &  13.0953 &13.086 \\
$\CT_h^5$& $\l_{h}^{2}$  & 22.5009 &  22.7472 &  22.8523 &  22.9142 &   2.13 &  23.0347 & 23.031 \\
& $\l_{h}^{(3)}$ &   22.5136 &  22.7527 &  22.8601 &  22.9197 &   2.06 &  23.0472&23.031\\
& $\l_{h}^{(4)}$ &   31.0347 &  31.5018 &  31.6938 &  31.8194 &   2.10 &  32.0511& 32.053\\
& $\l_{h}^{(5)}$ &   37.1240 &  37.7922 &  38.0445 &  38.2329 &   2.16 &  38.5360&38.532\\
\hline
 \end{tabular}
\label{TABLA:2}
\end{center}
\end{table}

Once again, the quadratic order is obtained and the extrapolated values are close to those in \cite{lovadina}. We remark that in \cite{lovadina} the authors have 
considered the classic velocity-pressure formulation for the Stokes eigenvalue problem, which is clearly less expensive than the pseudostress formulation of \cite{MMR3}. However, since in our case we are not considering the mixed formulation, the $\texttt{eigs}$ solver of MATLAB works perfectly,
thanks to  the elliptic formulation. 
%Also in \cite{LM}, the DG method shows a very good accuracy for the approximation for the eigenvalues for this example with the pseudostress formulation. However, 
%our discrete formulation does not need additional terms to compensate the jumps of the normal component between two elements as it happens in the DG setting, since our method is $\mathbb{H}(\bdiv)$ conforming. 

In Figure \ref{FIG:pressureThT} we present plots for the first and fourth eigenfunctions. This plots show the magnitude of the velocity and the corresponding 
vector field. For the first eigenfunction we present plots obtained with a triangular mesh and for the fourth eigenfunction plots obtained with a Voronoi mesh.

\begin{figure}[H]
	\begin{center}
		\begin{minipage}{13cm}
			\centering\includegraphics[height=6cm, width=6cm]{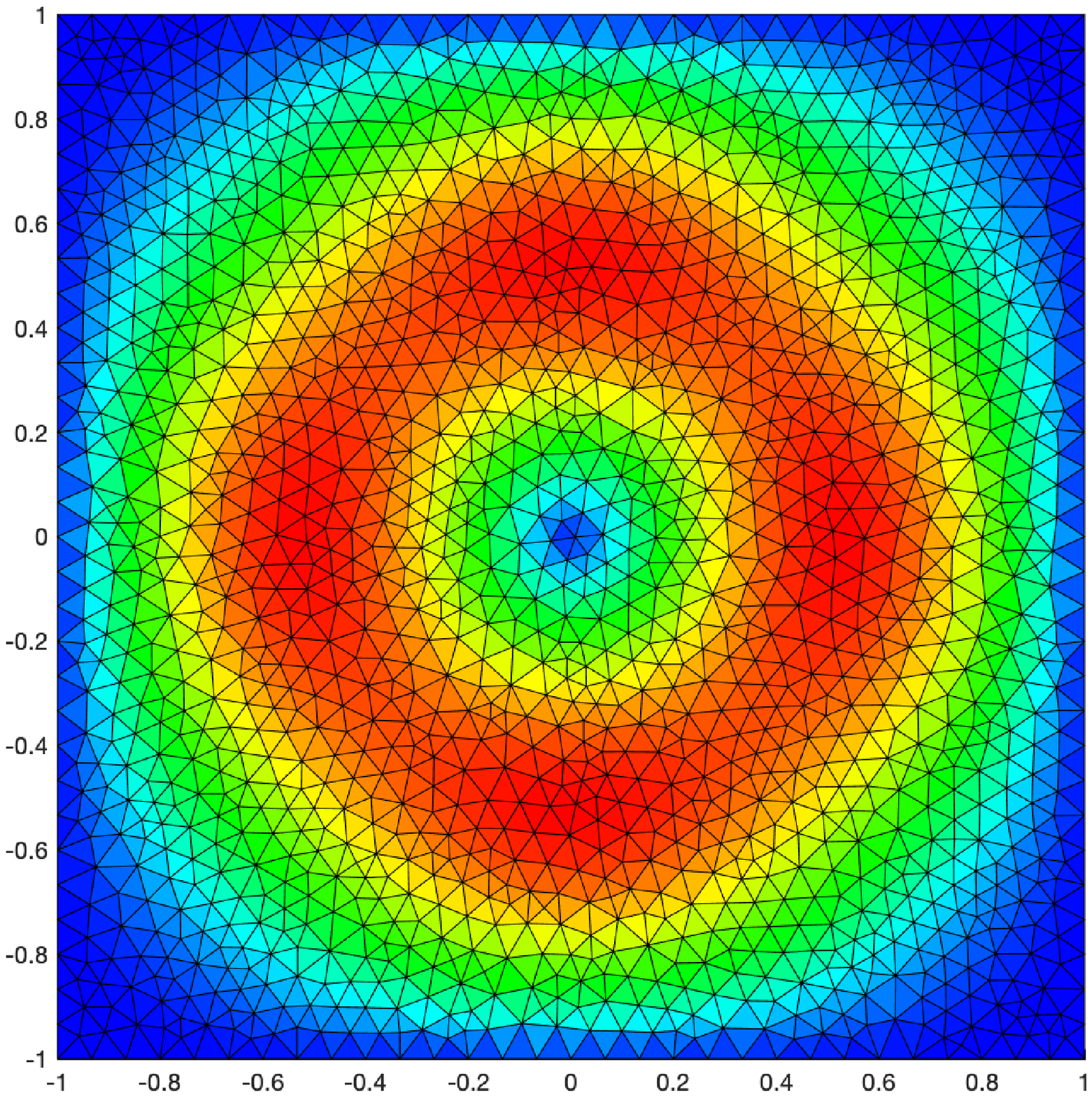}
			\centering\includegraphics[height=6cm, width=6cm]{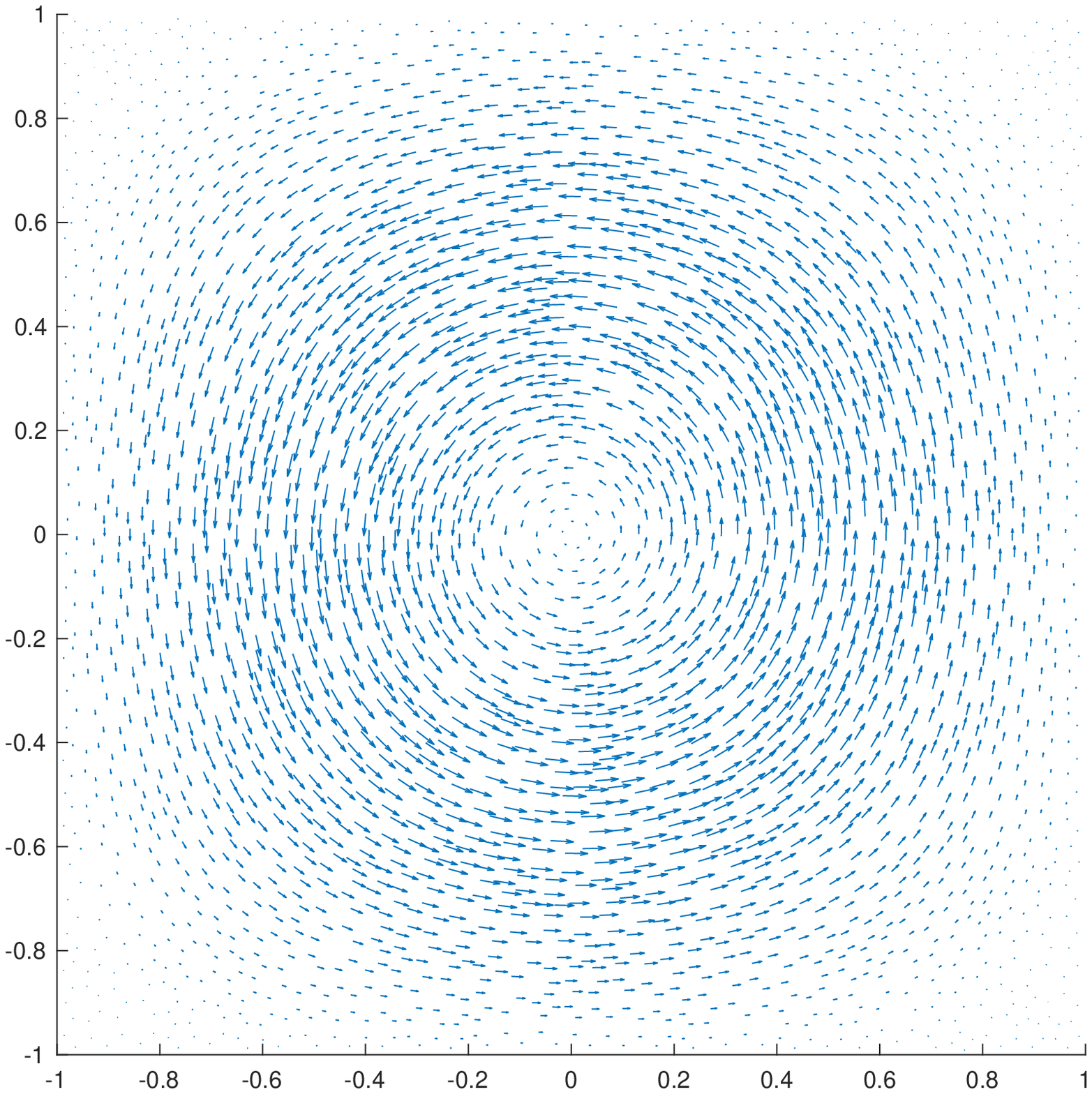}\\
                          \centering\includegraphics[height=6cm, width=6cm]{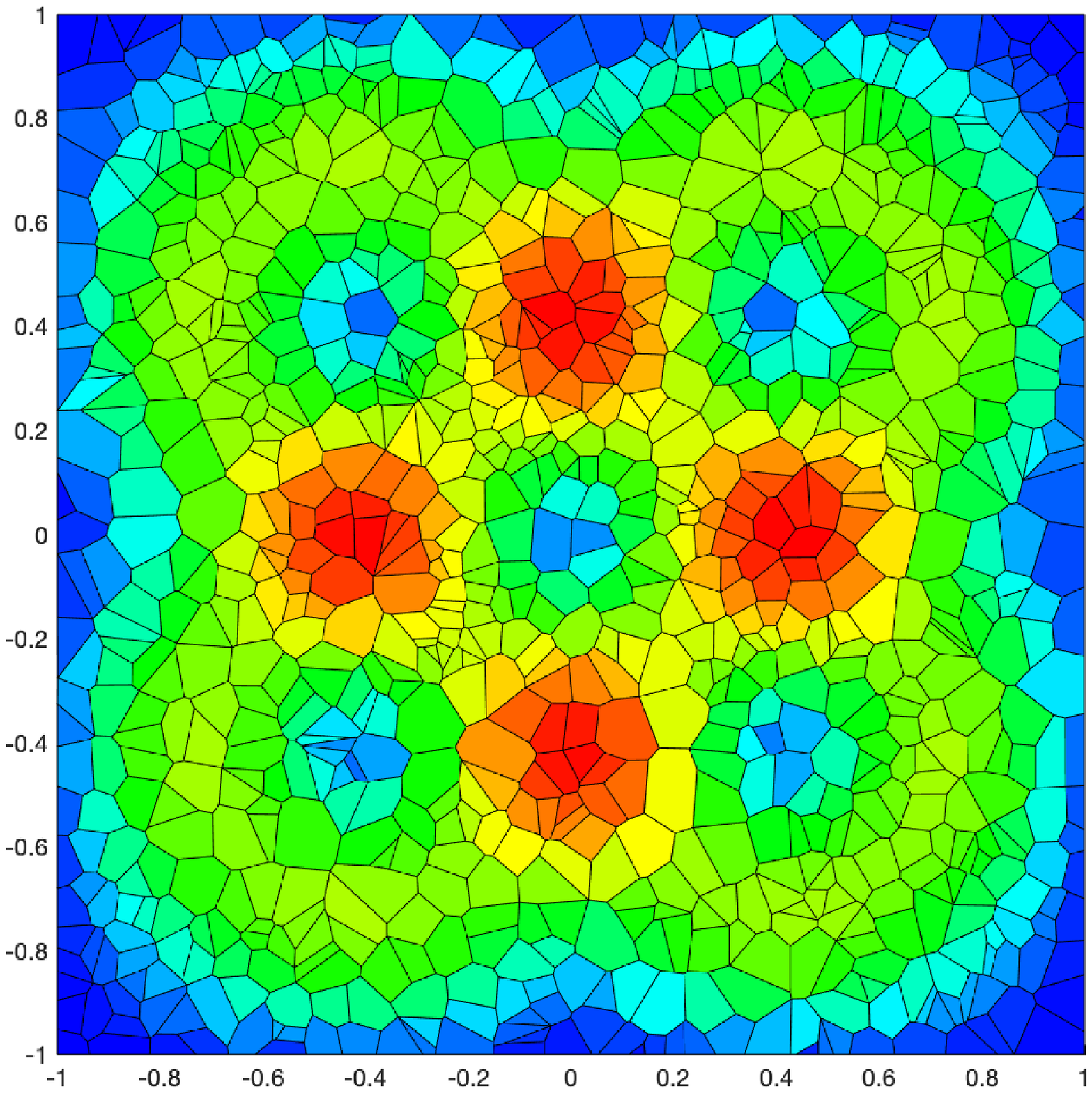}
                          \centering\includegraphics[height=6cm, width=6cm]{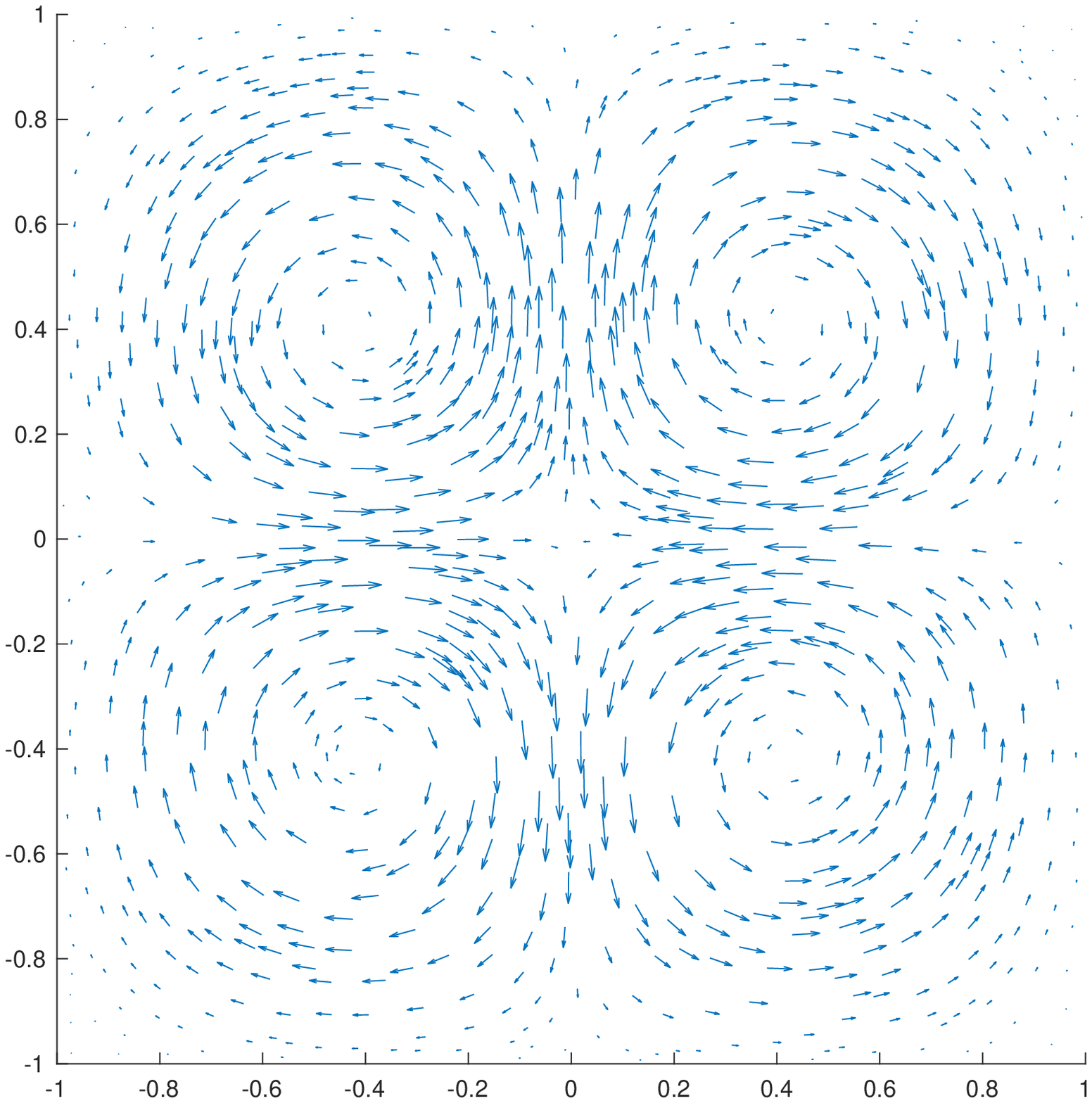}
                   \end{minipage}
		\caption{Eigenfunctions corresponding to the first and fourth lowest eigenvalues with $\CT_{h}^{1}$ and $\CT_{h}^{5}$: magnitude of  $\bu_{h}^{1}$ (upper left) velocity field of $ \bu_{h}^{1}$ (upper right),  magnitude of  $\bu_{h}^{4}$ (bottom left) and velocity field $ \bu_{h}^{4}$ (bottom right).
}
		\label{FIG:pressureThT}
	\end{center}
\end{figure}
\subsection{L-shaped domain.} In this test we consider a non-convex domain that we call the L-shaped domain, which is defined by $\O_{L}:=(-1,1)\times(-1,1)\backslash[-1,0]\times[-1,0]$.  In this case, the optimal order is not expectable for the eigenfunctions, due the presence of the singularity in $(0,0)$. In fact, the rate $r$ of convergence for the eigenvalues is such that $1.7\leq r\leq 2$, depending on the regularity of the eigenfunctions. In the following table we report the results for this configuration of the problem.

\begin{table}[H]
\begin{center}
\caption{Test 3. Computed lowest eigenvalues $\l_{h}^{(i)}$, $1\le
i\le5$, on different meshes.}
\begin{tabular}{|c|c|cccc|c|c|c|}
\hline
$\CT_h$   & $\l_{h}^{(i)}$  & $N=19$ & $N=27$ & $N=35$ & $N=45$ &
Order & Extr. &\cite{lovadina}\\
\hline
 & $\l_{h}^{(1)}$         & 31.1821 &  31.5813  & 31.7561 &  31.8593  &  1.76 &  32.0506 & 32.1734\\
 $\CT_h^6$& $\l_{h}^{(2)}$ & 36.2530 &  36.6458  & 36.7964 &  36.8751  &  2.19 &  36.9872& 37.0199\\
& $\l_{h}^{(3)}$   & 41.1780 &  41.5727  & 41.7223 &  41.8026  &  2.19 &  41.9146& 41.9443\\
& $\l_{h}^{(4)}$   & 47.9143 &  48.4647  & 48.6773 &  48.7953  &  2.11 &  48.9664& 48.9844\\
& $\l_{h}^{(5)}$   & 53.8827 &  54.6302  & 54.9298 &  55.1019  &  1.99 &  55.3698& 55.4365\\
& $\l_{h}^{(6)}$   & 67.1556 &  68.2905  & 68.7424 &  68.9892   & 2.06 &  69.3656& 69.5600\\
         \hline
& $\l_{h}^{(1)}$  & 31.0337  & 31.5027 &  31.7066 &  31.8259 &   1.78 &  32.0452& 32.1734\\
  $\CT_h^7$& $\l_{h}^{(2)}$  & 36.0658 &  36.5552 &  36.7432 &  36.8405 &   2.19 &  36.9804 & 37.0199\\
& $\l_{h}^{(3)}$   & 41.0445 &  41.5115 &  41.6874 &  41.7795 &   2.23 &  41.9064 & 41.9443\\
& $\l_{h}^{(4)}$   & 47.7733 &  48.3911 &  48.6319 &  48.7653 &   2.09 &  48.9622 & 48.9844\\
& $\l_{h}^{(5)}$   & 53.7141 &  54.5342 &  54.8676 &  55.0610 &   1.95 &  55.3695 & 55.4365\\
& $\l_{h}^{(6)}$   & 66.9808 &  68.1944 &  68.6820 &  68.9492 &   2.03 &  69.3668 & 69.5600\\
\hline
 \end{tabular}
\label{TABLA:4}
\end{center}
\end{table}

We observe that for the first eigenvalue,  the order of
approximation is not optimal. However, this order is the expected since the eigenfunctions associated 
to this eigenvalue are singular due the non convexity of the geometry at the point $(0,0)$, leading to
a lack of regularity of the eigenfunction and hence, a poorer convergence order. However, for the 
rest of the eigenvalues the approximation order is quadratic precisely because the associated eigenfunctions
to these eigenvalues are more regular. We remark that for other polygonal meshes the results are similar.

Finally, in Figure \ref{FIG:plotsL} we present plots of the magnitude and velocity fields for the first  
and  second eigenfunctions, obtained with hexagonal and deformed hexagonal meshes, respectively.
\begin{figure}[H]
	\begin{center}
		\begin{minipage}{13cm}
			\centering\includegraphics[height=6cm, width=6cm]{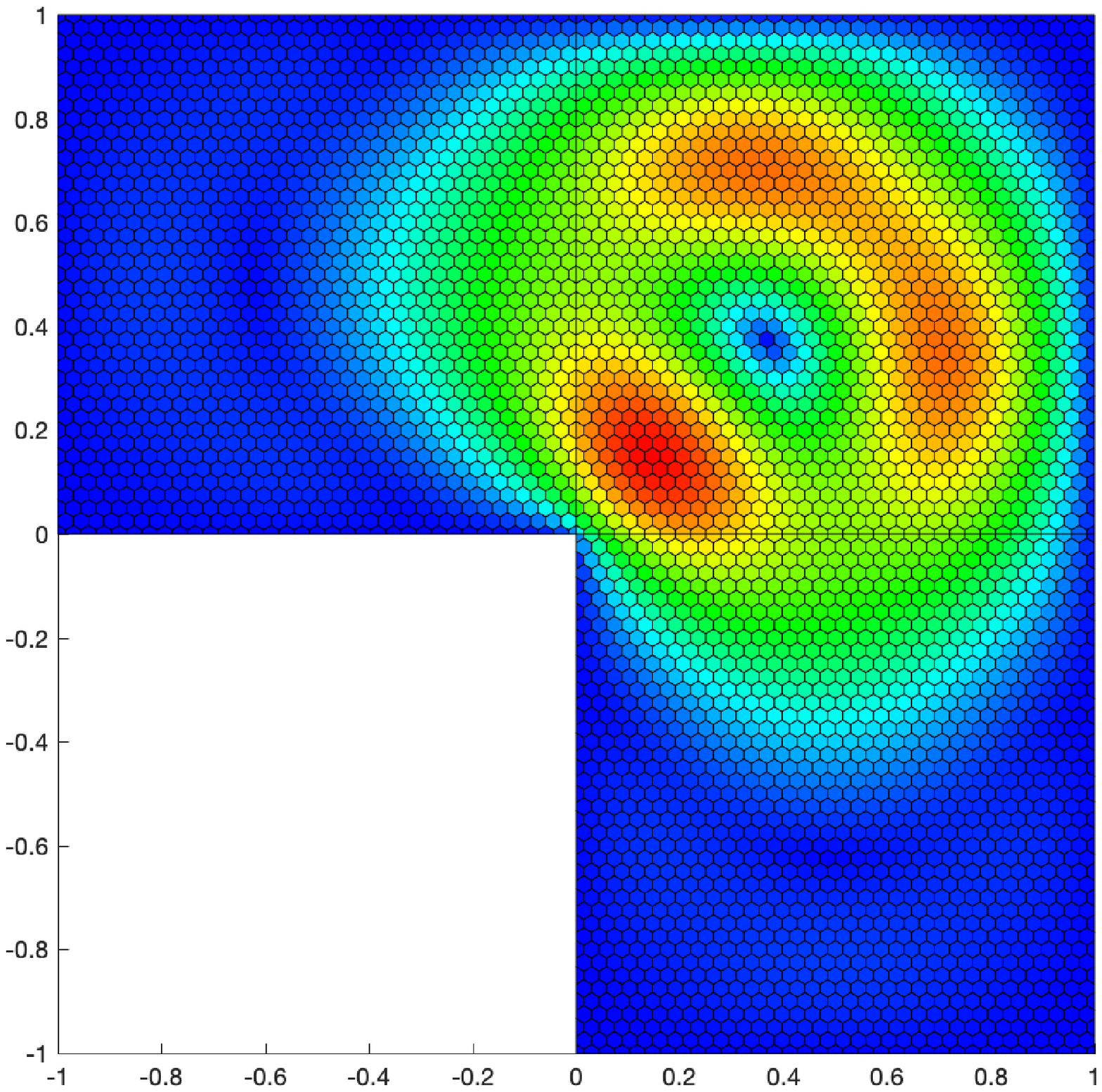}
			\centering\includegraphics[height=6cm, width=6cm]{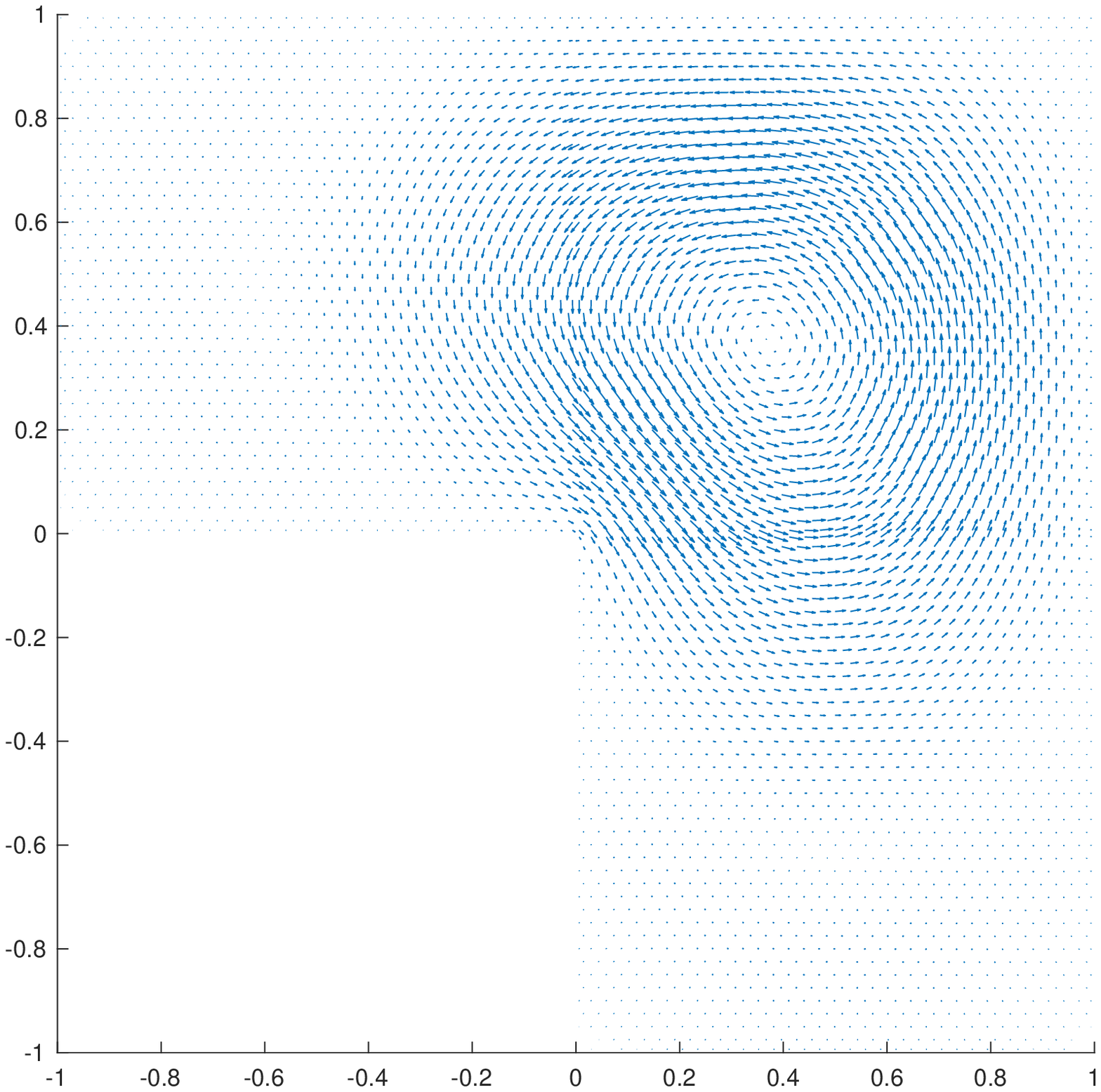}\\
                          \centering\includegraphics[height=6cm, width=6cm]{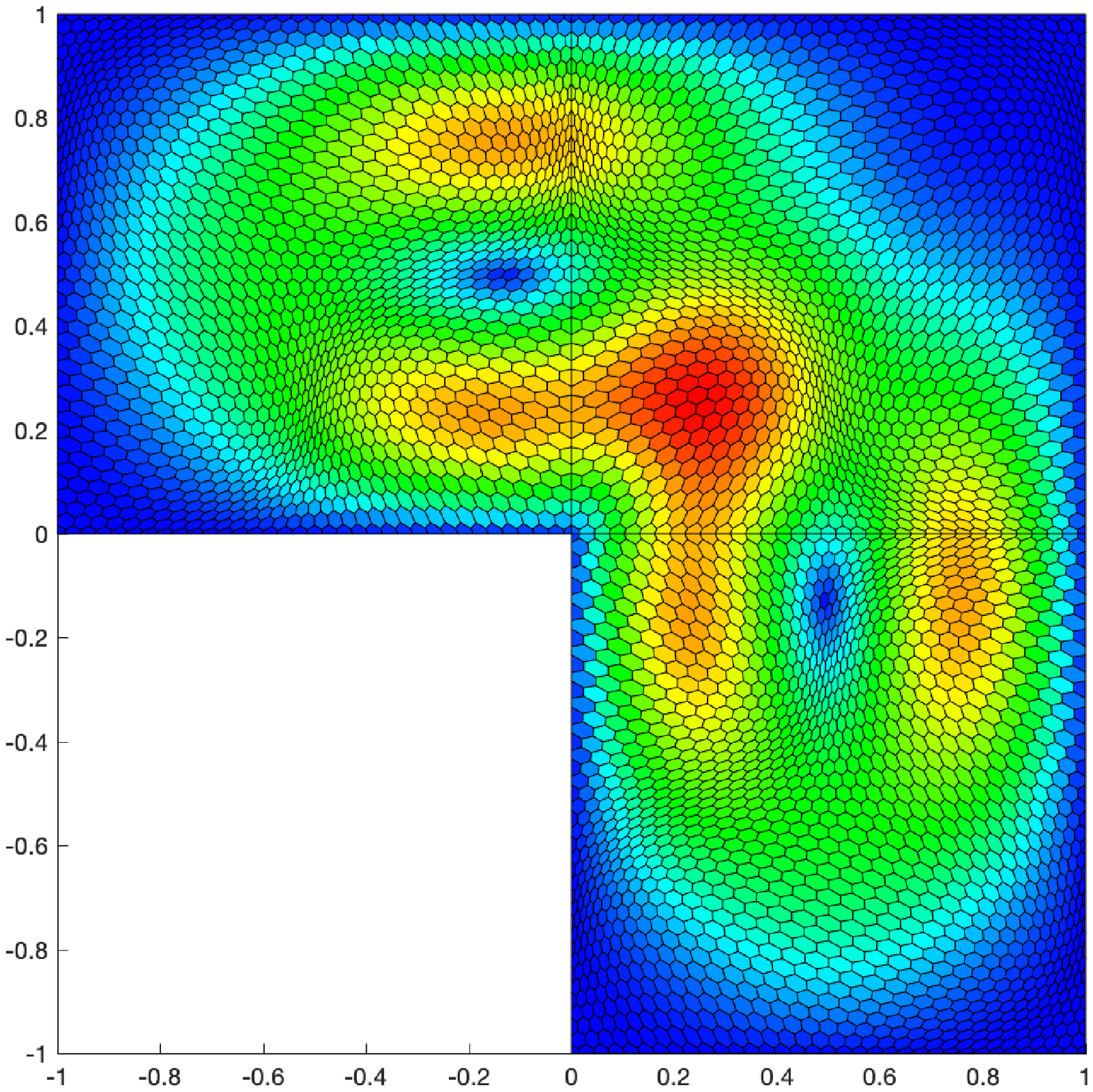}
                          \centering\includegraphics[height=6cm, width=6cm]{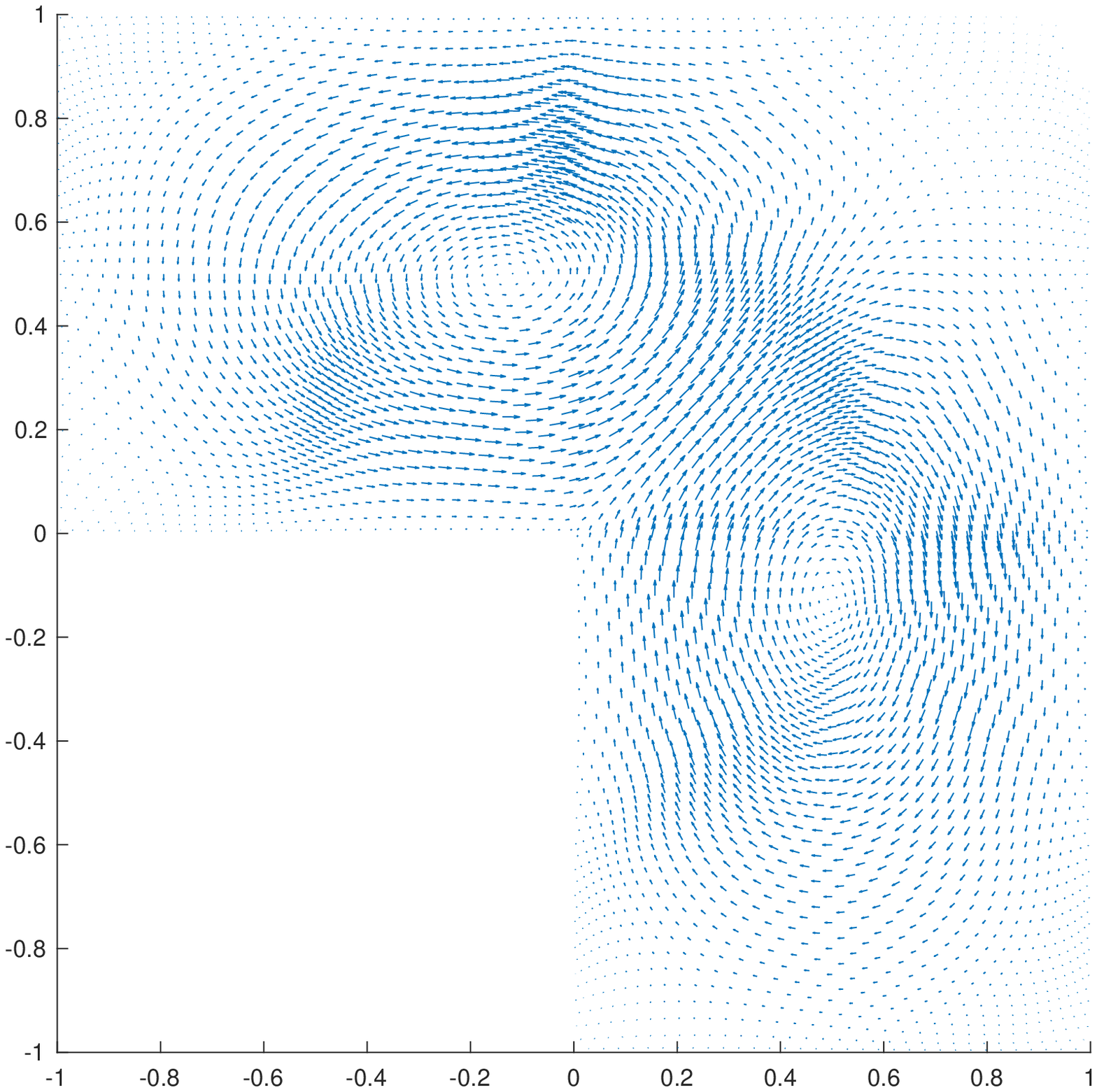}
                   \end{minipage}
		\caption{Eigenfunctions corresponding to the first and second  lowest eigenvalues with $\CT_{h}^{6}$ and $\CT_{h}^{7}$: magnitude of  $\bu_{h}^{1}$ (top left), velocity field of $ \bu_{h}^{1}$  (top right),  magnitude of  $\bu_{h}^{2}$ (bottom left), velocity field of $ \bu_{h}^{2}$ (bottom right).
}
		\label{FIG:plotsL}
	\end{center}
\end{figure}
%%%%%%%%%%%%%%%%%%%%%%%%%%%
\section{Aknowledgments}

The authors are deeply grateful to Prof. Rodolfo Rodr\'iguez (Universidad de Concepci\'on, Chile)
for the comments and observations which improved the manuscript.

\bibliographystyle{siam} %siam, abbrv, ieeetr, unsrt, acm
\bibliography{references}
\bibliographystyle{plain}

\end{document}